\Crefname{algocf}{Algorithm}{Algorithms}
\DeclareMathSymbol{\minus}{\mathord}{operators}{"2D}    
\newcommand\Ms{\ensuremath{\mathsf{M}}}
\newcommand\Ps{\ensuremath{\mathsf{P}}}
\newcommand\tP{\ensuremath{\theta^{\Ps}}}
\newcommand\Pn{\ensuremath{\mathcal{P}_{n}}}
\newcommand\ol[1]{\ensuremath{\overline{#1}}}
\newcommand\ie{i.\,e.}
\newcommand\eg{e.\,g.}
\newcommand{\Mc}[1]{\ensuremath{\mathcal{#1}}}
\newcommand{\Mods}[2][]{\ensuremath{\mathds{M}_{#1}(#2)}}
\newcommand{\inv}[2]{\ensuremath{[#1 \colon\! #2]}}
\newcommand{\dis}{\ensuremath{\colon\!}}
\newcommand{\Mmax}{\ensuremath{\mathbb{P}_{\max}}}
\definecolor{myred}{RGB}{212,76,76}
\definecolor{mygreen}{RGB}{49,163,84}
\definecolor{myblue}{RGB}{76,155,212}
\definecolor{myyellow}{rgb}{1.0, 0.75, 0.0}
\definecolor{olive}{rgb}{0.5,0.5,0.0}
\newcommand{\Lang}[1]{%
  \ifmmode{%
    \text{\normalfont\textsc{#1}}%
  }%
  \else
  {\normalfont\textsc{#1}}%
  \fi}
\DeclareFontFamily{U}{mathb}{\hyphenchar\font45}
\DeclareFontShape{U}{mathb}{m}{n}{
<-6> mathb5 <6-7> mathb6 <7-8> mathb7
<8-9> mathb8 <9-10> mathb9
<10-12> mathb10 <12-> mathb12
}{}
\DeclareSymbolFont{mathb}{U}{mathb}{m}{n}
\DeclareMathSymbol{\llcurly}{\mathrel}{mathb}{"CE}
\DeclareMathSymbol{\ggcurly}{\mathrel}{mathb}{"CF}
\newcommand{\precdot}{\prec}
\newtheorem{theorem}{Theorem}[section]
\newtheorem{definition}[theorem]{Definition}
\newtheorem{lemma}[theorem]{Lemma}
\newtheorem{corollary}[theorem]{Corollary}
\newtheorem{proposition}[theorem]{Proposition}
\newtheorem{remark}{Remark}
\newtheorem{example}{Example}
\newcommand{\tri}[1][0.04]{%
	\begin{tikzpicture}[baseline=-0.5ex,scale=#1]
	\fill[green] (90:4) -- (210:4) -- (-30:4) -- cycle;
	\fill[blue,path fading=west] (90:4) -- (210:4) -- (-30:4) -- cycle;
	\fill[red,path fading=south] (90:4) -- (210:4) -- (-30:4) -- cycle;
	\end{tikzpicture}%
}
\providecommand{\keywords}[1]{\textbf{\textit{Keywords: }} #1}
\title{Complete Edge-Colored Permutation Graphs}
\author[1,*]{Tom Hartmann}
\author[2]{Max Bannach}
\author[3]{Martin Middendorf}
\author[1,4-8]{Peter F. Stadler}    
\author[3]{Nicolas~Wieseke}
\author[9,*]{Marc Hellmuth} 
\affil[1]{Bioinformatics Group, Department of Computer Science \&
    Interdisciplinary Center for Bioinformatics, Universit{\"a}t Leipzig,
    H{\"a}rtelstra{\ss}e~16--18, D-04107 Leipzig, Germany.}
\affil[2]{Institute for Theoretical Computer Science,
    Universit\"at zu L\"ubeck, Ratzeburger Allee~160, D-23562 L\"ubeck,
    Germany.}
\affil[3]{Swarm Intelligence and Complex Systems Group, Faculty of
    Mathematics and Computer Science, University of Leipzig, Augustusplatz
    10, D-04109 Leipzig, Germany.}
\affil[4]{German Centre for Integrative Biodiversity Research
  (iDiv) Halle-Jena-Leipzig, Competence Center for Scalable Data Services
  and Solutions Dresden-Leipzig, Leipzig Research Center for Civilization
  Diseases, and Centre for Biotechnology and Biomedicine at Leipzig
  University at Universit{\"a}t Leipzig}
\affil[5]{Max Planck Institute for Mathematics in the Sciences,
  Inselstra{\ss}e 22, D-04103 Leipzig, Germany} 
\affil[6]{Institute for Theoretical Chemistry, University of Vienna,
  W{\"a}hringerstrasse 17, A-1090 Wien, Austria}
\affil[7]{Facultad de Ciencias, Universidad National de Colombia, Sede
  Bogot{\'a}, Colombia}
\affil[8]{Santa Fe Insitute, 1399 Hyde Park Rd., Santa Fe NM 87501,
  USA}
\affil[9]{School of Computing, University of Leeds, EC Stoner
  Building, Leeds LS2 9JT, UK}
\affil[*]{corresponding author}
\date{\ }
\begin{document}
\sloppy

\maketitle

\abstract{ 
    We introduce the concept of complete edge-colored permutation graphs
      as complete graphs that are the edge-disjoint union of ``classical''
      permutation graphs. We show that a graph $G=(V,E)$ is a complete
    edge-colored permutation graph if and only if each monochromatic
    subgraph of $G$ is a ``classical'' permutation graph and $G$ does not
    contain a triangle with~$3$ different colors.  Using the modular
    decomposition as a framework we demonstrate that complete edge-colored
    permutation graphs are characterized in terms of their strong prime
    modules, which induce also complete edge-colored permutation
    graphs. This leads to an $\Mc{O}(|V|^2)$-time recognition
    algorithm. We show, moreover, that complete edge-colored 
    permutation graphs form a superclass of so-called symbolic ultrametrics
    and that the coloring of such graphs is always a Gallai coloring.
}

\keywords{    Permutation graph; k-edge-coloring;  Modular  Decomposition; Symbolic ultrametric; Cograph; Gallai coloring 
}
%
\section{Introduction}
\label{sec:int}
Permutations model the rearrangement of an ordered sequence of
objects. Thus they play an important role in card
shuffling~\cite{Aldous1986}, comparative
genomics~\cite{Bhatia2018,Fertin2009,Hartmann2018}, and combinatorial
optimization \cite{Bona2016}. The effect of a permutation can be
illustrated in a \emph{permutation graph}~\cite{Pnueli:1971} that contains
the elements as vertices and that connects two vertices with an edge if the
corresponding elements are reversed by the permutation.
       
The graph class of permutation graphs is of considerable theoretical
interest~\cite{Brandstadt1999,Golumbic2004} because many computationally
intractable problems can be solved efficiently on permutation graphs. For
example, \Lang{hamiltonian-cycle} (given a graph, is there a cycle
containing every vertex?)~\cite{Deogun1994}, \Lang{feedback-vertex-set}
(can we transform the graph into a forest by deleting at most $k$
vertices?)~\cite{Brandstadt1985}, or \Lang{maximum-independent-set} (can we
find an independent set of largest possible size for a given
graph?)~\cite{Kim:1990} can be solved efficiently on permutation graphs.
Furthermore, permutation graphs can be recognized in linear
time~\cite{Crespelle2010,Mcconnell1999} and several characterizations of
permutation graphs have been
established~\cite{Brandstadt1999,Gallai1967,Dushnik1941,BFR:72,mohring1985algorithmic}.

Multiple permutations over the same set of elements can be represented in a
\emph{single} edge-colored graph by taking the edge-union of the
corresponding permutation graphs and assigning to each edge a color that
uniquely identifies the underlying permutation. In particular, if the edge
sets of the underlying permutation graphs are disjoint, then, each edge
obtains a unique label (or ``color'') in the resulting graph.  For certain
sets of permutation graphs, this procedure ends up with a \emph{complete
  edge-colored graph}, a class of graphs that has received
  considerable attention as \emph{symmetric 2-structures}
  \cite{ER1:90,ER2:90,EHPR:96,ehrenfeucht1999theory,Hellmuth:16a}.

In this article we study the reverse direction: \emph{Given a complete
  edge-colored graph, does it represent a set of permutations?} In a graph
theoretic sense, we are interested in the structure of graphs that can be
generated as superpositions of permutation graphs. For instance, we may
ask: Do the induced subgraphs of such graphs have certain interesting
properties? Are there forbidden structures? Which graph classes are a
superset of such colored permutation graphs?

\paragraph{Contribution I: A Characterization of Complete Edge-Colored
  Permutation Graphs}
Our first contribution is a collection of characterizations of
\emph{complete edge-colored permutation graphs} (graph of the just sketched
kind; a formal definition is given in Section~\ref{sec:permG}). We first
show that these graphs are \emph{hereditary}, \ie, each induced subgraph
of a complete edge-colored permutation graph is again a complete
edge-colored permutation graph. 

Furthermore, we provide a characterization that is closely related to
Gallai colorings: A complete edge-colored graph is a complete edge-colored
permutation graph if and only if all its monochromatic subgraphs are 
``classical'' permutation graphs and it does not contain a \emph{rainbow
  triangle}, \ie, a triangle with three distinct edge colors.

Finally, we provide two characterizations in terms of the \emph{modular
  decomposition:} A complete edge-colored graph is a complete edge-colored
permutation graph if and only if the quotient graph of each strong module
is a complete edge-colored permutation graph.  Moreover, a complete
edge-colored graph is a complete edge-colored permutation graph if and only
if the quotient graph of each strong prime module is a complete
edge-colored permutation graph.

\paragraph{Contribution II: Recognition of Complete Edge-Colored
  Permutation Graphs}
We prove that complete edge-colored permutation graphs $G=(V,E)$ can be
recognized in $\Mc{O}(|V|^2)$-time and, in the affirmative case, the
underlying permutations can be constructed within the same time complexity.

\paragraph{Contribution III: Connection to other Graph Classes}
We provide a classification of the class of complete edge-colored
permutation graphs with respect to other graph classes. We show that the
coloring of each complete edge-colored permutation graph is a Gallai
coloring, \ie, it is a complete edge-colored graph that does not contain a
rainbow triangle.  Furthermore, we prove that every graph representation of
a \emph{symbolic ultrametric}, \ie, a graph that is based on sets of
certain symmetric binary relations, is also a complete edge-colored
permutation graph. Moreover, we show how symbolic ultrametrics, complete
edge-colored graphs, and so-called separable permutations are related.
        
\paragraph{Related Work}
Permutation graphs were already characterized in 1976 in terms of forbidden
subgraphs~\cite{Gallai1967} and in a number of other characterizations, see
for instance \cite{Brandstadt1999,Golumbic2004}. Although many
computationally intractable problems become tractable on permutation
graphs, this is not the case for some coloring problems such as the problem
to determine the achromatic number~\cite{Bodlaender1989} or the cochromatic
number~\cite{Wagner1984}. There are also many problems for which the
complexity on permutations graphs is still unknown, for instance
determining the edge search number~\cite{Golovach2012}.  A general overview
on permutation graphs can be found in \cite{Brandstadt1999,Golumbic2004}.
        
The idea to investigate graphs whose edge set can be decomposed into a set
of permutation graphs is not new; in fact, not necessarily complete
versions of those graphs were studied by Golumbic \emph{et al.} in the
1980s \cite{golumbic1983comparability}.  It was shown that the complement
of such a graph is a comparability graph, \ie, a graph that corresponds to
a strict partially ordered set.  In addition, the authors investigated the
problem to find a minimum number of permutations whose edge-union forms a
given graph.  For more information on those graphs, the reader is referred
to \cite{Golumbic2004}.
        
The investigation of complete edge-colored graphs without rainbow triangles
also has a long history. A complete graph is said to admit a \emph{Gallai
  coloring} with $k$ colors, if its edges can be colored with $k$ colors
without creating a rainbow triangle~\cite{Gyarfas2004}. It is an active
field of research to study when such a coloring may
exist~\cite{Gyarfas2019,ball2007colored} and what properties such colorings
have~\cite{Balogh2018,Bastos2019,Bastos2018}.  For example, a Gallai
coloring of the $K_n$, \ie, the complete graph with $n\in \mathbb{N}$
vertices, contains at most $n-1$ colors \cite{Erdoes1973}.  Another
well-known property is that those graphs can be obtained by substituting
complete graphs with Gallai colorings into vertices of 2-edge-colored complete
graphs \cite{Gallai1967,Gyarfas2004,Cameron1997}.  A survey on complete
edge-colored graphs that admit a Gallai coloring can be found \eg\ in
\cite{Fujita2010,Kano2008,Gyarfas2011}.

\emph{Symbolic ultrametrics}~\cite{Bocker1998} are symmetric binary
relations that are closely related to vertex-colored trees.  They were
recently used in phylogenomics for the characterization of homology
relations between
genes~\cite{hellmuth2013orthology,hellmuth2015phylogenomics}.  The graph
representation of a symbolic ultrametric can be considered as an
edge-colored undirected graph in which each vertex corresponds to a leaf in
the corresponding vertex-colored tree.  Two vertices $x$ and $y$ of this
graph are connected by an edge of a specific color if the last common
ancestor of the corresponding leafs $x$ and $y$ in the tree has that
specific color.  Building upon the results
of~\cite{Bocker1998,semplephylogenetics}, these graphs were studied
intensively in recent years, see for
instance~\cite{hellmuth2013orthology,hellmuth2015phylogenomics,hellmuth2016sequence,hellmuth2018tree,lafond2014orthology}
and references therein. They are of central interest in phylogenomics as
their topology can be represented by an event-annotated phylogenetic tree
of the given genetic sequences~\cite{Bocker1998,semplephylogenetics}.  It
has recently been shown that the graph representation of a symbolic
ultrametric is a generalization of
\emph{cographs}~\cite{hellmuth2013orthology}, which are graphs that do not
contain an induced path on four vertices~\cite{CORNEIL1981163}. In
addition, the authors have characterized these graphs in terms of forbidden
subgraphs.

Many results that are utilized in this contribution, \ie, the principles of
modular decomposition of complete edge-colored graphs, are based on the
theory of $2$-structures which was first introduced by Ehrenfeucht and
Rozenberg~\cite{ER1:90,ER2:90}.  The notion of a $2$-structure can be seen
as a generalization of the notion of a graph and, thus, it provides a
convenient framework for studying graphs.  In particular, $2$-structures
facilitate the deduction of strong decomposition theorems for graphs, since
they allow to represent graphs hierarchically as
trees~\cite{ehrenfeucht1990partial}.  For more information and surveys on
$2$-structures see \cite{EHPR:96,ehrenfeucht1999theory}.

For an overview on modular graph decompositions we refer to the survey by
Habib and Paul~\cite{Habib2010}, which summarizes the algorithmic ideas and
techniques that arose from these decompositions. Various results are known
that connect permutation graphs and modular decompositions, see for
instance~\cite{Crespelle2010,Bergeron2008,Capelle2002}.
        
\paragraph{Organization of this Contribution}
In Section~\ref{sec:prems} we provide formal definitions concerning
permutations, graphs, and their modular decomposition.  The class of
permutation graphs and
complete edge-colored permutation graphs are described in
Section~\ref{sec:permG} and characterized in Section~\ref{sec:chara}.  In
\Cref{sec:recognition} we show that complete edge-colored permutation
graphs can be recognized in $\Mc{O}(|V|^2)$-time.
Section~\ref{sec:graph_classes} investigates the connection between the
class of complete edge-colored permutation graphs, symbolic ultrametrics,
and Gallai colorings.  In Section~\ref{sec:conclusion} a summary of the
paper is given and avenues for future work are outlined.


\section{Preliminaries: Permutations, Graphs, Modular Decomposition}
\label{sec:prems}

In this section, we provide the necessary formal notion and respective
useful results of permutations, graphs, and modular graph decomposition.

\paragraph{\textbf{Permutations}}
In the following we write $\inv{1}{n}\coloneqq \{1,\dots,n\}$.  A
\emph{permutation} $\pi\colon \inv{1}{n}\to \inv{1}{n}$ of length
$n\in\mathbb{N}$ is a bijective map that assigns to each element
$i\in \inv{1}{n}$ a unique element $\pi(i)\in \inv{1}{n}$.  By slight abuse
of notation, we represent a permutation $\pi$ as the sequence
$(\pi(1),\dots ,\pi(n))$. We denote the \emph{length} of
$\pi = (\pi(1),\dots ,\pi(n))$ by $|\pi|=n$ and the set of all permutations
of length $n$ by~$\Pn$.

For every $\pi \in \Pn$ there is a unique permutation $\pi^{-1} \in \Pn$
that is called \emph{inverse permutation} (of $\pi$) defined by
$\pi^{-1}(j) = i$ if and only if $\pi(i)=j$.  We denote with $\ol{\pi}$ the
\emph{reversed} permutation in which the order of all elements with respect
to $\pi$ is reversed: $\ol{\pi}(i) = \pi(n+1-i)$ for all $i\in \inv{1}{n}$.
A \emph{subsequence} (of $\pi$) is a sequence $(\pi(i_1), \dots, \pi(i_k))$
with $1 \leq i_1 \leq \dots \leq i_k \leq n$.  The permutation
$\iota\coloneqq(1, \dots ,n)$ is called \emph{identity}.

\begin{example}
  Consider the permutation $\pi=(1,5,2,4,7,3,6)$ of length $7$. Its inverse
  permutation is $\pi^{-1}=(1,3,6,4,2,7,5)$ and its reversed permutation is
  $\ol{\pi}=(6,3,7,4,2,5,1)$.  The sequence $(1,2,4,7,6)$ is a subsequence
  of~$\pi$.
\end{example}

\paragraph{\textbf{Binary Relations and Strict Total Orders}}
A binary relation $R\subseteq V\times V$ on $V$ is a \emph{strict total
  order} on $V$ if

\begin{enumerate}
\item[(R1)] $R$ is trichotomous, \ie, for all $x,y\in V$ we have either
  $(x,y)\in R$, $(y,x)\in R$ or $x=y$ and
\item[(R2)] $R$ is transitive, \ie, for all $x,y,z\in V$ with
  $(x,y),(y,z)\in R$ it holds that $(x,z)\in R$.
\end{enumerate}
Note that (R1) is equivalent to claiming that $R$ is irreflexive,
asymmetric, and that all distinct elements $x,y\in V$ are in relation $R$.

\paragraph{\textbf{Graphs and Colorings}}
An \emph{(undirected) graph} $G$ is an ordered pair $(V,E)$ consisting of a
non-empty, finite set of \emph{vertices}~$V(G)=V$ and a set of \emph{edges}
$E(G)=E\subseteq \binom{V}{2}$, where $\binom{V}{2}$ denotes the set of all
2-element subsets of $V$. Observe that this definition explicitly excludes
self loops and parallel edges.  If $G=(V,\binom{V}{2})$, then $G$ is called
\emph{complete} and denoted by $K_{|V|}$.

For an edge $e=\{u, v\}$ of $G$, the vertices $u$ and $v$ are called
\emph{adjacent} and $e$ is said to be \emph{incident} with $u$ and $v$.
Let $v \in V$ be a vertex of $G$.  The \emph{degree} of $v$ is the number
of edges in $G$ that are incident with $v$.  The \emph{complement} of $G$
is the graph $\ol{G}$ with vertex set $V$ and edge set
$\binom{V(G)}{2} \setminus E$.  A sequence of vertices
$S=(v_1, \dots, v_m)$ of $G$ is called a \emph{walk} (in $G$) if
$\{v_i,v_{i+1}\}\in E$ for all $i\in\inv{1}{m-1}$. A walk is called a
\emph{path} if it does not contain a vertex more than once.
A graph $G$ is \emph{connected} if for any two vertices $u,v\in V$ there
exists a walk from $u$ to $v$, otherwise $G$ is called \emph{disconnected}.
		
A graph $H=(W,F)$ is a \emph{subgraph} of $G=(V,E)$ if $W\subseteq V$ and
$F\subseteq E$.  A subgraph $H=(W,F)$ (of $G$) is called \emph{induced},
denoted by $G[W]$, if for all $u,v\in W$ it holds that $\{u,v\} \in E$
implies $\{u,v\} \in F$.
A \emph{connected component} of a graph is a connected subgraph that is
maximal with respect to inclusion.  A subset of vertices $V$ of a graph $G$
is an \emph{independent set} (resp.\ a \emph{clique}) if for all distinct
$u,v\in V$ it holds that $\{u,v\} \notin E$ (resp.\ $\{u,v\} \in E$).
		
A \emph{tree} $T=(V,E)$ is a graph in which any two vertices are connected
by exactly one path.  A \emph{rooted tree} $T$ is a tree with one
distinguished vertex $\rho \in V$ called the \emph{root}.  The
\emph{leaves} $L \subseteq V$ of $T$ is the set of all vertices that are
distinct from the root and have degree 1.  Let $T=(V,E)$ be a rooted tree
with leaf set $L$ and root $\rho$. Then, vertex $v$ is called a
\emph{descendant} of vertex $u$ and $u$ an \emph{ancestor} of $v$
if $u$ lies on the unique path from $v$ to the root $\rho$.  
The \emph{children} of an inner vertex $v$ are its direct descendants, \ie,
vertices~$w$ with $\{v,w\} \in E$ and $w$ is a descendant of $v$.  
In this case, the vertex $v$ is called the \emph{parent} of $w$.

\begin{definition}[$k$-edge-colored graph] 
  A \emph{$k$-edge-colored} graph is a graph $G=(V,E)$ together with a
  surjective map $c\colon E\to \inv{1}{k}$ assigning a unique color~$c(e)$
  to \emph{each} edge $e\in E$.
\end{definition}

The map~$c$ is also called $k$-edge-coloring of the edges of~$G$.
A $k$-edge-colored graph is also called \emph{edge-colored} graph if the
number~$k$ of colors remains unspecified.  By slight abuse of 
notation, we also call graphs $G=(V,E)$ with $E=\emptyset$ \emph{edge-colored
graphs}. Thus, the graph $K_1=(\{v\},\emptyset)$ is an edge-colored graph. 
Using this notation, every graph $G=(V,E)$  
	can be considered as a 1-edge-colored graph.

Given a $k$-edge-colored graph with non-empty edge set and a
map~$c$, the edge set $E$ can be partitioned into~$k$ non-empty subsets
$E_1,\dots E_k$, where each $E_i$ contains all edges $e$ with $c(e)=i$. We
call such a partition of $E$ into the sets $E_1,\dots,E_k$ an
\emph{(edge-)coloring} of $G=(V,E)$ and write $(V,E_1,\dots ,E_k)$ instead
of $(V,E)$.  

For a subgraph $H$ of a $k$-edge-colored graph $G$, we always
assume that each edge $e$ in~$H$ retains the color~$c(e)$ that it has
in~$G$, making $H$ a $k'$-edge-colored graph with $k'\leq k$.  The \emph{$i$-th
  monochromatic} subgraph of a $k$-edge-colored graph $G$ is the subgraph
$G_{|i}\coloneqq(V,E_i)$, where $E_i$ contains all edges with color~$i$.

\begin{remark}
  Many of the $k$-edge-colored graphs considered here are complete graphs.
  Note, this is not a restriction in general, since each $k$-edge-colored graph
  can be transformed into a complete ($k+1$)-edge-colored graph by
  assigning a new color $k+1$ to all non-edges.
\end{remark}

\begin{definition}[Rainbow triangle]\label{def:rainbowT}
  Let $G=(V,E_1,\dots ,E_k)$ be a $k$-edge-colored graph and
  $\{u,v,w\}\subseteq V$. The induced subgraph $G[\{u,v,w\}]$ is called a
  \emph{rainbow triangle}, denoted by $\tri_{uvw}$, if $\{u,v\}$,
  $\{u,w\}$, and $\{v,w\}$ have pairwise distinct colors.
\end{definition}
	
In the literature, various notions of rainbow triangles can be found.
Examples are the notions~\emph{multicolored triangles}
\cite{gyarfas2010gallai} and \emph{tricolored
  triangles}~\cite{Gyarfas2004}.

A quite useful property of complete $k$-edge-colored graphs without rainbow
triangles is summarized in the following
\begin{proposition}[\cite{Gyarfas2004}, Lemma A]
  \label{lem:gyarfas_disconnected}
  For every complete $k$-edge-colored graph $G$ with $k\in\mathbb{N}_{\geq 3}$
  that does not contain a rainbow triangle as induced subgraph there exists
  a color $i\in\inv{1}{k}$ such that the $i$-th monochromatic subgraph
  $G_{|i}$ of $G$ is disconnected.
\end{proposition}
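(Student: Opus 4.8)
The plan is to derive the statement from \emph{Gallai's structure theorem} for colorings without rainbow triangles (the result already mentioned in the introduction, see \cite{Gallai1967,Gyarfas2004,Cameron1997}): every Gallai coloring of a complete graph on $m\geq 2$ vertices admits a partition of its vertex set $V=V_1\cup\dots\cup V_p$ into $p\geq 2$ non-empty modules such that all edges joining two distinct parts $V_q$ and $V_r$ receive the same color, and such that the induced coloring of the quotient $K_p$ on $\{V_1,\dots,V_p\}$ uses at most two colors. Since the graph $G$ of the statement is complete, $k$-edge-colored, and contains no rainbow triangle as an induced subgraph, it is Gallai-colored, so I would first invoke this theorem to obtain such a partition into $p\geq 2$ parts.

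Next I would count colors. Let $C\subseteq\inv{1}{k}$ be the set of colors that occur on edges joining two distinct parts; equivalently, $C$ is the set of colors that appear in the quotient $K_p$. By Gallai's theorem $|C|\leq 2$, and since $k\geq 3$ we have $\inv{1}{k}\setminus C\neq\emptyset$, so we may fix a color $i\in\inv{1}{k}\setminus C$. By the defining property of $C$, no edge of color $i$ has its two endpoints in different parts; hence every edge of the monochromatic subgraph $G_{|i}$ is contained in a single part $V_q$.

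It then remains to show that $G_{|i}$ is disconnected. Fix two distinct parts $V_q$ and $V_r$ and pick vertices $x\in V_q$ and $y\in V_r$, which is possible because all parts are non-empty. Suppose there were a walk $x=u_0,u_1,\dots,u_\ell=y$ in $G_{|i}$. All of its edges have color $i$ and therefore never cross between parts, so a straightforward induction along the walk shows that $u_0,u_1,\dots,u_\ell$ all lie in $V_q$; this contradicts $y\in V_r$ with $r\neq q$. Hence no walk in $G_{|i}$ connects $x$ and $y$, and since $G_{|i}$ has vertex set $V$ with $|V|\geq 2$, it is disconnected, as claimed.

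The main obstacle is that all the combinatorial weight of the argument sits inside Gallai's structure theorem; the deduction sketched above is then routine. If a self-contained proof were wanted, one would establish that theorem by induction on $|V|$ via the modular decomposition of $G$: the maximal strong modules always partition $V$ into $p\geq 2$ classes, between-part edges are monochromatic by definition of modules, and the only non-trivial point is to show that whenever the resulting quotient is \emph{prime} (has no non-trivial modules) it can carry at most two colors. That last claim is precisely Gallai's original case analysis on rainbow-triangle-free primitive structures, and it is the technical heart that I would expect to be the hard part.
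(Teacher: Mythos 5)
Your proof is correct. The paper does not prove this statement itself---it imports it verbatim as Lemma~A of \cite{Gyarfas2004}---and your derivation from Gallai's structure theorem (partition into $p\geq 2$ parts with a quotient using at most two colors, then pick a color absent from the quotient) is precisely the standard argument by which the cited source obtains it, so there is nothing to fault. Your closing remark is also accurate: all the real work lives in Gallai's theorem, which the paper likewise treats as known (citing \cite{Gallai1967,Gyarfas2004,Cameron1997}), so deferring to it is legitimate here; the only cosmetic caveat is that your parts $V_1,\dots,V_p$ are modules of the colored graph in the paper's sense, which your phrasing already implicitly uses and which holds since only one color appears between any fixed pair of parts.
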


\begin{example}\label{exa:colored_graph}
  Consider the complete graph $G=(\{a,b,c,d,e,f\},E_1,E_2,E_3,E_4)$ that is
  illustrated in \Cref{fig:coloredGraph}.  Graph~$G$ is complete and
  $4$-edge-colored. Moreover, $G$ contains the rainbow triangle $\tri_{def}$,
  although the $1$-st monochromatic subgraph
  $G_{|1}=(\{a,b,c,d,e,f\},\{\{a,d\},\{b,d\},\{c,d\}\})$ of~$G$ is
  disconnected. This, in particular, shows that the converse of
  \Cref{lem:gyarfas_disconnected} is not always satisfied.
\end{example}

\begin{figure}
  \centering
  \includegraphics[width=0.35\linewidth]{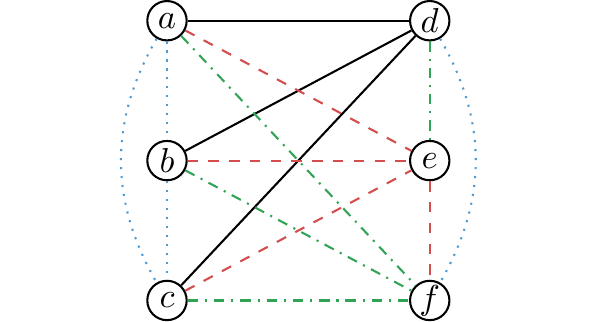}
  \caption{A complete $4$-edge-colored graph
    $G=(V=\{a,b,c,d,e,f\},E_1,E_2,E_3,E_4)$ as used in
    \Cref{exa:colored_graph}.  The edge-coloring of $G$ is illustrated
    using different edge styles: $E_1$ (black solid edges), $E_2$ (red
    dashed edges), $E_3$ (blue dotted edges), and $E_4$ (green dash-dotted
    edges). There are four non-trivial modules in~$G$, namely
    $M=\{a,b,c\}$, $\{a,b\}$, $\{b,c\}$, and $\{a,c\}$. However, $M$ is the
    only non-trivial strong module of~$G$. Thus,
    $\Mmax(V) = \{\{a,b,c\}, \{d\}, \{e\},\{f\}\}$.  }
    \label{fig:coloredGraph}	
\end{figure}

\paragraph{\textbf{Modular Decomposition}}
A \emph{module} of a complete $k$-edge-colored graph $G=(V,E_1,\dots ,E_k)$ is a
subset $M \subseteq V$ such that for every $u \in M$ and
$v \in V\setminus M$ it holds that if $\{u,v\}\in E_i$ then
$\{w,v\} \in E_i$ for all $w\in M$. The set~$V$, the empty set $\emptyset$,
and the singleton sets $\{v_1\},\dots ,\{v_{|V|}\}$ are called
\emph{trivial} modules. The set of all modules of $G$ is denoted by
$\Mods{G}$.  Examples of modules can be found in \Cref{fig:coloredGraph}
and \Cref{exa:modularDeco}.  A module $M \in \Mods{G}$ is called
\emph{strong} if for any other module $M'\in\Mods{G}$ we have
$M\subseteq M'$, $M'\subseteq M$, or $M\cap M'=\emptyset$.  In the
following, all modules considered here are non-empty unless explicitly
stated otherwise.

The set of strong modules $\Mods[str]{G}$ is called \emph{modular
  decomposition} of~$G$.  Whereas there may be exponentially many modules,
the size of the set of strong modules is~$\Mc{O}(|V|)$~\cite{EHMS:94}. In
particular, since strong modules do not overlap by definition, it holds
that \Mods[str]{G} forms a hierarchy which gives rise to a unique tree
representation $T_G$ of $G$: the \emph{modular decomposition tree} of $G$
or \emph{inclusion tree} of
\Mods[str]{G}~\cite{ehrenfeucht1999theory,Hellmuth:16a}. The vertices of
$T_G$ are identified with the elements of \Mods[str]{G}.  Adjacency in
$T_G$ is defined by the maximal proper inclusion relation, \ie, there is 
an edge $\{M, M'\}$ between $M, M' \in \Mods[str]{G}$  
if and only if $M \subset M'$ (resp.\ $M' \subset M$) and there is no $M''\in \Mods[str]{G}$ such that
$M \subset M'' \subset M'$ (resp.\ $M' \subset M'' \subset M$). The root of $T_G$ is $V$ and the leaves are the
singletons $\{v\}$ with $v \in V$.

We denote with $\Mmax(M)$ the set of all children
$M_1,\dots ,M_n \in \Mods[str]{G}$ of a strong module $M \in \Mods[str]{G}$
in the inclusion tree $T_G$. $\Mmax(M)$ is also called (maximal) modular
partition of $G[M]$ due to the fact that $\Mmax(M)$ forms a partition of
$M$ in which each element is strong and maximal with respect to inclusion
\cite{Hellmuth:16a}.

Modular decomposition trees allow us to extend the ``usual'' graph 
isomorphism to complete edge-colored graphs as follows: 
Two graphs $G=(V,E)$ and $G'=(V',E')$ are \emph{isomorphic}, if there is a
bijection $\phi \colon\! V \rightarrow V'$ such that for all vertices
$u,v \in V$ it holds that $\{u,v\}\in E$ if and only if
$\{\phi(u),\phi(v)\}\in E'$. 
Two complete $k$-edge-colored graphs $G=(V,E_1,\dots ,E_k)$
and $G'=(V',E'_1,\dots ,E'_k)$ are \emph{isomorphic}, denoted by
$G \simeq G'$, if there is a bijection
$\pi\dis \inv{1}{k}\to \inv{1}{k}$ such that for all
$i\in\inv{1}{k}$ the $i$-th monochromatic
subgraph of $G$ and the $\pi(i)$-th monochromatic subgraph of $G'$ are isomorphic. 
As shown in \cite{ER1:90,ER2:90}, two complete edge-colored graphs
are isomorphic precisely if their modular decomposition trees 
are isomorphic. This result, in particular, implies that
two $k$-edge-colored graphs $G$ and $G'$ are isomorphic (using the 
respective bijections $\phi$ and $\pi$ as above) 
if and only if,
for all subsets $W\subseteq V$, the induced subgraph $G[W]$ and 
$G'[W']$ with $W'=\{\phi(v)\mid v\in W\}$ are isomorphic 
(using the same  bijection $\pi$ as used for the colors of $G$ and $G'$).

For the definition of so-called quotient graphs we need the following

\begin{lemma}[\cite{ER1:90}, Lemma 4.11]\label{lem:sameColor}
  Let $M$ and $M'$ be two disjoint modules of a complete $k$-edge-colored 
  graph $G=(V,E_1,\dots ,E_k)$.  Then, there is a color $i\in \inv{1}{k}$
  such for all edges $\{u,v\}\in E(G)$ with $u\in M$ and $v\in M'$ it holds
  that $\{u,v\}\in E_i$.
\end{lemma}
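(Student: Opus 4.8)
The plan is to exploit the defining property of modules together with the fact that $G$ is a complete edge-colored graph, so that every pair $\{u,v\}$ with $u \in M$, $v \in M'$ is an actual edge and therefore has a well-defined color. First I would fix an arbitrary vertex $v_0 \in M'$ and an arbitrary vertex $u_0 \in M$, and set $i \coloneqq c(\{u_0, v_0\})$; this is the candidate color. I then want to show $c(\{u,v\}) = i$ for \emph{all} $u \in M$ and $v \in M'$, and I would do this in two steps, moving first within $M$ and then within $M'$.

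For the first step, fix $v_0 \in M'$ and let $u \in M$ be arbitrary. Since $M$ is a module and $v_0 \in V \setminus M$ (because $M$ and $M'$ are disjoint), the module condition applied with the edge $\{u_0, v_0\} \in E_i$ forces $\{w, v_0\} \in E_i$ for every $w \in M$; in particular $\{u, v_0\} \in E_i$. Hence every edge from $M$ to the fixed vertex $v_0$ has color $i$. For the second step, now fix an arbitrary $u \in M$ and let $v \in M'$ be arbitrary. We have just shown $\{u, v_0\} \in E_i$, and since $M'$ is a module with $u \in V \setminus M'$, the module condition applied with the edge $\{u, v_0\} \in E_i$ (here $v_0 \in M'$ plays the role of the ``internal'' vertex) yields $\{u, w\} \in E_i$ for all $w \in M'$; in particular $\{u, v\} \in E_i$. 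Combining the two steps, every edge $\{u,v\}$ with $u \in M$, $v \in M'$ lies in $E_i$, which is the claim.

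I do not anticipate a genuine obstacle here: the only subtlety is making sure the module property is invoked in the correct direction each time (with the vertex outside the module being the one whose adjacencies to the module are uniform), and that completeness of $G$ guarantees $\{u,v\}$ is always an edge so that $c(\{u,v\})$ is defined. One should also note the degenerate cases where $M$ or $M'$ is a singleton, but the argument above goes through verbatim since the module condition is vacuous or trivial in those cases. The mild care needed is purely bookkeeping: disjointness of $M$ and $M'$ is exactly what licenses treating each as ``external'' to the other, and this is used in both steps.
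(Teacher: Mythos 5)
Your argument is correct: fixing $u_0\in M$, $v_0\in M'$ and the color $i=c(\{u_0,v_0\})$, the module property of $M$ (with $v_0$ external) propagates color $i$ to all edges $\{w,v_0\}$ with $w\in M$, and then the module property of $M'$ (with each $u\in M$ external) propagates it to all of $M\times M'$; disjointness and completeness are invoked exactly where needed. Note that the paper does not prove this lemma at all --- it is cited verbatim from Ehrenfeucht and Rozenberg --- so your write-up is a valid self-contained proof of the standard kind, and there is nothing to object to.
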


\Cref{lem:sameColor} allows us to define the \emph{quotient graph}
$G/\Mmax(V)$ of a given complete $k$-edge-colored graph $G=(V,E_1,\dots ,E_k)$
and a modular partition $\Mmax(V)=\{M_1,\dots ,M_n\}$ of $G$ as follows:
$G/\Mmax(V)=(V',E'_1,\dots ,E'_k)$ has as vertex set $V'=\Mmax(V)$ and
$\{M_i,M_j\}\in E'_h$, $i\neq j$ if and only if there are vertices
$u\in M_i$, $v\in M_j$ such that $\{u,v\}\in E_h$.  Moreover, since all
edges between distinct modules in $G$ must have the same color, we can
naturally extend the edge-coloring of $G$ to its quotient graph
$G/\Mmax(V)$, that is, an edge~$\{M_i,M_j\}$ obtains color $c(\{u,v\})$ for
some $u\in M_i$ and $v\in M_j$.

\begin{remark}\label{rem:subgraph}
  By construction, for every strong module $M$ of $G$, the quotient graph
  $G[M]/\Mmax(M)$ is isomorphic to every induced subgraph $G[N]$ of $G$,
  where $N$ contains exactly one vertex from each child module
  $M_i\in \Mmax(M) = \{M_1, \dots, M_k\}$.  Moreover, if $G$ is a complete
  graph, then $G[N]$ is complete as well.  In this case, $G[M]/\Mmax(M)$ is
  always a complete graph.
\end{remark}

As shown in \cite{ER2:90,EHPR:96}, the quotient graph $G[M]/\Mmax(M)$ for a
strong module $M$ of a complete $k$-edge-colored graph~$G$ is well-defined and
satisfies exactly one of the following conditions:
\begin{enumerate}
\item[1)] $G[M]/\Mmax(M)$ contains at least two vertices and is a complete 1-edge-colored
  graph.  In this case, $M$ is called a \emph{series} module.
\item[2)] $G[M]/\Mmax(M)$ does not satisfy~(1).  In this case, $M$ is
  called a \emph{prime} module.
\end{enumerate}
For non-complete graphs there may exist a third type of strong module
called parallel (when $G[M]$ is disconnected)~\cite{MOHRING:1984}, a case
that cannot occur here.  We are aware of the fact that the definition of
the type of strong modules (series, prime) slightly differs from the one
provided, \eg, in \cite{ER1:90,ER2:90,Hellmuth:16a}, in which the type of
strong modules is defined in terms of $G[M]$ rather than on
$G[M]/\Mmax(M)$. To be more precise, in \cite{ER1:90,ER2:90,Hellmuth:16a} a
strong module $M$ is ``prime'' whenever $G[M]$ consists of trivial modules
only and $M$ is ``series'' whenever $G[M]$ is a complete 1-edge-colored
graph. In this case, however, the complete 1-edge-colored graph $K_2$ is
``prime'' and ``series'' at the same time.  Moreover, the complete
2-edge-colored graph $K_3$ with vertices $a,b,c$ and coloring
$c(\{a,c\})= c(\{b,c\}) \neq c(\{a,b\})$ would, in this definition, neither
be ``prime'' nor ``series''. Nevertheless, the quotient graphs always fall
into one of these two categories.  Hence, in order to simplify and
streamline the notation, we use this definition in terms of
$G[M]/\Mmax(M)$.

\begin{definition}[Primitive graph]
  A (complete $k$-edge-colored) graph $G$ is called \emph{primitive} if and
  only if G contains only trivial modules. 
\end{definition}

For an example, consider again the~$K_2$ or the complete 2-edge-colored~$K_4$
that contains a monochromatic subgraph isomorphic to an induced path on
four vertices. Both graphs are primitive.  On the contrary, every
2-edge-colored~$K_3$ is not primitive as it contains always a non-trivial module
of size~$2$.

A useful property of quotient graphs is provided in the following
\begin{lemma}[{\cite[Thm.\ 2.17]{EHPR:96}}]
  \label{lem:PrimeImpliesPrimitive}
  Let $M$ be a strong prime module of a complete $k$-edge-colored graph
  $G$. Then $G[M]/\Mmax(M)$ is primitive.
\end{lemma}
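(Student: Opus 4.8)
\textbf{Proof proposal for \Cref{lem:PrimeImpliesPrimitive}.}

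The plan is to argue by contradiction: assume $M$ is a strong prime module of $G$ but $G[M]/\Mmax(M)$ is not primitive, i.e., the quotient graph possesses a non-trivial module. Write $\Mmax(M)=\{M_1,\dots,M_k\}$ and let $Q\coloneqq G[M]/\Mmax(M)$ with vertex set $\{M_1,\dots,M_k\}$; note $k\geq 2$ since $M$ is prime (and in particular not a leaf). Suppose $\mathcal{N}\subseteq\{M_1,\dots,M_k\}$ is a module of $Q$ with $2\leq|\mathcal{N}|<k$. The key step is to ``lift'' $\mathcal{N}$ back to a subset of $M$: define $N\coloneqq\bigcup_{M_i\in\mathcal{N}}M_i\subseteq V$. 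I would then show that $N$ is a module of $G$. This is the heart of the argument and uses two ingredients: first, that each $M_i$ is itself a module of $G$, so edges from a vertex outside $M$ behave uniformly on each $M_i$; second, that $\mathcal{N}$ being a module of $Q$ controls the edges between the $M_i\in\mathcal{N}$ and the $M_j\notin\mathcal{N}$ with $M_j\subseteq M$. Concretely, take $u\in N$, say $u\in M_i$ with $M_i\in\mathcal{N}$, and $v\in V\setminus N$; I must check that the color of $\{u,v\}$ depends only on $v$, not on the choice of $u\in N$. Split into cases: if $v\notin M$, uniformity follows because $M$ is a module of $G$ (all of $M$, hence all of $N$, sees $v$ in one color). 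If $v\in M\setminus N$, then $v\in M_j$ for some $M_j\notin\mathcal{N}$; now the edge $\{u,v\}$ in $G$ has the same color as the edge $\{M_i,M_j\}$ in $Q$ (by \Cref{lem:sameColor} and the definition of the quotient graph, since $M_i,M_j$ are disjoint modules of $G$), and since $\mathcal{N}$ is a module of $Q$, this color is independent of which $M_i\in\mathcal{N}$ we picked; combined with $M_i$ being a module of $G$, the color $c(\{u,v\})$ is independent of $u\in N$. Hence $N$ is a module of $G$.

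Having shown $N\in\Mods{G}$, I observe that $N$ is non-trivial in a strong sense relative to $M$: since $|\mathcal{N}|\geq 2$ and each $M_i$ is non-empty, $|N|\geq 2$; and since $|\mathcal{N}|<k$, there is some $M_j\notin\mathcal{N}$ with $M_j\subseteq M$, so $N\subsetneq M$. Thus $N$ is a module of $G$ with $N\subsetneq M$ that is not contained in any single child $M_i$ and properly contains no... more precisely, $N$ overlaps the strong module structure: $N\not\subseteq M_i$ for every $i$ (as $|\mathcal{N}|\geq 2$ forces $N$ to meet at least two distinct children), yet $N$ is not a union that corresponds to $M$ itself. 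I would then derive a contradiction with $M$ being prime. The cleanest route is via the maximal modular partition: $\Mmax(M)$ consists of the inclusion-maximal strong modules strictly inside $M$, and $Q=G[M]/\Mmax(M)$. A standard fact about modular decomposition (which can be invoked from the cited literature, e.g., \cite{EHPR:96,ehrenfeucht1999theory}, or proved directly) is that $M$ is prime if and only if the only modules of $Q$ are trivial — equivalently, $Q$ being non-primitive would mean $N$ (as constructed) is a module of $G$ that, together with the $M_i$, would have to appear in the modular decomposition, contradicting the maximality/strength of the partition $\Mmax(M)$. Alternatively, and perhaps more self-containedly, I note that if $M$ is prime then $Q=G[M]/\Mmax(M)$ is by definition \emph{not} a complete $1$-edge-colored graph, but being prime is \emph{stronger}: one shows that a non-trivial module $\mathcal{N}$ of $Q$ would pull back to a strong module of $G$ strictly between the $M_i$'s and $M$, which is impossible because $\Mmax(M)$ already collects \emph{all} inclusion-maximal strong submodules of $M$; so $N$ would have to equal one of the $M_i$ or be non-strong, and a non-strong module overlapping some strong module of the hierarchy contradicts the definition of strong modules.

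The main obstacle I anticipate is making the last step rigorous without circularity, since ``prime'' here is defined via the quotient graph rather than via $G[M]$ having only trivial modules, and the paper has explicitly flagged that its definition of series/prime differs from the one in \cite{ER1:90,ER2:90,Hellmuth:16a}. So I need to be careful: with the excerpt's definition, $M$ prime merely means $Q$ is not a complete $1$-edge-colored graph (and has $\geq 2$ vertices). To get primitivity of $Q$ I really do need to use properties of the modular decomposition tree, namely that the children $\Mmax(M)$ are the maximal strong submodules and that any module of $G$ contained in $M$ is either a union of children of $M$ in the tree or is contained in a single child — this ``modules are well-behaved with respect to the decomposition tree'' statement is the crux. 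Once that structural fact is in hand (citing \cite{EHPR:96,ehrenfeucht1999theory} or \cite{Hellmuth:16a}), the pullback $N$ of a hypothetical non-trivial module $\mathcal{N}$ of $Q$ is a module of $G$ inside $M$ that meets several children but is not all of $M$, which the structure theorem forbids being anything other than trivial in $Q$; this contradiction finishes the proof. I would therefore organize the write-up as: (1) set up $Q$ and the pullback map $\mathcal{N}\mapsto N$; (2) prove $N\in\Mods{G}$ by the case analysis above; (3) invoke the modular-decomposition structure theorem to conclude $\mathcal{N}$ must have been trivial, hence $Q$ is primitive.
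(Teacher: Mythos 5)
First, note that the paper does not prove this lemma at all: it is imported verbatim from the literature (Thm.~2.17 of \cite{EHPR:96}), so there is no in-paper argument to compare against, and your proposal is an attempt to reprove a cited structure theorem. Your lifting step is correct and is indeed the standard first move: for a non-trivial module $\mathcal{N}$ of $Q=G[M]/\Mmax(M)$, the union $N=\bigcup_{M_i\in\mathcal{N}}M_i$ is a module of $G$ with $M_i\subsetneq N\subsetneq M$ for each $M_i\in\mathcal{N}$, by exactly the two-case analysis you give ($v\notin M$ handled by $M$ being a module; $v\in M\setminus N$ handled by \Cref{lem:sameColor} together with $\mathcal{N}$ being a module of $Q$).

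The gap is in the final step, and it is not a bookkeeping issue. From $M_i\subsetneq N\subsetneq M$ you obtain a contradiction with $M_i\in\Mmax(M)$ only if $N$ is \emph{strong}, and nothing forces that. Your fallback --- ``a non-strong module overlapping some strong module of the hierarchy contradicts the definition of strong modules'' --- is backwards: by definition \emph{no} module overlaps a strong module, so a non-strong $N$ merely overlaps some other (necessarily non-strong) module, and no contradiction arises. Compare the $1$-edge-colored $K_3$ on $\{a,b,c\}$: there $\{a,b\}$ is a perfectly good non-strong module sitting strictly between a child and the root, and no inconsistency results (that module is series, which is exactly the point). The case where $N$ is not strong is where the content of the theorem lives: one must show that if every non-trivial module of $Q$ overlaps another one, then, via the union/intersection/difference calculus for overlapping modules, all edges of $Q$ carry the same color, so $Q$ is a complete $1$-edge-colored graph and $M$ is series --- contradicting primality in the paper's (weak) sense of ``quotient not complete $1$-edge-colored.'' Your other proposed exit, invoking ``the modular-decomposition structure theorem'' to conclude that $\mathcal{N}$ must be trivial, is circular: the dichotomy ``the quotient of a strong module by its children is complete $1$-colored or primitive'' \emph{is} Thm.~2.17 of \cite{EHPR:96}, i.e., the statement being proved. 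So either cite the result, as the paper does, or supply the overlapping-modules argument; as written, the proof does not close.
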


\begin{example}
  \label{exa:modularDeco}
  Consider the complete $4$-edge-colored graph
  $G =(\inv{1}{7}, E_1,E_2,E_3,E_4)$ as is illustrated in
  \Cref{fig:modularDecomposition}~(a) with edge sets:
  \begin{align*}
    E_1 &= \big\{\,\{1,6\},\{2,6\},\{3,6\},\{3,7\},
        &E_2&=\big\{\,\{3,4\},\{3,5\}\,\big\},\\
        &\qquad\{4,6\},\{4,7\},\{5,6\},\{5,7\}\,\big\},\\[.75ex]
    E_3 &=\binom{\inv{1}{7}}{2}\setminus(E_1\cup E_2 \cup E_4),
        &E_4&=\big\{\,\{1,2\}\,\big\}.
  \end{align*}
  The modular decomposition of $G$ is
  $\Mods[str]{G}=\{\inv{1}{7},
  \{1,2\},\{3,4,5\},\{4,5\},\{1\},\dots,\{7\}\}$ and the maximal strong
  modular partition of the module $\inv{1}{7}$ is
  $\Mmax(\inv{1}{7})=\{\{1,2\},\{6\},\{3,4,5\},\{7\}\}$.
  \Cref{fig:modularDecomposition}~(b) shows the modular decomposition tree
  of $G$.  \Cref{fig:modularDecomposition}~(c) and
  \Cref{fig:modularDecomposition}~(d) illustrate the quotient graphs
  $G/\Mmax(\inv{1}{7})$ and $G[\{4,5\}]/\Mmax(\{4,5\})$, respectively.  It can
  easily be verified that the module $\inv{1}{7}$ is prime, whereas the
  modules $\{1,2\}$, $\{4,5\}$, and $\{3,4,5\}$ are series.
\end{example}
	
\begin{figure}
  \centering \includegraphics[width=0.9\linewidth]{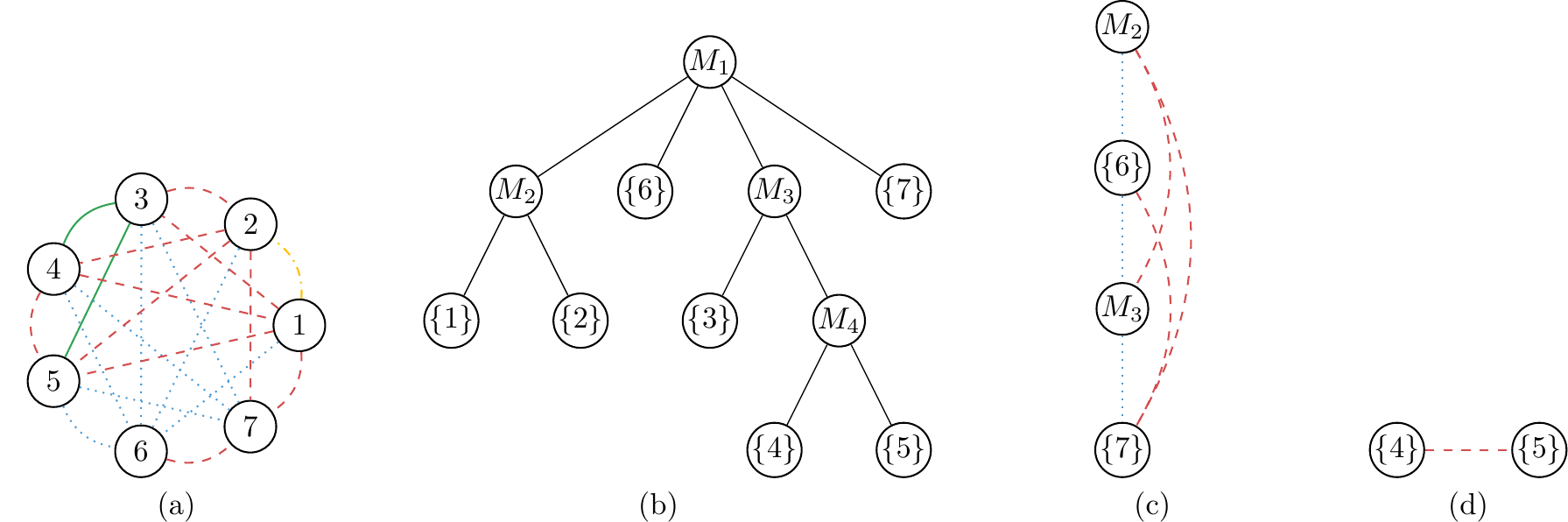}
  \caption{Panel~(a) illustrates the complete $4$-edge-colored graph $G$ that is
    defined in \Cref{exa:modularDeco}.  The edge-coloring of $G$ is
    illustrated using different edge styles: $E_1$ (blue dotted edges),
    $E_2$ (green solid edges), $E_3$ (red dashed edges), and $E_4$ (yellow
    dash-dotted edge).  The modular decomposition tree $T_G$ of $G$ is
    shown in Panel~(b), where $M_1=\inv{1}{7}$, $M_2=\{1,2\}$,
    $M_3=\{3,4,5\}$, and $M_4=\{4,5\}$.  Panels~(c) and (d) illustrate the
    quotient graph $G[M_1]/\Mmax(M_1)$ and $G[M_4]/\Mmax(M_4)$,
    respectively.  }
  \label{fig:modularDecomposition}
\end{figure}

Interestingly, every complete primitive graph that contains at least three
colors must contain a rainbow triangle:
\begin{lemma}\label{lem:prime_rainbow}
  Let $k\in\mathbb{N}_{\geq 3}$ and $G$ be a complete $k$-edge-colored graph.
  If $G$ is primitive, then $G$ contains a rainbow triangle.
\end{lemma}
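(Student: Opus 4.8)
The plan is to argue by contradiction: suppose $G$ is a complete $k$-edge-colored graph with $k \geq 3$, $G$ is primitive, but $G$ contains no rainbow triangle. The first step is to invoke \Cref{lem:gyarfas_disconnected}, which tells us that some monochromatic subgraph $G_{|i}$ is disconnected. Let $C \subsetneq V$ be the vertex set of one connected component of $G_{|i}$, so that $\emptyset \neq C \neq V$. The idea is to show that $C$ is a non-trivial module of $G$, contradicting primitivity (provided $|C| \geq 2$; the case $|C| = 1$ needs separate handling, which I address below).

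To see that $C$ is a module, I would take any $u \in C$ and $v \in V \setminus C$ and show that the color of $\{u,v\}$ does not depend on the choice of $u$. Since $v$ lies outside the component $C$ of $G_{|i}$, no edge between $C$ and $V\setminus C$ has color $i$. Now fix $u, u' \in C$; since $C$ is connected in $G_{|i}$, there is a path $u = w_0, w_1, \dots, w_m = u'$ inside $C$ with every edge $\{w_j, w_{j+1}\}$ of color $i$. For each $j$, consider the triangle on $\{w_j, w_{j+1}, v\}$: the edge $\{w_j, w_{j+1}\}$ has color $i$, while $\{w_j, v\}$ and $\{w_{j+1}, v\}$ both have colors different from $i$ (they cross the cut). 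Since $G$ has no rainbow triangle, these two cross-edges must share the same color. Walking along the path, $c(\{u,v\}) = c(\{w_1,v\}) = \dots = c(\{u',v\})$. Hence every vertex of $C$ sees $v$ with the same color, for each such $v$, which is exactly the condition that $C$ is a module of $G$.

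The remaining obstacle is the degenerate case where \emph{every} component of the disconnected $G_{|i}$ is a single vertex, i.e. $G_{|i}$ has no edges at all — but this cannot happen, since the coloring $c$ is surjective onto $\inv{1}{k}$, so color $i$ is used by at least one edge and that edge's two endpoints lie in a common component of size $\geq 2$. Thus the component $C$ we selected can be taken to have $|C| \geq 2$, and since $C \neq V$ it is a non-trivial module. This contradicts the assumption that $G$ is primitive, completing the proof. The only subtlety worth double-checking is that $C \neq V$: this holds precisely because $G_{|i}$ is disconnected, so it has at least two components and none of them is all of $V$.
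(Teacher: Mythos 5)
Your proof is correct and takes essentially the same route as the paper's: invoke \Cref{lem:gyarfas_disconnected} to obtain a disconnected monochromatic subgraph $G_{|i}$, select a connected component of size at least two (guaranteed because the coloring is surjective, so $E_i\neq\emptyset$), and propagate the color of the edges to any outside vertex along a path inside the component using the absence of rainbow triangles, thereby exhibiting a non-trivial module. The paper phrases the argument as a contraposition and walks along a spanning tree of the component rather than arbitrary paths, but these are cosmetic differences.
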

\begin{proof}
  The proof proceeds by contraposition: We show that every complete
  $k$-edge-colored graph $G$ with $k\in\mathbb{N}_{\geq 3}$ that does not
  contain a rainbow triangle as induced subgraph contains a non-trivial
  module, \ie, $G$ is not primitive.
	
  Let $G=(V,E_1,\dots ,E_k)$ be a complete $k$-edge-colored graph with
  $k\in\mathbb{N}_{\geq 3}$ that does not contain a rainbow triangle.  By
  \Cref{lem:gyarfas_disconnected} there is a color $i\in\inv{1}{k}$ such
  that $G_{|i}$ is disconnected.  Let $M_1,\dots ,M_m$, $m\geq 2$ be the
  connected components of $G_{|i}$.  Since $G$ is a $k$-edge-colored graph
  with $k\geq 3$, at least one connected component, say $M_1$, contains an
  edge $\{u,v\}$ of color $i$.  Furthermore, let $x\in V\setminus M_1$ be a
  vertex that is not in $M_1$. Note that neither $\{u,x\}$ nor $\{v,x\}$
  can have color $i$ since $x\notin M_1$.  Therefore, $\{u,x\}$ must have
  some color $j\neq i$.  Since $G$ does not contain rainbow triangles, the
  edge $\{v,x\}$ must have color $j$ as well. Repeating the latter
  arguments along the edges in the spanning tree of $G_{|i}[M_1]$ implies
  that all vertices in $M_1$ must be connected to $x$ via an edge of color
  $j$.
  The latter, in particular, remains true for all vertices in
  $V\setminus M_1$, that is, all edges leading from vertices in $M_1$ to a
  fixed vertex $x\in V\setminus M_1$ have the same color.  Hence, $M_1$ is
  a non-trivial module in $G$ which implies that $G$ cannot be primitive.
\end{proof}

\section{Definition and Basic Properties of Complete Edge-Colored Permutation Graphs}
\label{sec:permG}

In this section we introduce the formal definitions and provide some useful
results for \emph{(complete edge-colored) permutation} graphs.  Recall that
a permutation is a bijection from $\inv{1}{n}$ to itself. Intuitively, the
vertices of a permutation graph are the elements of $\inv{1}{n}$ and the
edges indicate whether the order of two elements is ``reversed'' by the
permutation.  For technical reasons that become clear later it is more
convenient to use a labeling $\ell$ that assigns to each vertex $v$ a
natural number rather than to define the vertex set $V$ directly as the set
$\inv{1}{|V|}$.  To this end, we need the following:
\begin{definition}[Labeling]
  A \emph{labeling} $\ell$ of a graph $G=(V,E)$ is a bijective map
  $\ell\dis V \rightarrow \inv{1}{|V|}$ that associates each vertex $v$
  with a unique natural number $\ell(v)\in \inv{1}{|V|}$.
\end{definition}
Using a labeling, permutation graphs~\cite{Pnueli:1971} can be 
defined as follows.
\begin{definition}[Permutation Graph \cite{Pnueli:1971,Brandstadt1999,Koh2007}]
  A graph $G$ is called a \emph{(simple) permutation graph} if there exists
  a labeling $\ell$ and a permutation $\pi=(\pi(1),\dots,\pi(|V|))$ such
  that for all $u,v\in V$ we have:
  \begin{equation*}
    \{u,v\} \in E\Longleftrightarrow
    \ell(u)>\ell(v)
    \text{ and }
    \pi^{-1}(\ell(u))<\pi^{-1}(\ell(v)).
  \end{equation*}
  In this case, we say that $G$ is a \emph{(simple) permutation graph} of
  $\pi$.  If the labeling $\ell$ is specified, we may also write that $G$
  together with $\ell$, or simply $(G,\ell)$, is a (simple)
  \emph{permutation graph} (of $\pi$).  
\end{definition}
A permutation graph of $\pi=(1,5,2,4,7,3,6)$ is illustrated
in~\Cref{fig:permutationGraph}~(a).

\begin{figure}
  \centering
  \includegraphics[width=0.7\linewidth]{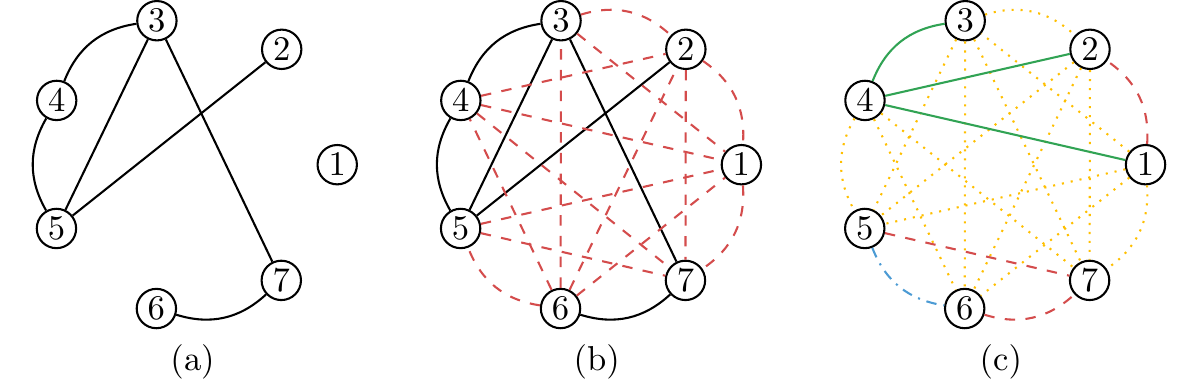}
  \caption{Three different edge-colored graphs $(V,E)$ together with
    labeling $\ell\colon V\to \inv{1}{7}$ that is implicitly given in the
    drawing by writing $\ell(v)$ on each vertex $v\in V$.  Panel~(a)
    illustrates the simple permutation graph~$G$ of~$\pi=(1,5,2,4,7,3,6)$.
    For example, $G$ contains the edge $\{3,7\}$ since $7>3$ and
    $\pi^{-1}(7)<\pi^{-1}(3)$.  Panel~(b) shows the complete $2$-edge-colored
    permutation graph of $\pi$ and $\ol{\pi}=(6,3,7,4,2,5,1)$.  Panel~(c)
    shows the complete $4$-edge-colored permutation graph of the permutations
    $(2~1~3~4~7~5~6)$ (red dashed edges), $(4~1~2~3~5~6~7)$ (green solid
    edges), $(1~2~3~4~6~5~7)$ (blue dash-dotted edge), and
    $(5~6~7~3~1~2~4)$ (yellow dotted edges).  }
  \label{fig:permutationGraph}
\end{figure}

The property of being a simple permutation graph is hereditary:
\begin{lemma}[\cite{Crespelle2010}] \label{lem:simplePerm-hereditary}
The family of permutation graphs is \emph{hereditary}, that is, 
every induced subgraph of a permutation graph is a permutation graph.
\end{lemma}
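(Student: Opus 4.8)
The plan is to show that if $G=(V,E)$ is a permutation graph with witnessing labeling $\ell$ and permutation $\pi$, and $W\subseteq V$, then $G[W]$ is again a permutation graph. The obvious obstacle is that $\ell$ restricted to $W$ is no longer a labeling in the sense of the definition: its image is some $k$-element subset of $\inv{1}{|V|}$ rather than $\inv{1}{|W|}$, and similarly $\pi$ does not directly restrict to a permutation of $\inv{1}{|W|}$. So the first step is to record the key order-theoretic observation: the two conditions $\ell(u)>\ell(v)$ and $\pi^{-1}(\ell(u))<\pi^{-1}(\ell(v))$ depend only on the relative order of the values $\ell(v)$ and of the values $\pi^{-1}(\ell(v))$ among the vertices concerned, not on the actual integers. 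Concretely, adjacency of $u,v$ is determined by whether $u$ and $v$ are ``out of order'' in the two linear orders $<_\ell$ (order by $\ell$-value) and $<_{\pi}$ (order by $\pi^{-1}\circ\ell$-value) on $V$.

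Second, I would make this precise via the standard ``reduction''/``flattening'' map. Let $k=|W|$ and let $\sigma\colon \ell(W)\to\inv{1}{k}$ be the unique order isomorphism from the $k$-element set $\ell(W)\subseteq\inv{1}{|V|}$ to $\inv{1}{k}$ (i.e.\ $\sigma$ sends the $j$-th smallest element of $\ell(W)$ to $j$). Define $\ell'\coloneqq \sigma\circ(\ell|_W)\colon W\to\inv{1}{k}$; this is a bijection, hence a labeling of $G[W]$, and it is order-preserving relative to $\ell$, so $\ell'(u)>\ell'(v)\iff \ell(u)>\ell(v)$ for $u,v\in W$. Next I would construct the witnessing permutation $\pi'\in\Pc{k}$. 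The cleanest way: consider the sequence obtained by listing the values of $\ell'$ on $W$ in the order in which the corresponding values $\ell(v)$ appear in the sequence $\pi=(\pi(1),\dots,\pi(|V|))$ — that is, take the subsequence of $\pi$ consisting of the entries lying in $\ell(W)$, then apply $\sigma$ entrywise. Call the resulting length-$k$ sequence $\pi'$; it is a permutation of $\inv{1}{k}$, and by construction $(\pi')^{-1}(\ell'(v))$ records the position of $\ell(v)$ within the restricted subsequence of $\pi$, which has the same relative order as $\pi^{-1}(\ell(v))$ does among $\{\pi^{-1}(\ell(w)):w\in W\}$. Hence for $u,v\in W$ we get $(\pi')^{-1}(\ell'(u))<(\pi')^{-1}(\ell'(v)) \iff \pi^{-1}(\ell(u))<\pi^{-1}(\ell(v))$.

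Third, I would simply combine the two equivalences: for $u,v\in W$,
\[
\{u,v\}\in E(G[W]) \iff \{u,v\}\in E \iff \ell(u)>\ell(v)\ \wedge\ \pi^{-1}(\ell(u))<\pi^{-1}(\ell(v)) \iff \ell'(u)>\ell'(v)\ \wedge\ (\pi')^{-1}(\ell'(u))<(\pi')^{-1}(\ell'(v)),
\]
where the first equivalence is the definition of induced subgraph (an edge of $G$ with both ends in $W$ is an edge of $G[W]$, and $G[W]$ has no others). This shows $(G[W],\ell')$ is a permutation graph of $\pi'$, completing the proof. The only real content is verifying that the two ``relative order is preserved'' claims for $\ell'$ and $\pi'$ hold; both are immediate consequences of using order isomorphisms and taking subsequences, so I expect no serious obstacle — the main care needed is bookkeeping to make sure $\pi'$ is defined so that $(\pi')^{-1}$ really does track positions within the sub-sequence of $\pi$, rather than getting an inadvertent inversion.
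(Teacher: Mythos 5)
Your proof is correct: the order isomorphism $\sigma$ and the subsequence construction of $\pi'$ preserve exactly the two relative orders that determine adjacency under the paper's definition, which is all that is needed. The paper itself only cites this lemma from the literature and gives no proof, but your argument is the standard one and is precisely the single-permutation special case of the flattening construction the paper carries out in its proof of \Cref{pro:i_iii}, so the approaches agree.
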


The following known \namecref{lem:asc_desc_seq} relates subsequences
of a permutation to independent sets and cliques in the corresponding
permutation graph.
\begin{proposition}[{\cite[p.~159]{Golumbic2004}}]\label{lem:asc_desc_seq}
  Let $(G,\ell)$ be a simple permutation graph of some permutation
  $\pi \in \Pn$.  The following statements are true for any subsequence
  $S=(\pi(i_1),\dots, \pi(i_k))$ of $\pi$ with
  $1 \leq i_1 \leq \dots \leq i_k \leq n$:
  \begin{enumerate}
  \item[(i)] $S$ is ascending if and only if
    $\{\,\ell^{-1}(\pi(i))\mid i_1 \leq i \leq i_k\,\}$ is an independent
    set in~$G$.
  \item[(ii)] $S$ is descending if and only if
    $\{\,\ell^{-1}(\pi(i))\mid i_1 \leq i \leq i_k\,\}$ is a clique in~$G$.
  \end{enumerate}
\end{proposition}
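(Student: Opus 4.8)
The plan is to peel off the defining condition of a permutation graph and observe that, once the vertices are written in the form $\ell^{-1}(\pi(\,\cdot\,))$, adjacency collapses to a single inequality between permuted values. Fix two positions $1 \le a < b \le n$ and set $u \coloneqq \ell^{-1}(\pi(a))$ and $v \coloneqq \ell^{-1}(\pi(b))$, so that $\ell(u) = \pi(a)$, $\ell(v) = \pi(b)$, $\pi^{-1}(\ell(u)) = a$, and $\pi^{-1}(\ell(v)) = b$. I claim that, for the unordered pair,
\begin{equation*}
  \{u,v\} \in E \iff \pi(a) > \pi(b).
\end{equation*}
Indeed, a short case distinction on the sign of $\pi(a) - \pi(b)$ settles this: if $\pi(a) > \pi(b)$ then the ordered pair $(u,v)$ satisfies both clauses of the definition, namely $\ell(u) > \ell(v)$ and $\pi^{-1}(\ell(u)) = a < b = \pi^{-1}(\ell(v))$, so $\{u,v\} \in E$; and if $\pi(a) < \pi(b)$ then $(u,v)$ fails the clause $\ell(u) > \ell(v)$ while $(v,u)$ fails the clause $\pi^{-1}(\ell(v)) < \pi^{-1}(\ell(u))$, so $\{u,v\} \notin E$. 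In words, two vertices of $G$ are adjacent exactly when the value lying in the earlier of their two $\pi$-positions is the larger one.

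From here I would argue directly. Let $S = (\pi(i_1),\dots,\pi(i_k))$ be a subsequence; since an ascending or a descending sequence of length $\ge 2$ has pairwise distinct entries and $\pi$ is injective, I may assume $i_1 < \dots < i_k$. Let $W \coloneqq \{\,\ell^{-1}(\pi(i_j)) \mid 1 \le j \le k\,\}$ be the set of vertices corresponding to the entries of $S$. For~(i): $S$ is ascending $\iff$ $\pi(i_a) < \pi(i_b)$ for all $1 \le a < b \le k$ (transitivity of $<$) $\iff$, by applying the displayed equivalence to each such pair, no two vertices of $W$ are adjacent in $G$ $\iff$ $W$ is an independent set. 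For~(ii): $S$ is descending $\iff$ $\pi(i_a) > \pi(i_b)$ for all $1 \le a < b \le k$ $\iff$ every pair of vertices of $W$ is an edge of $G$ $\iff$ $W$ is a clique. This establishes both equivalences.

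I do not expect a genuine difficulty; the one place that needs care is the opening step, i.e.\ keeping the roles of $\ell$, $\ell^{-1}$, $\pi$, and $\pi^{-1}$ straight so as to be certain that the two comparisons in the permutation-graph definition (``larger label'' and ``earlier $\pi$-position'') really do amount to the single inversion condition $\pi(a) > \pi(b)$ once the positions have been ordered $a < b$. After that, both parts are a direct translation; in particular, heredity of the class (\Cref{lem:simplePerm-hereditary}) is not needed here, as every adjacency test is carried out in $G$ itself rather than in $G[W]$.
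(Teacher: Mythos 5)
Your proof is correct. Note that the paper does not prove this proposition at all --- it is imported verbatim from Golumbic's book (\cite[p.~159]{Golumbic2004}) --- so there is no in-paper argument to compare against; what you give is the standard one: reduce adjacency of $\ell^{-1}(\pi(a))$ and $\ell^{-1}(\pi(b))$ for $a<b$ to the single inversion condition $\pi(a)>\pi(b)$, then use transitivity of $<$ to pass from pairwise comparisons to ascending/descending and from pairwise (non-)adjacency to clique/independent set. One small remark: the set in the statement is written $\{\ell^{-1}(\pi(i))\mid i_1\leq i\leq i_k\}$, which read literally would range over the whole contiguous block of positions and would make the forward implication false (take $\pi=(1,3,2,4)$ and $S=(1,4)$); you silently and correctly read it as the vertices indexed by the subsequence positions $i_1,\dots,i_k$ only, which is clearly the intended meaning and the one consistent with how the proposition is used in \Cref{cor:indepentetSet_iota}.
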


By \Cref{lem:asc_desc_seq}, a simple permutation graph is edgeless (resp.,
complete) if and only if the corresponding permutation is the identity
permutation (resp., the reversed identity permutation).

\begin{corollary}
  \label{cor:indepentetSet_iota}
  Let $G=(V,E)$ be the simple permutation graph of a permutation $\pi$. The
  following statements are true:
  \begin{enumerate}
  \item[(i)]
    $\hbox to
    0pt{$E=\emptyset$}\phantom{E=\binom{V}{2}}\Longleftrightarrow\pi=\iota$.
  \item[(ii)] $E=\binom{V}{2}\Longleftrightarrow\pi=\ol{\iota}$.
  \end{enumerate}
\end{corollary}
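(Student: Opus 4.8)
The plan is to obtain both equivalences as immediate consequences of \Cref{lem:asc_desc_seq}, applied to the \emph{full} subsequence $S=(\pi(1),\dots,\pi(n))=\pi$ of $\pi$, where we write $n\coloneqq|V|$. The one preliminary observation I would record is that, for this choice of $S$, the vertex set appearing in \Cref{lem:asc_desc_seq} is all of $V$: since $\ell\colon V\to\inv{1}{n}$ and $\pi\colon\inv{1}{n}\to\inv{1}{n}$ are bijections, the map $i\mapsto\ell^{-1}(\pi(i))$ is a bijection onto $V$, so that $\{\,\ell^{-1}(\pi(i))\mid 1\le i\le n\,\}=V$.

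For part~(i), I would argue as follows. By definition, $E=\emptyset$ holds precisely when $V$ is an independent set of $G$. Combining the observation above with \Cref{lem:asc_desc_seq}(i), this is equivalent to $S=\pi$ being ascending, \ie, $\pi(1)<\pi(2)<\dots<\pi(n)$. Among the permutations in $\Pn$ the only strictly increasing sequence is $(1,2,\dots,n)$, which is exactly $\iota$; hence $E=\emptyset$ if and only if $\pi=\iota$.

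Part~(ii) is completely parallel: $E=\binom{V}{2}$ holds precisely when $V$ is a clique of $G$, which by \Cref{lem:asc_desc_seq}(ii) is equivalent to $S=\pi$ being descending, \ie, $\pi(1)>\pi(2)>\dots>\pi(n)$. The unique strictly decreasing permutation in $\Pn$ is $(n,n-1,\dots,1)$, and since $\ol{\iota}(i)=\iota(n+1-i)=n+1-i$ this sequence is precisely $\ol{\iota}$; hence $E=\binom{V}{2}$ if and only if $\pi=\ol{\iota}$.

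I do not expect any genuine obstacle here: the statement is essentially \Cref{lem:asc_desc_seq} read off for the whole permutation, together with the elementary fact that $\iota$ (resp.\ $\ol{\iota}$) is the unique monotonically increasing (resp.\ decreasing) element of $\Pn$. The only place that needs a moment of care is the bookkeeping in the first paragraph, namely verifying that the index set used in \Cref{lem:asc_desc_seq} really coincides with $V$ when $S$ is taken to be all of $\pi$.
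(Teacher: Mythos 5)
Your proof is correct and follows exactly the route the paper intends: the paper's own proof is just the one line ``Follows directly from \Cref{lem:asc_desc_seq}'', and your argument is precisely that deduction spelled out, applying the proposition to the full subsequence $S=\pi$ and noting that the associated vertex set is all of $V$. No issues.
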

\begin{proof}
  Follows directly from \Cref{lem:asc_desc_seq}.
\end{proof}

Another well-known property of simple permutation graphs is summarized in
the following \namecref{prop:perm_complement}. It states that the complement
of a simple permutation graph of a permutation $\pi$ is isomorphic to the
simple permutation graph of the reversed permutation $\ol{\pi}$ of $\pi$.
\begin{proposition}[\cite{Koh2007}, Proposition~2.2]
  \label{prop:perm_complement}
  Assume that $(G,\ell)$ is a simple permutation graph of
  $\pi\in \Mc{P}_{|V|}$. Then $(\ol{G},\ell)$ is the simple permutation
  graph of $\ol{\pi}$.
\end{proposition}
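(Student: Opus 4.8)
The plan is to prove Proposition~\ref{prop:perm_complement} directly from the defining biconditional of a permutation graph together with the description of the reversed permutation $\ol{\pi}$. The key point is that the same labeling $\ell$ works for both $G$ and $\ol{G}$, so I do not need to construct a new vertex-labeling; I only need to check that, under $\ell$, the adjacency condition for $\ol{G}$ coincides with the permutation-graph condition for $\ol{\pi}$.

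First I would fix $n=|V|$ and recall the two relevant facts. By definition, for all $u,v\in V$ we have $\{u,v\}\in E(G)$ if and only if $\ell(u)>\ell(v)$ and $\pi^{-1}(\ell(u))<\pi^{-1}(\ell(v))$. And by definition of the reversed permutation, $\ol{\pi}(i)=\pi(n+1-i)$ for all $i$, from which one computes $\ol{\pi}^{-1}(j)=n+1-\pi^{-1}(j)$ for every $j\in\inv{1}{n}$: indeed $\ol{\pi}\bigl(n+1-\pi^{-1}(j)\bigr)=\pi\bigl(n+1-(n+1-\pi^{-1}(j))\bigr)=\pi(\pi^{-1}(j))=j$. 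This little identity is the crux of the argument; everything else is bookkeeping.

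Now I would unwind the definition of the simple permutation graph of $\ol{\pi}$ with labeling $\ell$. For distinct $u,v\in V$, the pair $\{u,v\}$ is an edge of the permutation graph of $\ol{\pi}$ (with labeling $\ell$) iff $\ell(u)>\ell(v)$ and $\ol{\pi}^{-1}(\ell(u))<\ol{\pi}^{-1}(\ell(v))$. Using the identity above, the second condition becomes $n+1-\pi^{-1}(\ell(u))<n+1-\pi^{-1}(\ell(v))$, i.e.\ $\pi^{-1}(\ell(u))>\pi^{-1}(\ell(v))$. So $\{u,v\}$ is an edge of the permutation graph of $\ol{\pi}$ iff $\ell(u)>\ell(v)$ and $\pi^{-1}(\ell(u))>\pi^{-1}(\ell(v))$. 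On the other hand, $\{u,v\}\in E(\ol{G})$ iff $\{u,v\}\notin E(G)$, i.e.\ iff it is \emph{not} the case that $\ell(u)>\ell(v)$ and $\pi^{-1}(\ell(u))<\pi^{-1}(\ell(v))$. Since $u\neq v$ and $\ell$ is a bijection into $\inv{1}{n}$, we have $\ell(u)\neq\ell(v)$, and likewise $\pi^{-1}(\ell(u))\neq\pi^{-1}(\ell(v))$, so each of these two strict inequalities holds in exactly one of the two orderings of $u,v$. A short case analysis on whether $\ell(u)>\ell(v)$ or $\ell(u)<\ell(v)$ then shows that $\{u,v\}\notin E(G)$ is equivalent to: $\ell(u)>\ell(v)$ and $\pi^{-1}(\ell(u))>\pi^{-1}(\ell(v))$, or $\ell(v)>\ell(u)$ and $\pi^{-1}(\ell(v))>\pi^{-1}(\ell(u))$. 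By symmetry of the edge $\{u,v\}$ the second disjunct is just the first one with $u,v$ swapped, so this matches exactly the edge condition for the permutation graph of $\ol{\pi}$ derived above. Hence $(\ol{G},\ell)$ is the simple permutation graph of $\ol{\pi}$.

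The only mild obstacle is the case analysis in the last step: one has to be careful that negating ``$\ell(u)>\ell(v)$ and $\pi^{-1}(\ell(u))<\pi^{-1}(\ell(v))$'' correctly collapses, using $\ell(u)\neq\ell(v)$ and $\pi^{-1}(\ell(u))\neq\pi^{-1}(\ell(v))$, to the symmetric condition ``both $\ell$ and $\pi^{-1}\circ\ell$ order the pair the same way.'' Once that Boolean simplification is spelled out, the identity $\ol{\pi}^{-1}(j)=n+1-\pi^{-1}(j)$ does the rest, and there is nothing deeper to do. Alternatively, one could cite \Cref{prop:perm_complement}'s statement as essentially folklore and give this as the short verification, but writing it out as above keeps the argument self-contained.
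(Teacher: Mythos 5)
Your proof is correct. Note that the paper itself gives no proof of this statement---it is imported verbatim from Koh--Ree (Proposition~2.2 of \cite{Koh2007})---so there is nothing in the text to compare against; your direct verification via the identity $\ol{\pi}^{-1}(j)=n+1-\pi^{-1}(j)$ and the observation that, for distinct $u,v$, the negation of ``$\ell$ and $\pi^{-1}\circ\ell$ disagree on the pair'' is exactly ``$\ell$ and $\pi^{-1}\circ\ell$ agree on the pair'' is the standard argument and is carried out correctly, including the care taken with the symmetric reading of the edge condition for unordered pairs.
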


Instead of considering simple permutation graphs (which represent a
		single permutation) we are interested in \emph{complete $k$-edge-colored permutation graphs}
		(which represent a set of $k$ permutations).

\begin{definition}[Complete $k$-edge-colored Permutation Graph]
  Let $G=(V,E_1,\dots,E_k)$ be a complete and $k$-edge-colored graph.  Then, $G$
  is a \emph{complete $k$-edge-colored permutation graph}   if there is a labeling
  $\ell\dis V \rightarrow \inv{1}{|V|}$ and $k$ permutations
  $\pi_1,\dots,\pi_k$ of length $|V|$ such that for all $i\in\inv{1}{k}$
  the $i$-th monochromatic labeled subgraph $(G_{|i},\ell)$ is a simple
  permutation graph of $\pi_i$, \ie, for all $i\in\inv{1}{k}$ and all
  $u,v\in V$ we have:
  \begin{equation*}
    \{u,v\} \in E_i\Longleftrightarrow
    \ell(u)>\ell(v)
    \text{ and }
    \pi_i^{-1}(\ell(u))<\pi_i^{-1}(\ell(v)).
  \end{equation*}
  In this case, we say that $G$ is a \emph{complete $k$-edge-colored permutation
    graph} or simply, complete edge-colored permutation graph (of
  $\pi_1,\dots,\pi_k$).  If the labeling $\ell$ is specified, we may also
  write that $G$ together with $\ell$, or simply $(G,\ell)$, is complete
  $k$-edge-colored \emph{permutation graph} (of $\pi_1,\dots,\pi_k$).
\end{definition}
In~\Cref{fig:permutationGraph}~(c) a complete $4$-edge-colored permutation graph
is illustrated.
	
Note that for a complete $k$-edge-colored permutation graph $G\not\simeq K_1$ of
permutations~$\pi_1,\dots,\pi_k$ all elements $E_1, \ldots, E_k$ are
non-empty, since the edge-coloring of $G$ is surjective.  Thus, by
\Cref{cor:indepentetSet_iota}, we have that $\pi_i \neq \iota$ for all
$i\in\inv{1}{k}$.  \Cref{cor:indepentetSet_iota} also implies that for all
$n>1$ there is only one complete\linebreak $1$-edge-colored permutation graph on
$|V|=n$ vertices, namely the complete graph $K_n$ which is the simple
permutation graph of $\ol{\iota}$.  Clearly, the definition of $1$-edge-colored permutation graphs coincides with the
definition of simple permutation graphs. 

\begin{corollary}\label{cor:two_colored_one_label}
  Let $G$ be a complete $k$-edge-colored graph with $k\in\{1,2\}$.
  Assume that $(G_{|1},\ell)$ is a simple permutation graph for
  $\pi$.  Then $(G,\ell)$ is a complete $k$-edge-colored permutation graph
  for $\pi$ (and $\ol{\pi}$).
\end{corollary}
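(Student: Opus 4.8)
The plan is to split on the value of $k$. The case $k=1$ is immediate: as remarked immediately before the corollary, the notion of a complete $1$-edge-colored permutation graph coincides with that of a simple permutation graph, so $(G,\ell)=(G_{|1},\ell)$ is already a complete $1$-edge-colored permutation graph of $\pi$ by hypothesis. (Since $G$ is complete, \Cref{cor:indepentetSet_iota} moreover forces $\pi=\ol{\iota}$, but we will not need this.)

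For $k=2$ the argument rests on a single structural observation followed by an appeal to \Cref{prop:perm_complement}. First I would note that, because $G=(V,E_1,E_2)$ is complete and the color classes partition the edge set, we have $E_1\cup E_2=\binom{V}{2}$ and $E_1\cap E_2=\emptyset$; hence $E_2=\binom{V}{2}\setminus E_1$, which is exactly the edge set of $\ol{G_{|1}}$. In other words, $G_{|2}=\ol{G_{|1}}$. Since $(G_{|1},\ell)$ is by assumption a simple permutation graph of $\pi$, \Cref{prop:perm_complement} yields that $(\ol{G_{|1}},\ell)=(G_{|2},\ell)$ is the simple permutation graph of $\ol{\pi}$. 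Now set $\pi_1\coloneqq\pi$ and $\pi_2\coloneqq\ol{\pi}$; then for each $i\in\inv{1}{2}$ the $i$-th monochromatic labeled subgraph $(G_{|i},\ell)$ is a simple permutation graph of $\pi_i$ under the common labeling $\ell$, which is precisely the definition of $(G,\ell)$ being a complete $2$-edge-colored permutation graph of $\pi$ and $\ol{\pi}$.

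I do not expect any genuine obstacle: the claim is a bookkeeping consequence of the complementation property of $2$-edge-colored complete graphs together with \Cref{prop:perm_complement}. The only point worth a sentence of care is consistency with surjectivity of the coloring---when $k=2$ both $E_1$ and $E_2$ are non-empty, so neither $\pi$ nor $\ol{\pi}$ equals $\iota$ and $G\not\simeq K_1$---but this is automatic and does not actually enter the argument, since \Cref{prop:perm_complement} is valid for every permutation.
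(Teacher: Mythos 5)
Your proposal is correct and follows essentially the same route as the paper's proof: both handle $k=1$ trivially and, for $k=2$, identify $G_{|2}$ with $\ol{G_{|1}}$ and invoke \Cref{prop:perm_complement} to obtain $\ol{\pi}$ as the second permutation under the same labeling $\ell$. The only cosmetic difference is that the paper additionally remarks that $\pi\neq\iota,\ol{\iota}$, which, as you correctly observe, is not needed for the argument.
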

\begin{proof}
  Let $G$ be a complete $k$-edge-colored graph with $k\in\{1,2\}$.  Furthermore,
  let $(G_{|1},\ell)$ be a simple permutation graph for $\pi$.  The case
  $k=1$ is trivial and, thus, assume that $G=(V,E_1,E_2)$ is 2-edge-colored.  In
  this case, $E_1$ and $E_2$ are both non-empty.  By
  \Cref{cor:indepentetSet_iota}, we obtain that $\pi\neq\iota$.  Moreover,
  $\pi$ cannot be $\ol\iota$ as otherwise $G$ would be complete
  $1$-edge-colored.  Since $(G_{|1},\ell)$ is a simple permutation graph for $\pi$, we
  have that $\{u,v\}\in E_1$ if and only if $\ell(u) > \ell(v)$ and
  $\pi^{-1}(\ell(u))<\pi^{-1}(\ell(v))$.
	
  By \Cref{prop:perm_complement}, $(\ol{G_{|1}},\ell)$ is a simple permutation
  graph of $\ol{\pi}$. Hence, we have that $\{u,v\} \in E(\ol{G_{|1}})$ if
  and only if $\ell(u) > \ell(v)$ and
  $\ol{\pi}^{-1}(\ell(u))<\ol{\pi}^{-1}(\ell(v))$.  Clearly, this is if and
  only if ${\pi}^{-1}(\ell(u))>{\pi}^{-1}(\ell(v))$ for all non-edge
  $\{u,v\}$ in $G_{|1}$, which completes the proof.
\end{proof}

By \Cref{cor:two_colored_one_label}, every simple permutation graph~$G$ that is
neither edgeless nor complete corresponds to a complete $2$-edge-colored
permutation graph by interpreting the non-edges of $G$ as edges with some
new color, see also \Cref{fig:permutationGraph}~(a) and
\Cref{fig:permutationGraph}~(b) for illustrative examples.  Note, however,
that \Cref{cor:two_colored_one_label} cannot easily be extended to
$k$-edge-colored (possibly non-complete) graphs where each monochromatic
subgraph is a simple permutation graph, cf.\ \Cref{fig:2vs3} for an
example.

\begin{figure}
  \centering
  \includegraphics[width=0.8\linewidth]{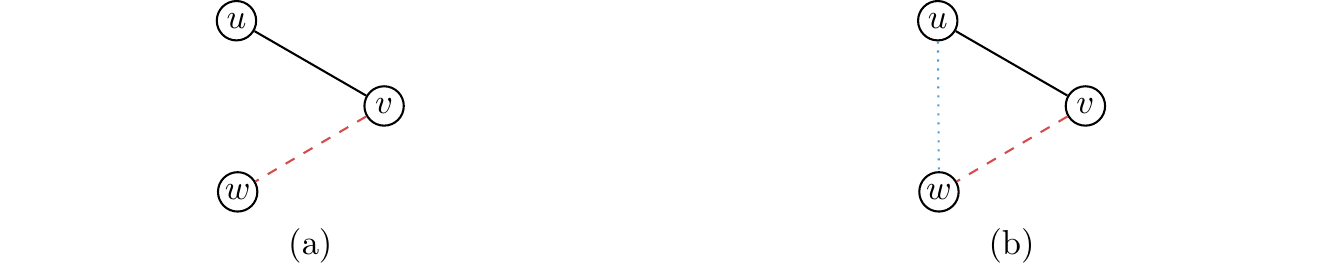}
  \caption{(a) A non-complete 2-edge-colored graph $G$ with color~$1$ (black
    solid edge) and color~2 (red dashed edge) is shown.  Each of the two
    monochromatic subgraphs $G_{|1}$ and $G_{|2}$ of $G$ is isomorphic to
    the graph consisting of three vertices, a single edge, and an isolated
    vertex.  Let us fix the labeling $\ell$ of $G$ by putting $\ell(u)=1$,
    $\ell(v)=2$ and $\ell(w)=3$. Now, it is easy to verify that
    $(G_{|1},\ell)$ and $(G_{|2},\ell)$ are simple permutation graphs for
    $\pi_1=(2,1,3)$ and $\pi_2=(1,3,2)$, respectively.  For both
    permutations, even the non-edge $\{u,w\}$ would be ``explained'', since
    $\ell(u)=1<\ell(w)=3$ and $\pi_i^{-1}(\ell(u))< \pi_i^{-1}(\ell(w))$,
    $i=1,2$. \newline We emphasize, however, that this is not the
    definition of edge-colored permutation graphs we use.  We explicitly
    consider here only \emph{complete} edge-colored graphs.  Hence, we may
    extend~$G$ to the complete 3-edge-colored graph $G'$ shown in~(b).  However,
    $G'$ is not a complete edge-colored permutation graph, since it
    contains a rainbow triangle (cf.\ \Cref{def:rainbowT} and
    \Cref{pro:characterization}).}
  \label{fig:2vs3}
\end{figure}

The following two lemmas show that complete edge-colored permutation graphs
contain forbidden substructures.

\begin{lemma}\label{lem:forbidden_subgraph}
  Let $G$ be a complete edge-colored graph, $\ell$ be a labeling of $G$,
  and $u,v,w$ be three distinct vertices of $G$ with
  $\ell(u)<\ell(v)<\ell(w)$. Suppose that the colors of edges $\{u,v\}$ and
  $\{v,w\}$ are equal and different from the color of edge $\{u,w\}$, then
  $(G,\ell)$ cannot be a complete edge-colored permutation graph.
\end{lemma}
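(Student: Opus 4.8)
The plan is to argue by contradiction. Suppose $(G,\ell)$ were a complete $k$-edge-colored permutation graph of permutations $\pi_1,\dots,\pi_k$, and set $i\coloneqq c(\{u,v\})=c(\{v,w\})$ and $j\coloneqq c(\{u,w\})$, so that $i\neq j$ by hypothesis. Write $p\coloneqq\ell(u)$, $q\coloneqq\ell(v)$, $r\coloneqq\ell(w)$, so that $p<q<r$. The underlying idea is that the defining biconditional of a permutation graph, applied to the $i$-th monochromatic subgraph $G_{|i}$, says precisely that for a pair of vertices the one carrying the larger label must be ``reversed'' (i.e.\ sent to an earlier position) by $\pi_i$; this relation is transitive along the chain $p<q<r$, which forces $\{u,w\}$ into $E_i$ as well — contradicting $c(\{u,w\})=j\neq i$.

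Concretely, I would first unpack the definition for each of the three edges. Since $\ell(u)<\ell(v)$, membership $\{u,v\}\in E_i$ is equivalent to $\pi_i^{-1}(q)<\pi_i^{-1}(p)$ (the larger label being $q$); likewise $\{v,w\}\in E_i$ is equivalent to $\pi_i^{-1}(r)<\pi_i^{-1}(q)$; and $\{u,w\}\in E_i$ is equivalent to $\pi_i^{-1}(r)<\pi_i^{-1}(p)$. From $\{u,v\},\{v,w\}\in E_i$ we then get $\pi_i^{-1}(r)<\pi_i^{-1}(q)<\pi_i^{-1}(p)$, hence $\pi_i^{-1}(r)<\pi_i^{-1}(p)$, so $\{u,w\}\in E_i$. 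But $G$ is complete and each edge carries exactly one color, so $c(\{u,w\})=j\neq i$ means $\{u,w\}\notin E_i$ — the desired contradiction. Hence no such $(G,\ell)$ can exist.

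The one point requiring care — and the only place the argument could go wrong — is the correct, symmetric reading of the biconditional in the definition of a simple permutation graph when $\ell(u)<\ell(v)$: the condition ``$\{u,v\}\in E\Longleftrightarrow \ell(u)>\ell(v)$ and $\pi^{-1}(\ell(u))<\pi^{-1}(\ell(v))$'' is to be understood as ``$\{u,v\}\in E$ iff, choosing $x,y\in\{u,v\}$ with $\ell(x)>\ell(y)$, one has $\pi^{-1}(\ell(x))<\pi^{-1}(\ell(y))$''. Once this is made explicit, the whole argument is a finite, direct calculation with inequalities; no induction and no appeal to the earlier structural results (modular decomposition, \Cref{lem:asc_desc_seq}, etc.) is needed. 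One may also remark in passing that the step ``$\pi_i^{-1}(r)<\pi_i^{-1}(q)$ and $\pi_i^{-1}(q)<\pi_i^{-1}(p)$ imply $\pi_i^{-1}(r)<\pi_i^{-1}(p)$'' is exactly transitivity, mirroring axiom (R2) of a strict total order applied to $\pi_i^{-1}$.
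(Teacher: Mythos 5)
Your proposal is correct and follows essentially the same argument as the paper: assume $(G,\ell)$ is a complete edge-colored permutation graph, unpack the defining biconditional for the two equally colored edges to get $\pi_i^{-1}(\ell(w))<\pi_i^{-1}(\ell(v))<\pi_i^{-1}(\ell(u))$, and conclude by transitivity that $\{u,w\}\in E_i$, contradicting $c(\{u,w\})\neq i$. Your explicit remark on the symmetric reading of the biconditional is a fair point of care, but it matches exactly how the paper uses the definition.
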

\begin{proof}
  Let $G$, $\ell$, and $u,v,w$ be as specified in the lemma. 
	Clearly, $G=(V,E_1,\dots,E_k)$ for some $k\geq 2$. 
	Assume that the edges $\{u,v\}$ and   $\{v,w\}$ have color $i\in \inv{1}{k}$
	and edge $\{u,w\}$ has color~$j\in \inv{1}{k}$, $j\neq i$.

  Assume, for contradiction, that $(G,\ell)$ is a complete $k$-edge-colored
  permutation graph of $\pi_1,\dots,\pi_k$. By assumption we have
  $\ell(u)<\ell(v)<\ell(w)$.  Hence, in order to realize the colors of
  edges $\{u,v\}$ and $\{v,w\}$ it must hold that
  $\pi_i^{-1}(\ell(u))> \pi_i^{-1}(\ell(v))$ and
  $\pi_i^{-1}(\ell(v))>\pi_i^{-1}(\ell(w))$, respectively.  Hence, we have
  $\pi_i^{-1}(\ell(u))> \pi_i^{-1}(\ell(w))$.  This together with
  $\ell(u)<\ell(w)$ implies that $\{u,w\}\in E_i$; a contradiction.
\end{proof}

\begin{lemma}\label{lem:rainbow_nonprime2}
  If a complete edge-colored graph $G=(V,E_1,\dots,E_k)$ contains a rainbow triangle, then 
  $G=(V,E_1,\dots,E_k)$ cannot be a complete edge-colored permutation graph.
\end{lemma}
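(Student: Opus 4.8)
The plan is to reduce the statement to the special case handled by \Cref{lem:forbidden_subgraph}, where the rainbow triangle has a very particular relationship to the labeling $\ell$. The key observation is that a rainbow triangle $\tri_{uvw}$ on vertices $u,v,w$ with, say, $\ell(u)<\ell(v)<\ell(w)$ always contains a ``middle'' vertex (here $v$) whose two incident edges in the triangle have two distinct colors, and the third edge $\{u,w\}$ has a third distinct color. First I would fix any labeling $\ell$ of $G$, relabel the three vertices of the rainbow triangle so that $\ell(u)<\ell(v)<\ell(w)$, and then examine the colors $c(\{u,v\})$, $c(\{v,w\})$, $c(\{u,w\})$, which are pairwise distinct by the definition of a rainbow triangle.

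Now suppose, for contradiction, that $(G,\ell)$ is a complete $k$-edge-colored permutation graph of $\pi_1,\dots,\pi_k$. Say $\{u,v\}\in E_a$, $\{v,w\}\in E_b$, and $\{u,w\}\in E_d$ with $a,b,d$ pairwise distinct. Since $\ell(u)<\ell(v)$ and $\{u,v\}\in E_a$, the defining condition forces $\pi_a^{-1}(\ell(u))>\pi_a^{-1}(\ell(v))$; likewise $\ell(v)<\ell(w)$ and $\{v,w\}\in E_b$ force $\pi_b^{-1}(\ell(v))>\pi_b^{-1}(\ell(w))$. I would then argue about the edge $\{u,w\}$: since $\ell(u)<\ell(w)$, whether or not $\{u,w\}$ lies in a given color class $E_c$ is determined entirely by the sign of $\pi_c^{-1}(\ell(u))-\pi_c^{-1}(\ell(w))$. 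The point is that $\{u,w\}$ must lie in \emph{some} color class among the $E_1,\dots,E_k$ (as $G$ is complete), and in particular it lies in neither $E_a$ nor $E_b$ since $d\neq a$ and $d\neq b$. But $\{u,w\}\notin E_a$ together with $\ell(u)<\ell(w)$ gives $\pi_a^{-1}(\ell(u))<\pi_a^{-1}(\ell(w))$, and similarly $\{u,w\}\notin E_b$ gives $\pi_b^{-1}(\ell(u))<\pi_b^{-1}(\ell(w))$.

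The contradiction then comes from combining these inequalities with the information we extracted about $\{u,v\}$ and $\{v,w\}$. There are two cases depending on where $\ell(u)$ sits relative to the other two in the color class $E_a$ and $E_b$ respectively; but in fact the cleanest route is to apply \Cref{lem:forbidden_subgraph} directly once we identify which of $u,v,w$ plays the role of the ``middle'' vertex whose incident triangle-edges share a color — except that in a rainbow triangle no two edges share a color, so \Cref{lem:forbidden_subgraph} does not apply verbatim. Hence I expect the actual argument to be the short self-contained inequality chase sketched above rather than a black-box appeal. Concretely: from $\pi_a^{-1}(\ell(u))>\pi_a^{-1}(\ell(v))$ and $\{u,w\}\notin E_a$, and from $\pi_b^{-1}(\ell(v))>\pi_b^{-1}(\ell(w))$ and $\{u,w\}\notin E_b$, one derives constraints on the relative $\pi_c^{-1}$-order of $\ell(u),\ell(v),\ell(w)$ in color classes $a$ and $b$ that, when intersected with ``$\{u,w\}$ lies in exactly one color class'', leave no consistent assignment for the color of $\{v,w\}$ relative to $\{u,v\}$ — the middle edge cannot simultaneously reverse one pair and not the other while the outer edge behaves as forced. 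The main obstacle is purely bookkeeping: organizing the three distinct colors $a,b,d$ and the six relevant values $\pi_a^{-1}(\ell(\cdot))$, $\pi_b^{-1}(\ell(\cdot))$, $\pi_d^{-1}(\ell(\cdot))$ so that the contradiction is visible in one or two lines; no genuine difficulty is expected beyond choosing the right two colors to focus on and noting that in a rainbow triangle the third edge's color is irrelevant to the contradiction — only the fact that it differs from both of the other two matters.
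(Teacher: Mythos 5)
Your setup is right, but you aim the inequality chase at the wrong pair of colors, and the inequalities you actually write down do not conflict with one another. With $\ell(u)<\ell(v)<\ell(w)$, $\{u,v\}\in E_a$, $\{v,w\}\in E_b$, $\{u,w\}\in E_d$, the complete set of constraints on $\pi_a$ is $\pi_a^{-1}(\ell(v))<\pi_a^{-1}(\ell(u))<\pi_a^{-1}(\ell(w))$ and on $\pi_b$ is $\pi_b^{-1}(\ell(u))<\pi_b^{-1}(\ell(w))<\pi_b^{-1}(\ell(v))$; both are realizable, \eg\ by $\pi_a=(2,1,3)$ and $\pi_b=(1,3,2)$ when $\ell(u)=1$, $\ell(v)=2$, $\ell(w)=3$ --- this is exactly the situation of \Cref{fig:2vs3}(a), where two permutations consistently explain the two colored edges \emph{and} the non-membership of $\{u,w\}$ in $E_a$ and $E_b$. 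So ``combining'' the color-$a$ and color-$b$ constraints, even together with $\{u,w\}\notin E_a\cup E_b$, leaves a consistent assignment and yields no contradiction; your claim that ``the third edge's color is irrelevant'' is precisely backwards.

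The contradiction lives entirely in the color $d$ of the edge joining the two label-extremes. Since $\{u,w\}\in E_d$ and $\ell(u)<\ell(w)$, one gets $\pi_d^{-1}(\ell(w))<\pi_d^{-1}(\ell(u))$; since $\{u,v\}\notin E_d$ and $\{v,w\}\notin E_d$ (this is where the rainbow hypothesis is used), one gets $\pi_d^{-1}(\ell(u))<\pi_d^{-1}(\ell(v))$ and $\pi_d^{-1}(\ell(v))<\pi_d^{-1}(\ell(w))$, and these three inequalities form a cycle. Equivalently --- and this is how the paper argues, with the roles of its two cases corresponding to the two possible positions of $\pi_d^{-1}(\ell(v))$ --- the single permutation that reverses the extreme pair $\{u,w\}$ must also reverse $v$ with at least one of $u,w$, so one of the other two triangle edges is forced to carry color $d$ as well, contradicting rainbowness. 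Once you redirect the argument at $\pi_d$ rather than at $\pi_a$ and $\pi_b$, the rest of your write-up goes through.
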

\begin{proof}
  Let $G=(V,E_1,\dots,E_k)$ be a complete $k$-edge-colored graph.  Since $G$
  contains, by assumption, a rainbow triangle, we have 
  $k\geq 3$ and thus, $|V|\geq 3$.
	
  Let $\tri_{uvw}$ be a rainbow triangle of $G$ with colors
  $\{i_1,i_2,i_3\} \subseteq \inv{1}{k}$.  Without loss of generality, we
  may assume that $\{u,w\}$, $\{u,v\}$ and $\{v,w\}$ have color $i_1$,
  $i_2$ and $i_3$, respectively.
  
  Assume, for contradiction, that $G$ together with some labeling
  $\ell \dis V \rightarrow \inv{1}{|V|}$ is a complete $k$-edge-colored
  permutation graph for some permutations $\pi_1,\dots,\pi_{k}$.  Observe
  that $\ell$ implies a strict order on $\{\ell(u),\ell(v),\ell(w)\}$, as
  $u$, $v$, and $w$ are all distinct. Without loss of generality, let this
  order be $\ell(u)>\ell(v)>\ell(w)$.

  Since $\{i_1,i_2,i_3\} \subseteq \inv{1}{k}$, it follows that
  $\pi_{i_1},\pi_{i_2},\pi_{i_3} \in \{\pi_1,\dots,\pi_k\}$.  Since the
  color of $\{u,w\}$ is $i_1$, it holds that $\ell(u)>\ell(w)$ and
  $\pi_{i_1}^{-1}(\ell(u)) < \pi_{i_1}^{-1}(\ell(w))$.  Now consider the
  vertices $v$ and $w$.  We either have (i)
  $\pi_{i_1}^{-1}(\ell(v)) < \pi_{i_1}^{-1}(\ell(w))$ or (ii)
  $\pi_{i_1}^{-1}(\ell(v)) > \pi_{i_1}^{-1}(\ell(w))$.  In Case (i), the
  edge $\{v,w\}$ would have color $i_1$ because $\ell(v)>\ell(w)$; a
  contradiction to the assumption that $\tri_{uvw}$ is a rainbow triangle.
  However, in Case (ii), we have
  $\pi_{i_1}^{-1}(\ell(u)) <
  \pi_{i_1}^{-1}(\ell(w))<\pi_{i_1}^{-1}(\ell(v))$.  This together with
  $\ell(u)>\ell(v)$ implies that the edge $\{u,v\}$ would have color~$i_1$.
  Again we derive a contradiction to the assumption that $\tri_{uvw}$ is a
  rainbow triangle.  Consequently, $(G,\ell)$ cannot be a complete
  $k$-edge-colored permutation graph for any labeling $\ell$ of $G$.  Hence,
  $(G,\ell)$ cannot be a complete edge-colored permutation graph at all.
\end{proof}

\section{Characterization of Complete Edge-Colored Permutation Graphs}\label{sec:chara}

We provide here the main results of this contribution.
\begin{theorem} \label{thm:characterization_all_k_PG} Suppose that $G$ is a
  complete edge-colored graph.  Then the
  following statements are equivalent:

\begin{itemize}
\item[(i)] $G$ is a complete edge-colored permutation graph, which
	is, by definition, if and only if there exists a labeling~$\ell$ of~$G$ such that for each monochromatic subgraph
	$G_{|i}$ of~$G$ it holds that $(G_{|i},\ell)$ is a simple permutation
	graph.
\item[(ii)] Each induced subgraph of $G$ is a complete edge-colored
  permutation graph, \ie, the property of being a complete edge-colored
  permutation graph is hereditary.
\item[(iii)] The quotient graph $G[M]/\Mmax[M]$ of each strong module $M$
  in the modular decomposition of $G$ is a complete edge-colored
  permutation graph
\item[(iv)] The quotient graph $G[M]/\Mmax[M]$ of each strong prime module
  $M$ in the modular decomposition of $G$ is a complete edge-colored
  permutation graph. In particular, the quotient graph of each strong prime
  module on at least three vertices must be $2$-edge-colored.
\item[(v)] Each monochromatic subgraph of $G$ is together with some
  labeling (possibly different from the labeling $\ell$ as chosen in Item (i)) 
	a simple permutation graph and
  $G$ does not contain a rainbow triangle.
\end{itemize}
\end{theorem}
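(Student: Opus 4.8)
The plan is to prove the cyclic chain $(i)\Rightarrow(ii)\Rightarrow(iii)\Rightarrow(iv)\Rightarrow(i)$ together with the two extra implications $(i)\Rightarrow(v)$ and $(v)\Rightarrow(iv)$, which jointly give the equivalence of all five statements; all but $(iv)\Rightarrow(i)$ are short. For $(i)\Rightarrow(ii)$ I would argue directly: given a labeling $\ell$ and permutations $\pi_1,\dots,\pi_k$ witnessing $(i)$ and a subset $W\subseteq V$, let $\ell'$ be the order-preserving standardization of $\ell|_W$ and let $\pi'_c$ be the standardization of the subsequence of $\pi_c$ formed by the entries lying in $\ell(W)$; since standardization preserves every relative order, the defining equivalence immediately yields that $(G[W]_{|c},\ell')$ is the simple permutation graph of $\pi'_c$ for each color $c$, and since $G[W]$ is complete, $(G[W],\ell')$ is then a complete edge-colored permutation graph. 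For $(ii)\Rightarrow(iii)$, \Cref{rem:subgraph} shows that for each strong module $M$ the quotient $G[M]/\Mmax(M)$ is isomorphic to an induced subgraph of $G$, which is a complete edge-colored permutation graph by $(ii)$; since an isomorphism of complete edge-colored graphs is a single vertex bijection carrying one labeling to another, this property is isomorphism-invariant and therefore passes to $G[M]/\Mmax(M)$. For $(iii)\Rightarrow(iv)$, a strong prime module is in particular a strong module; moreover, if $M$ is a strong prime module on at least three vertices then $Q:=G[M]/\Mmax(M)$ is primitive by \Cref{lem:PrimeImpliesPrimitive} and has at least three vertices (it has at least two children, and two would make $Q=K_2$, hence $M$ series), it is not $1$-edge-colored (since $M$ is not series), and it cannot have three or more colors, for then \Cref{lem:prime_rainbow} would produce a rainbow triangle, contradicting \Cref{lem:rainbow_nonprime2} and the fact that $Q$ is a complete edge-colored permutation graph; hence $Q$ is $2$-edge-colored. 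The implication $(i)\Rightarrow(v)$ is immediate: the monochromatic subgraphs are simple permutation graphs by the definition invoked in $(i)$, and $G$ has no rainbow triangle by \Cref{lem:rainbow_nonprime2}. For $(v)\Rightarrow(iv)$, if $M$ is a strong prime module, then $Q:=G[M]/\Mmax(M)$ is, by \Cref{rem:subgraph}, an induced subgraph of $G$, hence complete and rainbow-triangle-free, and each of its monochromatic subgraphs is an induced subgraph of a monochromatic subgraph of $G$, hence a simple permutation graph by \Cref{lem:simplePerm-hereditary}; since $Q$ is primitive (\Cref{lem:PrimeImpliesPrimitive}) and rainbow-triangle-free it is $2$-edge-colored exactly as above, and \Cref{cor:two_colored_one_label} applied to one of its two monochromatic subgraphs shows $Q$ to be a complete $2$-edge-colored permutation graph.

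For $(iv)\Rightarrow(i)$ --- the substantial step --- I would use induction on $|V|$, the case $|V|=1$ being trivial. Let $|V|\ge 2$ and $\Mmax(V)=\{M_1,\dots,M_n\}$ with $n\ge 2$. Since the modular decomposition tree of $G[M_j]$ is the subtree of $T_G$ rooted at $M_j$, each $G[M_j]$ again satisfies $(iv)$, so by the inductive hypothesis it carries a labeling $\ell_j$ with $(G[M_j]_{|c},\ell_j)$ the simple permutation graph of some permutation $\sigma_{j,c}$ for every color $c$ of $G$ --- with $\sigma_{j,c}=\iota$ if $c$ does not occur in $G[M_j]$, by \Cref{cor:indepentetSet_iota}. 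The quotient $Q:=G/\Mmax(V)$ is either $K_n$ (if $V$ is series, with a single cross-color $c_0$, realized by $\ol{\iota}$ via \Cref{cor:indepentetSet_iota}) or a complete $2$-edge-colored permutation graph (if $V$ is prime, by $(iv)$); in both cases fix a labeling $\ell_Q$ of $\Mmax(V)$ and, for each color $a$ occurring in $Q$, a permutation $\rho_a$ with $(Q_{|a},\ell_Q)$ the simple permutation graph of $\rho_a$.

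Reindexing so that $\ell_Q(M_j)=j$, set $\beta_j:=\sum_{j'<j}|M_{j'}|$ and define the labeling $\ell$ of $V$ by $\ell(v)=\beta_j+\ell_j(v)$ for $v\in M_j$, so that the blocks $B_j:=\{\beta_j+1,\dots,\beta_j+|M_j|\}$ carry the vertices of $M_j$ and are linearly ordered by index. For each color $c$ of $G$, let $\pi_c$ be the concatenation of the blocks, with each $B_j$ internally ordered in the way $\sigma_{j,c}$ orders $1,\dots,|M_j|$, the blocks being concatenated in increasing order of $j$ if $c$ does not occur in $Q$, and in the order $B_{\rho_c(1)},B_{\rho_c(2)},\dots,B_{\rho_c(n)}$ if $c$ occurs in $Q$. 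One then checks that $(G_{|c},\ell)$ is the simple permutation graph of $\pi_c$: two vertices in the same $M_j$ reproduce precisely the situation of $(G[M_j]_{|c},\ell_j)$; two vertices in distinct $M_i,M_j$ span an edge of $E_c$ exactly when the edge $\{M_i,M_j\}$ of $Q$ has color $c$, and --- since blocks of larger index carry larger $\ell$-values --- this is recorded faithfully by whether $\pi_c$ lists $B_i$ before or after $B_j$ (never, hence no $c$-edge, when $c$ does not occur in $Q$; as dictated by $(Q_{|c},\ell_Q)$ through $\rho_c$ when $c$ occurs in $Q$). Hence $(G,\ell)$ is a complete edge-colored permutation graph, which closes the induction. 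The delicate point --- and the step I expect to be most laborious --- is exactly this simultaneous construction: a single value-assignment $\ell$ obtained by concatenating the inductively produced module labelings, while within each color's permutation the position order is governed by that color's realization in the quotient (different across cross-colors, yet all compatible with the one $\ell$), together with the routine but somewhat tedious case analysis that verifies the defining equivalence in all four combinations of same/different module and internal/cross color.
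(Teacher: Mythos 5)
Your proposal is correct, and its logical skeleton --- the cycle $(i)\Rightarrow(ii)\Rightarrow(iii)\Rightarrow(iv)\Rightarrow(i)$ supplemented by $(i)\Rightarrow(v)\Rightarrow(iv)$ --- is exactly the ``alternative avenue'' the paper acknowledges after the theorem statement but deliberately avoids. The short implications coincide with the paper's: your standardization argument for $(i)\Rightarrow(ii)$ is \Cref{pro:i_iii}, the quotient-as-induced-subgraph step for $(ii)\Rightarrow(iii)$ is \Cref{lem:colored_perm_iff2}, the primitivity/rainbow argument for the $2$-coloring claim is \Cref{thm:k_colored_permgraph_iff-b}, and your $(v)\Rightarrow(iv)$ is essentially \Cref{lem:characterization-2nd-case} restricted to prime modules. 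Where you genuinely diverge is the hard direction $(iv)\Rightarrow(i)$: you induct on $|V|$ and build $(\ell,\pi_1,\dots,\pi_k)$ by inflating the quotient's permutations with the inductively obtained block permutations (blocks of $\ell$ ordered by $\ell_Q$, block order inside each $\pi_c$ governed by $\rho_c$, interior order by $\sigma_{j,c}$); your verification of the four same/different-module, internal/cross-color cases is sound. The paper instead works non-recursively: it defines one global order $\precdot$ on $V$ and, for each color $i$, an order $\llcurly_i$, both read off from the labelings of \emph{all} quotient graphs via lowest common ancestors in the modular decomposition tree, proves transitivity by a case analysis on the three pairwise LCAs (\Cref{lem:order1} and \Cref{lem:order2}), and extracts the permutations from the $\llcurly_i$. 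The two constructions produce the same objects --- $\ell_{\precdot}$ is your concatenated labeling unrolled over the whole tree --- but your recursive formulation is shorter and keeps each verification local to one level of the tree, whereas the paper's global formulation is precisely what it later exploits to obtain the $\Mc{O}(|V|^2)$ recognition algorithm without recursion. The one point you leave implicit is the standard fact that for a strong module $M_j$ the modular decomposition of $G[M_j]$ is the subtree of $T_G$ rooted at $M_j$, which you need in order to pass hypothesis $(iv)$ down to $G[M_j]$; this is folklore from the $2$-structure literature the paper cites, but it should be stated.
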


It is easy to see that, Item \emph{(ii)} implies Item \emph{(i)}
    (since $G$ is an induced subgraph of itself) and Item \emph{(iii)} implies 
		the first part of Item \emph{(iv)}.

In what follows, we prove \Cref{thm:characterization_all_k_PG} and verify
the individual items of \Cref{thm:characterization_all_k_PG} in
\Crefrange{section:item3}{sec:mono}.  
In particular, we show that all Items~\emph{(ii)} - \emph{(v)} are equivalent to Item \emph{(i)}. 
The reader might have already observed that there is an alternative 
avenue to show the equivalence between Items~\emph{(i)}, \emph{(iii)}, and \emph{(iv)}, 
since  Item \emph{(iii)} trivially implies the first part of Item \emph{(iv)}. 
This observation suggests to prove the implication ``$(i)\Rightarrow (iii)$''
and ``$(iv)\Rightarrow (i)$''. Nevertheless, we treat the equivalence 
between Items~\emph{(i)} and \emph{(iii)} a well as \emph{(i)} and \emph{(iv)} in separate steps.
The reason is simple: the proof of implication ``$(iii)\Rightarrow(i)$''
is constructive and used to design our efficient recognition 
algorithm in \Cref{sec:recognition}. In particular,
this result allows us to provide a simple proof to show that $(iv)$ implies $(i)$.

For the sake of completeness,
\Cref{section:mainproof} summarizes all results to prove
\Cref{thm:characterization_all_k_PG}.

\subsection{An Induced Subgraph Characterization}\label{section:item3}

In this section, we show that the property of being a complete edge-colored
permutation graph is hereditary and that complete edge-colored permutation
graphs are characterized by this property. That is, we show the equivalence of \Cref{thm:characterization_all_k_PG}~(i) 
and \Cref{thm:characterization_all_k_PG}~(ii).

Clearly, it is not hard to see that \Cref{thm:characterization_all_k_PG}~(ii) trivially implies \Cref{thm:characterization_all_k_PG}~(i). 
However, for the sake of completeness, we summarize this result in the following

\begin{lemma}\label{cor:i_iii-b}
	If every induced subgraph of a graph $G$ is a complete edge-colored permutation graph, then $G$ is a complete edge-colored permutation graph. 
\end{lemma}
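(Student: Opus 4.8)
The plan is simply to invoke the hypothesis on the trivial induced subgraph $G[V]$. Recall from the definitions in \Cref{sec:prems} that for a graph $G=(V,E)$ and the choice $W=V$ we obtain $G[W]=G$; thus $G$ is an induced subgraph of itself. Consequently, if every induced subgraph of $G$ is a complete edge-colored permutation graph, then this in particular applies to $G[V]=G$, which yields the claim immediately. This is exactly the observation already flagged after the statement of \Cref{thm:characterization_all_k_PG}, namely that Item~(ii) trivially implies Item~(i).

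The only point that warrants a sentence of care is the convention on induced subgraphs and on edge-colorings: one must confirm that $G$ itself falls under the scope of the quantifier ``every induced subgraph,'' and that the labeling and $k$ permutations provided for $G[V]$ by the hypothesis are precisely a valid witness for $G$ being a complete edge-colored permutation graph. Both are immediate from the definitions, since $G[V]$ retains the coloring of $G$ and has the same vertex set. Hence there is no real obstacle here; the lemma is recorded only so that the equivalence chain in \Cref{thm:characterization_all_k_PG} is stated completely and self-contained. The substantive content lies entirely in the converse direction (that (i) implies (ii), i.e.\ hereditariness), which will be handled separately via the modular-decomposition characterization and the forbidden-substructure lemmas of \Cref{sec:permG}.
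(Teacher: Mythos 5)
Your proposal is correct and matches the paper's own proof exactly: both simply observe that $G[V]=G$ is an induced subgraph of itself, so the hypothesis applies directly to $G$. No further comment is needed.
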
  
\begin{proof}
	The claim is a consequence of the simple fact that $G=(V,E)$ and the induced subgraph $G[V]$ are identical.
\end{proof}

We now show that the converse of \Cref{cor:i_iii-b} is valid as well.
\begin{proposition}\label{pro:i_iii}
  If $G=(V,E)$ is a complete edge-colored permutation graph, then, for all
  non-empty subsets $M\subseteq V$, the induced subgraph $G[M]$ is a
  complete edge-colored permutation graph.  Hence, the property of being a
  complete edge-colored permutation graph is hereditary.
\end{proposition}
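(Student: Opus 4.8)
The plan is to take the labeling and permutations that witness $G$ being a complete edge-colored permutation graph and to \emph{restrict-and-renormalize} them to $M$. By definition there is a labeling $\ell\dis V\to\inv{1}{|V|}$ and permutations $\pi_1,\dots,\pi_k$ so that $(G_{|i},\ell)$ is the simple permutation graph of $\pi_i$ for every color $i$. Since $G$ is complete, $G[M]$ is complete as well, and every edge of $G[M]$ keeps the color it had in $G$; hence for $u,v\in M$ we have $\{u,v\}\in E_i$ if and only if $\{u,v\}$ is an edge of color $i$ in $G[M]$.

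First I would fix the order-preserving renumbering $\sigma\dis S\to\inv{1}{|M|}$ of $S\coloneqq \ell(M)$ (a set of exactly $|M|$ integers, as $\ell$ is injective), and set $\ell'\coloneqq\sigma\circ(\ell|_M)$, which is a bijection $M\to\inv{1}{|M|}$ and hence a labeling of $G[M]$ with $\ell(u)>\ell(v)\Leftrightarrow\ell'(u)>\ell'(v)$ for all $u,v\in M$. Then, for each color $i$, I would let $(s_1,\dots,s_{|M|})$ be the subsequence of $\pi_i=(\pi_i(1),\dots,\pi_i(|V|))$ obtained by deleting every entry not in $S$, and put $\pi_i'\coloneqq(\sigma(s_1),\dots,\sigma(s_{|M|}))\in\Pc{|M|}$. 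The crucial point is that deletion preserves relative order: for $x,y\in S$, $x$ precedes $y$ in $(s_1,\dots,s_{|M|})$ iff $\pi_i^{-1}(x)<\pi_i^{-1}(y)$, and by construction $(\pi_i')^{-1}(\sigma(x))$ is exactly the position of $x$ in $(s_1,\dots,s_{|M|})$; therefore $\pi_i^{-1}(x)<\pi_i^{-1}(y)\Leftrightarrow(\pi_i')^{-1}(\sigma(x))<(\pi_i')^{-1}(\sigma(y))$.

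Combining the two equivalences with $x=\ell(u)$ and $y=\ell(v)$ shows that, for every color $i$ and all $u,v\in M$, $\{u,v\}\in E_i$ iff $\ell'(u)>\ell'(v)$ and $(\pi_i')^{-1}(\ell'(u))<(\pi_i')^{-1}(\ell'(v))$, i.e., $(G[M]_{|i},\ell')$ is the simple permutation graph of $\pi_i'$. Since $G[M]$ is complete, this proves that $G[M]$ is a complete edge-colored permutation graph (of the permutations $\pi_i'$ ranging over the colors actually occurring in $G[M]$), and hereditarity follows because $M$ was an arbitrary non-empty subset. I do not anticipate a real obstacle; the only step requiring attention is the bookkeeping for the permutations — verifying that deleting entries of the sequence $\pi_i$ and renumbering the survivors in an order-preserving way leaves all the position comparisons (equivalently, all the comparisons of inverse-permutation values) intact, which is precisely the data that Item~(i) of the definition tests against the labeling.
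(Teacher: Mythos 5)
Your proof is correct and is essentially the paper's own argument: restricting the labeling to $M$ and renumbering order-preservingly is the paper's $\hat\ell=\breve\ell\circ\ell$, and your ``delete entries of $\pi_i$ outside $S$ and renumber the survivors'' is exactly the paper's $\hat\pi_i=\breve\ell\circ\sigma_i\circ\theta^{\Ps}_i$, with the same order-preservation check doing the work. The only difference is presentational (subsequences versus explicitly composed bijections), and you correctly handle the drop to the colors actually occurring in $G[M]$.
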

\begin{proof}
  Let $G=(V,E_1,\dots,E_k)$ together with a labeling $\ell$ be a complete
  $k$-edge-colored permutation graph for the permutations
  $\pi_1,\dots,\pi_k$.  The cases $|M|=1$ as well as $k=1$ are trivial and
  thus, we assume that $m=|M|>1$ as well as $k>1$.
	
  Now, we utilize $\ell$ and $\pi_1,\dots ,\pi_k$ to construct a labeling
  $\hat{\ell}$ and permutations $\hat{\pi}_1,\dots ,\hat{\pi}_{k}$ such
  that $(G[M],\hat{\ell})$ is a complete $k'$-edge-colored permutation graph for
  $k'\leq k$ permutations from $\{\hat{\pi}_1,\dots ,\hat{\pi}_{k}\}$.

  First, let $\Ms\coloneqq\{\,\ell(v)\mid v\in M\,\}$ be the set of all
  labels assigned to the vertices in $M$. Moreover, let
  $\Ps_i\coloneqq \{\pi_i^{-1}(\ell(v))\mid v\in M\,\}$ for all
  $i\in \inv{1}{k}$.  Now, for every $i\in \inv{1}{k}$ we obtain the
  well-defined bijective map
  \begin{align*}
    \sigma_i \colon  \mathsf \Ps_i&\to \Ms \\ 
    j&\mapsto \pi_i(j). 
  \end{align*}
  We now relabel the labels in $\Ms$ by defining the map
  $\breve{\ell}\colon\Ms \rightarrow \inv{1}{m}$ via
  $\breve{\ell}(i) < \breve{\ell}(j)\Leftrightarrow i<j$ for all distinct
  $i,j\in \Ms$.  We use $\breve{\ell}$ to define a labeling $\hat \ell$ of
  $G[M]$ as
  \begin{align*}
    \hat \ell \colon  M &\to \inv{1}{m}\\ 
    j&\mapsto \breve{\ell}(\ell(j)).
  \end{align*}
  Note that $\hat \ell$ is bijective and well-defined.

  In order to transform each $\sigma_i$ to a permutation
  $\hat{\pi}_i\colon \inv{1}{m} \to \inv{1}{m}$, we additionally provide
  bijective maps $\tP_1,\dots \tP_k$ by defining for all $r\in \inv{1}{k}$
  the map $\tP_r\colon \inv{1}{m} \rightarrow \Ps_r$ as
  $\tP_r(i) < \tP_r(j)\Leftrightarrow i<j$ for all distinct $i,j\in \Ps_r$.
  Note that $\breve{\ell}$ as well as $\tP_1,\dots, \tP_k$ are unique and
  well-defined since $\Ms$ as well as $\Ps_1,\dots, \Ps_k$ are totally
  ordered sets on~$m$ elements.
	
  \begin{figure}[t]
    \centering \includegraphics[width=0.9\linewidth]{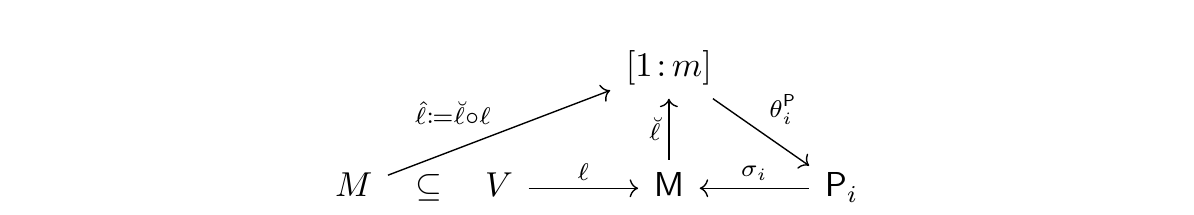}
    \caption{Diagram showing the maps
      $\sigma_i, \ell, \breve{\ell}, \hat \ell$, and $\tP_i$ as used in the
      proof of~\Cref{pro:i_iii}.}
    \label{fig:proof_prop_5}
  \end{figure}
  See \Cref{fig:proof_prop_5} for an illustration of the maps
  $\sigma_i, \breve{\ell}, \hat\ell$, and $\tP_i$ that are used in this
  proof.
	
  To derive the aforementioned permutations
  $\hat{\pi}_i\colon \inv{1}{m} \to \inv{1}{m}$, we simply define
  $\hat{\pi}_i$ as
  $\hat{\pi}_i\coloneqq(\breve{\ell}(\sigma_i(\tP_i(1))),\dots
  ,\breve{\ell}(\sigma_i(\tP_i(m))))$ for all $i\in\inv{1}{k}$. In other
  words, for all $j\in \inv{1}{m}$ and all $i\in\inv{1}{k}$ we have
  $\hat{\pi}_i(j) = \breve{\ell}(\sigma_i(\tP_i(j)))$.
			
  Now $G[M]$ is a complete graph that retains the colors of the edges as
  provided in $G$.  However, observe that $G[M]$ may have less than~$k$
  colors. Let $I=\{i_1,\dots,i_{k'}\} \subseteq \inv{1}{k}$ be the set of
  all colors on edges in $G[M]$.  Hence,
  $G[M]=(M,\hat{E}_{i_1},\ldots,\hat{E}_{i_{k'}})$ is a complete edge-colored
  graph.
	
  We continue to show that $(G[M],\hat\ell)$ is a complete $|I|$-edge-colored
  permutation graph for $\hat{\pi}_i$, $i\in I$.  To this end, let
  $u,v \in M$ be two vertices and without loss of generality assume that
  $\ell(u)>\ell(v)$.  Since we have not changed the relative order of the
  labels in $\Ms$ under $\breve \ell$, we have
  $\breve \ell(\ell(u) ) > \breve \ell(\ell(v))$ which implies
  $\hat \ell (u) > \hat \ell (v)$.
	
  First assume that $\{u,v\}\in \hat{E}_i$ for some $i\in I$.  Since $G[M]$
  is an induced subgraph of $G$ it holds that $\{u,v\} \in E_i$.  Since
  $\ell(u) > \ell(v)$ and $\{u,v\}\in E_i$ we have
  $\pi^{-1}_i(\ell(u)) < \pi^{-1}_i(\ell(v))$.  Since
  $\hat \ell(u) > \hat \ell(v)$, we must show that
  $\hat \pi^{-1}_i(\hat \ell(u)) < \hat\pi^{-1}_i(\hat \ell(v))$.
		
  For better readability, we put $f\coloneqq (\tP_i)^{-1}$.  By
  construction, $\hat \pi_i = \breve{\ell}\circ \sigma_i\circ \tP_i$ and
  thus, $\hat \pi^{-1}_i = f\circ \sigma_i^{-1}\circ \breve{\ell}^{-1}$.  Now
  let us examine $\hat \pi^{-1}_i(\hat\ell(x))$ for some $x\in M$.  First,
  $\breve \ell^{-1}(\hat \ell(x))=\breve \ell^{-1}(\breve \ell(\ell(x))) =
  \ell(x)$.  Hence, $\sigma_i^{-1}(\breve \ell^{-1}(\hat \ell(x)))$ reduces
  to $\sigma_i^{-1}(\ell(x)) = \pi_i^{-1}(\ell(x))$.
		
  Hence, $\hat \pi^{-1}_i(\hat \ell(u)) = f(\pi_i^{-1}(\ell(u)))$ and
  $\hat \pi^{-1}_i(\hat \ell(v)) = f(\pi_i^{-1}(\ell(v)))$.  Note that the
  order $\pi^{-1}_i(\ell(u)) < \pi^{-1}_i(\ell(v))$ is, by construction,
  preserved under $ f$, that is,
  \begin{equation*}
    \hat \pi^{-1}_i(\hat \ell(u)) = f(\pi_i^{-1}(\ell(u)))<
    f(\pi_i^{-1}(\ell(v))) = \hat \pi^{-1}_i(\hat \ell(v)).
  \end{equation*}
  If $\{u,v\}\notin \hat{E}_i$, we can use an analogous argumentation to
  conclude that
  $\hat \pi^{-1}_i(\hat \ell(u)) > \hat\pi^{-1}_i(\hat \ell(v))$.
		
  In summary, we have shown that $\{u,v\}\in \hat{E}_i$ with
  $\hat \ell(u) > \hat \ell(v)$ if and only if
  $\hat \pi^{-1}_i(\hat \ell(u)) < \hat\pi^{-1}_i(\hat \ell(v))$.  Hence,
  $(G[M],\hat\ell)$ is a complete $|I|$-edge-colored permutation graph for
  $\hat{\pi}_i$, $i\in I$.
\end{proof}

\subsection{A Characterization in Terms of Quotient Graphs} \label{sec:strongM}
	 
In this section, we study the quotient graphs of strong modules of complete
edge-colored permutation graphs. In particular, we show that a graph is a
complete edge-colored permutation graph if and only if the quotient graph
of each strong module is a complete edge-colored permutation graph. 
In other words, we show the equivalence of \Cref{thm:characterization_all_k_PG}~(i) and \Cref{thm:characterization_all_k_PG}~(iii).

\begin{proposition}\label{lem:colored_perm_iff2}
  Let $G$ be a complete edge-colored permutation graph. Then, the quotient
  graph $G[M]/\Mmax(M)$ of each strong module $M$ in the modular
  decomposition of $G$ is a complete edge-colored permutation graph.
\end{proposition}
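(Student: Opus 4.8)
The plan is to prove Proposition~\ref{lem:colored_perm_iff2} by combining the hereditary property established in Proposition~\ref{pro:i_iii} with Remark~\ref{rem:subgraph}, which identifies the quotient graph $G[M]/\Mmax(M)$ with a specific induced subgraph of $G$. Concretely, let $G$ be a complete edge-colored permutation graph and let $M$ be a strong module in the modular decomposition of $G$, with child modules $\Mmax(M)=\{M_1,\dots,M_\ell\}$. By Remark~\ref{rem:subgraph}, if we pick a set $N\subseteq M$ containing exactly one vertex $v_i$ from each $M_i$, then the induced subgraph $G[N]$ is isomorphic to $G[M]/\Mmax(M)$, and since $G$ is complete, both $G[N]$ and $G[M]/\Mmax(M)$ are complete graphs. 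So it suffices to argue that $G[N]$ is a complete edge-colored permutation graph, and then transfer this property across the isomorphism.

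First I would invoke Proposition~\ref{pro:i_iii}: since $G$ is a complete edge-colored permutation graph and $N\subseteq V(G)$ is non-empty, the induced subgraph $G[N]$ is itself a complete edge-colored permutation graph. Next I would note that being a complete edge-colored permutation graph is preserved under isomorphism of edge-colored graphs (in the sense defined in the preliminaries): if $(G[N],\ell_N)$ realizes permutations $\pi_1,\dots,\pi_{k'}$ via a labeling $\ell_N$, and $\phi\colon N\to \Mmax(M)$ is the isomorphism of Remark~\ref{rem:subgraph} together with a color bijection, then $\ell'\coloneqq \ell_N\circ\phi^{-1}$ is a labeling of $\Mmax(M)=V(G[M]/\Mmax(M))$ and, because $\phi$ maps each monochromatic subgraph of $G[N]$ to the correspondingly-colored monochromatic subgraph of $G[M]/\Mmax(M)$, each $((G[M]/\Mmax(M))_{|i},\ell')$ is a simple permutation graph of the same $\pi_i$. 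Hence $G[M]/\Mmax(M)$ is a complete edge-colored permutation graph. Since $M$ was an arbitrary strong module, the proposition follows. (One should also note the trivial boundary cases: if $M$ is a singleton or a leaf-near module so that $G[M]/\Mmax(M)\simeq K_1$ or is edgeless/complete, the claim holds immediately.)

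I do not expect a serious obstacle here; the only point that needs a little care is making explicit that "complete edge-colored permutation graph" is an isomorphism-invariant notion, since the definition is phrased in terms of an arbitrary labeling $\ell$ and permutations $\pi_i$ — but a labeling can be pushed forward along any graph isomorphism, and the defining biconditional $\{u,v\}\in E_i \Leftrightarrow \ell(u)>\ell(v) \text{ and } \pi_i^{-1}(\ell(u))<\pi_i^{-1}(\ell(v))$ is preserved verbatim under relabeling. The heavy lifting — constructing the labeling and permutations for an induced subgraph — has already been done in Proposition~\ref{pro:i_iii}, so this proof is essentially a short reduction to that result via Remark~\ref{rem:subgraph}.
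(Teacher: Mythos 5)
Your proposal is correct and follows essentially the same route as the paper's own proof: pick one representative vertex per child module to get $N$, apply Proposition~\ref{pro:i_iii} to $G[N]$, and transfer the property to $G[M]/\Mmax(M)$ via the isomorphism of Remark~\ref{rem:subgraph}. Your extra remark that the defining biconditional is invariant under pushing the labeling forward along an isomorphism is a point the paper leaves implicit, but it is the same argument.
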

\begin{proof}
  Let $G=(V,E)$ be a complete edge-colored permutation graph and let
  $M \subseteq V$ be a strong module of $G$.
	
  If $M=\{v\}$ for some $v\in V$, then $G[M]/\Mmax(M)$ is the single vertex
  graph that is, by definition, a complete edge-colored permutation graph.
		
  Now assume that $|M|>1$ and thus, $\Mmax(M) = \{M_1, \ldots , M_m\}$,
  $m>1$.  Let $N\subseteq M$ be such that $N$ contains exactly one vertex
  from every strong module $M_i\in \Mmax(M)$.
  
  Since $G[N]$ is an induced subgraph of the complete edge-colored
  permutation graph $G$, it follows by \Cref{pro:i_iii} that $G[N]$ is a
  complete edge-colored permutation graph.  By \Cref{rem:subgraph}, $G[N]$
  is isomorphic to $G[M]/\Mmax(M)$. Hence, the statement immediately
  follows for the quotient graph $G[M]/\Mmax(M)$.
\end{proof}

\begin{figure}
  \centering \includegraphics[width=0.85\linewidth]{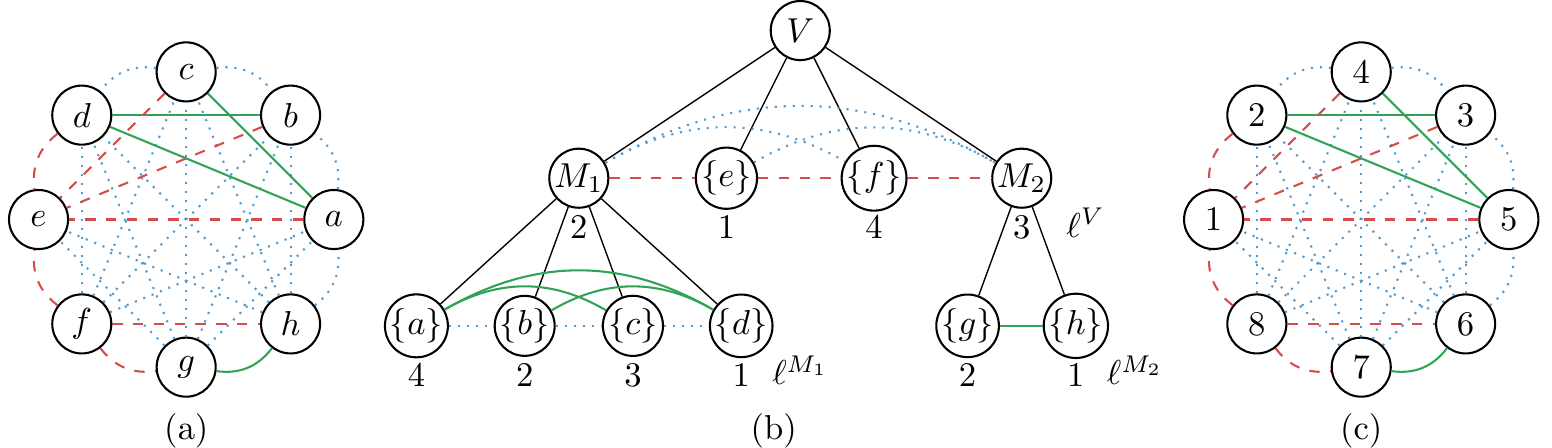}
  \caption{A complete $3$-edge-colored graph~$G$ without a specified labeling
    (Panel~(a)).  The modular decomposition tree $T_G$ of~$G$ together with
    the quotient graph for each strong module (Panel~(b)); and~$G$ together
    with labeling $\ell_{\precdot}$ such that $(G,\ell_{\precdot})$ is
    complete $3$-edge-colored permutation graph of $\pi_1 = (1,3,5,2,4,7,6,8)$,
    $\pi_2 = (6,7,1,8,4,2,5,3)$, and $\pi_3=(2,3,4,5,8,1,6,7)$
    (Panel~(c)). \newline Let us denote the colors green-lined, blue-dotted
    and red-dashed by $1$, $2$ and $3$, respectively.  Using the strict
    total order $\precdot$ defined on the labeling $\ell^M$ of the quotient
    graphs $G[M]/\Mmax(M)$ (as specified with the integers written below
    the vertices in $T_G$), we obtain
    $e \precdot d \precdot b \precdot c \precdot a \precdot h \precdot g
    \precdot f$.  The resulting labeling $\ell_{\precdot}$ is shown in
    Panel~(c). \newline The strict total orders for each color
    $\llcurly_1$, $\llcurly_2$ and $\llcurly_3$ defined on
    $\ell_{\precdot}$ are
    $e \llcurly_1 b \llcurly_1 a \llcurly_1 d \llcurly_1 c \llcurly_1 g
    \llcurly_1 h \llcurly_1 f$;
    $h \llcurly_2 g \llcurly_2 e \llcurly_2 f \llcurly_2 c \llcurly_2 d
    \llcurly_2 a \llcurly_2 b$; and
    $ d \llcurly_3 b \llcurly_3 c \llcurly_3 a \llcurly_3 f \llcurly_3 e
    \llcurly_3 h \llcurly_3 g$.  Following the proof of
    \Cref{lem:colored_perm_iff2-b}, we obtain $\pi_1, \pi_2$ and $\pi_3$
    from $\llcurly_1$, $\llcurly_2$ and $\llcurly_3$, respectively.  }
  \label{fig:working_example}
\end{figure}

In what follows we show that the converse of \Cref{lem:colored_perm_iff2}
is valid as well.  To this end, we first specify an ordering $\precdot$ of
the vertices of $G$ based on the labelings of the quotient graphs of strong
modules of $G$ (cf.\ \Cref{def:order1}). The ordering~$\precdot$ can then
be used to establish a specific labeling $\ell_{\precdot}$ of $G$. In order
to show that $(G,\ell_{\precdot})$ is a complete edge-colored permutation
graph we need to provide the respective permutations.  We construct these
permutations by employing additional orderings $\llcurly_i$ of $V$ for each
of the individual colors~$i$ of $G$ (cf.\ \Cref{def:order2}).  See also
\Cref{fig:working_example} for an illustrative example.
		 
However, to establish these results we first need some further notation.
Let $G$ be a graph with vertex set $V$ and $T_G$ be the modular
decomposition tree of~$G$.  For $u,v\in V$, we denote with $M^{u,v}$ the
inclusion-minimal strong module of $G$ that contains $u$ and $v$.  Note
that if $u=v$, then we have $M^{u,u} = \{u\}$.  Let us suppose that $u$ and
$v$ are distinct, in which case $\{u\},\{v\}, M^{u,v} \in \Mods[str]{G}$.
By definition, $\{u\}$ and $\{v\}$ are leaves of $T_G$ and $M^{u,v}$ is an
inner vertex of $T_G$.  Note, the module $M^{u,v}$ is the common ancestor
of $\{u\}$ and $\{v\}$ that is located farthest from the root of~$T_G$,
also known as lowest common ancestor~\cite{Aho1976,Harel1984}.  Since $T_G$
is a tree, it is not hard to see that $M^{u,v}$ is uniquely defined.  By
construction there exist (unique) distinct strong modules
$M^{u,v}_u, M^{u,v}_v \in \Mmax(M^{u,v})$ with $u \in M^{u,v}_u$ and
$v \in M^{u,v}_v$.  Note that $M^{u,v}_u$ and $M^{u,v}_v$ are the child
modules of $M^{u,v}$ in $T_G$ that have $\{u\}$ and $\{v\}$ as descendant,
respectively.  Since $T_G$ is a tree, it follows for all distinct
$u,v,w \in V$ that $1\leq|\{M^{u,v}, M^{v,w}, M^{u,w}\}|\leq 2$.

\begin{definition}\label{def:order1}
  Let $G=(V,E)$ be a complete edge-colored graph. Moreover, for all
  $M\in \Mods[str]{G}$ let $\ell^M$ denote a labeling of the quotient graph
  $G[M]/\Mmax(M)$.  Then, $\precdot$ denotes the binary relation on $V$
  that satisfies for all distinct $u,v\in V$
  \begin{equation*}
    u \precdot v \Leftrightarrow \ell^{M^{u,v}}(M^{u,v}_u) <
    \ell^{M^{u,v}}(M^{u,v}_v).
  \end{equation*}
  Moreover, let $\ell_{\precdot}$ be the map
  $\ell_{\precdot}\dis V \rightarrow \inv{1}{|V|}$ that satisfies for all
  distinct $u,v\in V$
  \begin{equation*}
    \ell_{\precdot}(u) < \ell_{\precdot}(v) \Leftrightarrow u \precdot v.
  \end{equation*}		
\end{definition}
See \Cref{fig:working_example} for an illustrative example of $\precdot$
and $\ell_{\precdot}$.

\begin{lemma}\label{lem:order1}
  Let $G$ be a complete edge-colored graph with vertex set $V$.  Then, the
  binary relation $\precdot$ is a strict total order on~$V$ and
  $\ell_{\precdot}$ a labeling of $G$.
\end{lemma}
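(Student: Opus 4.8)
The plan is to check the two axioms (R1) and (R2) of a strict total order directly for $\precdot$; once that is done, $\ell_{\precdot}$ is forced to be the unique order isomorphism from $(V,\precdot)$ onto $(\inv{1}{|V|},<)$, so in particular it exists, is a bijection, and is a labeling of $G$. For (R1), I would fix distinct $u,v\in V$. Then $M^{u,v}$ is well defined with two distinct child modules $M^{u,v}_u$ and $M^{u,v}_v$, and $\ell^{M^{u,v}}$ is a labeling of $G[M^{u,v}]/\Mmax(M^{u,v})$, hence injective; so exactly one of $\ell^{M^{u,v}}(M^{u,v}_u)<\ell^{M^{u,v}}(M^{u,v}_v)$ and $\ell^{M^{u,v}}(M^{u,v}_v)<\ell^{M^{u,v}}(M^{u,v}_u)$ holds. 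Since $M^{u,v}=M^{v,u}$ and the child modules match up accordingly, this says that exactly one of $u\precdot v$ and $v\precdot u$ holds; thus $\precdot$ is irreflexive and asymmetric and relates every pair of distinct vertices.

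The substantial step is transitivity (R2). Given $u\precdot v$ and $v\precdot w$, I would first note that $u,v,w$ are pairwise distinct ($u=w$ would contradict the asymmetry just shown), and then exploit the structure of $T_G$: by the remark preceding Definition~\ref{def:order1}, $\{M^{u,v},M^{v,w},M^{u,w}\}$ has one or two elements; if it has two, then two of these lowest common ancestors equal some module $M$ and the third is a proper descendant of $M$ (its two leaves both lie below $M$). I also use the elementary fact that two distinct leaves below $M$ have $M$ as their lowest common ancestor exactly when they lie in distinct children of $M$. Then: if all three coincide with some $M$, the vertices $u,v,w$ lie in three pairwise distinct children $M_u,M_v,M_w$ of $M$, the hypotheses read $\ell^M(M_u)<\ell^M(M_v)$ and $\ell^M(M_v)<\ell^M(M_w)$, and $\ell^M(M_u)<\ell^M(M_w)$ gives $u\precdot w$. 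If exactly two coincide with some $M$, one of $u,v,w$ lies alone in a child $M_x$ of $M$ while the other two share a child $M'$ of $M$; if $u$ is the solitary one then both $u\precdot v$ and $u\precdot w$ are the single inequality $\ell^M(M_u)<\ell^M(M')$, and if $w$ is solitary then both $v\precdot w$ and $u\precdot w$ are $\ell^M(M')<\ell^M(M_w)$, so in both cases $u\precdot w$ holds; if $v$ is solitary the hypotheses become $\ell^M(M')<\ell^M(M_v)$ and $\ell^M(M_v)<\ell^M(M')$, which is impossible, so this sub-case cannot occur. Hence $u\precdot w$ in every admissible case, and $\precdot$ is transitive.

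I expect the only real obstacle to be the bookkeeping: in each sub-case one has to correctly identify, for each of $u,v,w$, which child of the relevant module $M$ contains it, and match that against the terms $M^{x,y}_x$ and $M^{x,y}_y$ in the definition of $\precdot$ — this is where a careless argument slips. A good way to organize this, and a sanity check, is to observe that $\precdot$ is precisely the left-to-right leaf ordering of the ordered tree obtained from $T_G$ by ordering the children of each internal node $M$ by $\ell^M$; that such a leaf ordering is a strict total order is classical, and the case analysis above merely unpacks it. Having established that $\precdot$ is a strict total order, the assertion about $\ell_{\precdot}$ follows at once, as noted at the start.
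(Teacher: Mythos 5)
Your proposal is correct and follows essentially the same route as the paper: trichotomy from the uniqueness of $M^{u,v}$, $M^{u,v}_u$, $M^{u,v}_v$ and the injectivity of $\ell^{M^{u,v}}$, and transitivity by the same case split on how the three lowest common ancestors $M^{u,v},M^{v,w},M^{u,w}$ coincide (your ``solitary vertex'' cases are exactly the paper's Cases 2--4, including the impossibility of the case $M^{u,v}=M^{v,w}\neq M^{u,w}$). The closing remark identifying $\precdot$ with the left-to-right leaf order of $T_G$ after ordering children by $\ell^M$ is a nice sanity check not made explicit in the paper, but the underlying argument is the same.
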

\begin{proof}
  Let $G$ be a complete edge-colored graph with vertex set~$V$.
  Furthermore, let $\ell^M$ be a labeling of the quotient graph
  $G[M]/\Mmax(M)$ for each $M\in \Mods[str]{G}$.  To prove that~$\precdot$
  is a strict total order on~$V$ we must show that $\precdot$ satisfies
  (R1) and (R2), \ie, $\precdot$ is trichotomous (and thus, well-defined)
  and transitive.
		
  We first verify that $\precdot$ is trichotomous.  Since there are no
  child modules of $M^{u,u}=\{u\}$ in $T_G$, we never have $u \precdot
  u$. Thus, let $u,v\in V$ be distinct. Recall that the strong modules
  $M^{u,v}$, $M^{u,v}_u$, and $M^{u,v}_v$ always exist and that they are
  unique for $u,v$. Since $\ell^{M^{u,v}}$ is a bijective map into
  $\inv{1}{|\Mmax(M^{u,v})|}$, we have either
  $\ell^{M^{u,v}}(M^{u,v}_u) < \ell^{M^{u,v}}(M^{u,v}_v)$ or
  $\ell^{M^{u,v}}(M^{u,v}_u) > \ell^{M^{u,v}}(M^{u,v}_v)$. By construction,
  we either have $u\precdot v$ or $v\precdot u$. In summary, $\precdot$ is
  trichotomous.

  We continue to show that $\precdot$ is transitive. To this end, suppose
  that $u \precdot v$ and $v \precdot w$ for some $u,v,w \in V$.  By
  trichotomy of $\precdot$, the vertices $u,v,w$ are distinct.
	
  Since $1 \leq |\{M^{u,v}, M^{v,w}, M^{u,w}\}|\leq 2$, we have to consider
  the following four cases: 1)~$M^{u,v}=M^{v,w}=M^{u,w}$,
  2)~$M^{v,w}=M^{u,w}\neq M^{u,v}$, 3)~$M^{u,v}=M^ {u,w}\neq M^{v,w}$, and
  4)~$M^{u,v}=M^{v,w}\neq M^{u,w}$.
  \begin{itemize}
  \item[1)] If $M\coloneqq M^{u,v}=M^{v,w}=M^{u,w}$, then $M_u$, $M_v$, and
    $M_w$ are distinct strong modules of $\Mmax(M)$.  Then, $u\precdot v$
    and $v\precdot w$ imply $\ell^{M}(M_u) < \ell^M(M_v) < \ell^M(M_w)$ and
    thus, we conclude $u \precdot w$.
  \item[2)] If $M^{v,w}=M^{u,w}\neq M^{u,v}$, then $M^{u,w}_u=M^{u,w}_v$.
    Since $v \precdot w$, we have
    $\ell^{M^{u,w}}(M^{u,w}_u)=\ell^{M^{u,w}}(M^{u,w}_v) <
    \ell^{M^{u,w}}(M^{u,w}_w)$ and thus, $u\precdot w$.
  \item[3)] Verified by similar arguments as in Case~2).
  \item[4)] We show that this case cannot occur.  If
    $M^{u,v}=M^{v,w}\neq M^{u,w}$, then $M^{u,v}_u=M^{u,v}_w$ and thus, we
    have $\ell^{M^{u,v}}(M^{u,v}_u)=\ell^{M^{u,v}}(M^{u,v}_w)$.  Together
    with $u \precdot v$ and $v \precdot w$, we conclude
    $\ell^{M^{u,v}}(M^{u,v}_u) < \ell^{M^{u,v}}(M^{u,v}_v) <
    \ell^{M^{u,v}}(M^{u,v}_w) = \ell^{M^{u,v}}(M^{u,v}_u)$; a
    contradiction.
  \end{itemize}
  Hence, $u\precdot v$ and $v\precdot w$ always implies $u\precdot
  w$. Therefore, $\precdot$ is transitive.  Since~$\precdot$ is
  trichotomous and transitive, the relation~$\precdot$ is a strict total
  order on~$V$.

  Since $\precdot$ is a strict total order on~$V$ we can immediately
  conclude that the map $\ell_{\precdot}$ is a labeling of $G$.
\end{proof}

We now define a second type of relation that we use to construct the
permutations for the complete edge-colored permutation graph
$(G,\ell_{\prec})$.

\begin{definition}\label{def:order2}
  Let $G$ be a complete $k$-edge-colored graph with vertex set $V$ and some
  labeling $\ell$.  For all $i\in\inv{1}{k}$, we denote with $\llcurly_i$
  the binary relation on $V$ that satisfies for all distinct $u,v\in V$
  with $\ell(u)>\ell(v)$:
  \begin{align*}
    u \llcurly_i v  & \text{ if edge } \{M^{u,v}_u,M^{u,v}_v\}
                      \text{ has color $i$ in } G[M^{u,v}]/\Mmax(M^{u,v})
                      \text{ and }\\
    v \llcurly_i u  & \text{ otherwise.}
  \end{align*}
\end{definition}

\begin{lemma}\label{lem:order2}
  Let $G$ be a complete $k$-edge-colored graph with vertex set $V$.  Suppose
  that $(G[M]/\Mmax(M),\ell^M)$ is a complete edge-colored permutation
  graph for each strong module $M\in \Mods[str]{G}$ together with some
  labeling $\ell^M$.  Then, for all $i\in\inv{1}{k}$, the binary relation
  $\llcurly_i$ defined on the labeling $\ell_{\prec}$ of $G$ is a strict
  total order on~$V$.
\end{lemma}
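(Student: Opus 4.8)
The plan is to show that $\llcurly_i$ is trichotomous and transitive, exactly mirroring the structure of the proof of \Cref{lem:order1}. Trichotomy is almost immediate: for distinct $u,v\in V$ the modules $M^{u,v}, M^{u,v}_u, M^{u,v}_v$ are uniquely determined, the edge $\{M^{u,v}_u,M^{u,v}_v\}$ in the quotient graph $G[M^{u,v}]/\Mmax(M^{u,v})$ has exactly one color, and \Cref{def:order2} then forces exactly one of $u\llcurly_i v$ or $v\llcurly_i u$ (with the tie-break supplied by the labeling $\ell_{\prec}$, which by \Cref{lem:order1} is a genuine labeling so that $\ell_{\prec}(u)\neq\ell_{\prec}(v)$); irreflexivity holds since $M^{u,u}=\{u\}$ has no children. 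So the real content is transitivity.

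For transitivity, suppose $u\llcurly_i v$ and $v\llcurly_i w$; by trichotomy $u,v,w$ are distinct, so $1\leq|\{M^{u,v},M^{v,w},M^{u,w}\}|\leq 2$ and we split into the same four cases as in \Cref{lem:order1}. In Case~1, $M\coloneqq M^{u,v}=M^{v,w}=M^{u,w}$ and $M_u,M_v,M_w$ are three distinct children; here I would invoke the hypothesis that $(G[M]/\Mmax(M),\ell^M)$ is a complete edge-colored permutation graph, so the quotient restricted to $\{M_u,M_v,M_w\}$ is itself a complete edge-colored permutation graph on three vertices, and apply \Cref{lem:forbidden_subgraph}: if $\{M_u,M_v\}$ and $\{M_v,M_w\}$ both have color $i$ then $\{M_u,M_w\}$ cannot have a color that would reverse the conclusion — more precisely, translating the $\llcurly_i$ relations into statements about $\ell^M$ and the permutation realizing color $i$ in the quotient, and then combining with the $\ell_{\prec}$-order on $u,v,w$, yields that $\{M_u,M_w\}$ has color $i$ with the labels ordered consistently, i.e. $u\llcurly_i w$. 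Cases~2 and~3 are the degenerate situations where two of the pairs land in the same child module: e.g. in Case~2, $M^{u,w}_u=M^{u,w}_v$, so $u$ and $v$ sit in the same child of $M^{u,w}$, hence the edge "$\{M^{u,v}_u,M^{v,w}_v\}$-type" comparison for the pair $\{u,w\}$ is literally the same edge as for $\{v,w\}$ in the quotient $G[M^{u,w}]/\Mmax(M^{u,w})$; thus $u\llcurly_i w$ iff $v\llcurly_i w$, and the hypothesis gives the former from the latter after checking the $\ell_{\prec}$-tie-break is consistent (which it is, because $u$ and $v$ lie in the same child module $M^{u,w}_u$, forcing $u$ and $w$ to compare the same way as $v$ and $w$ under both the color rule and the label order). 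Case~3 is symmetric. Finally, Case~4 ($M^{u,v}=M^{v,w}\neq M^{u,w}$) is impossible for the same tree-combinatorial reason as in \Cref{lem:order1}: $M^{u,v}_u=M^{u,v}_w$, so $u$ and $w$ are in the same child of $M^{u,v}$, but then $M^{u,w}\subsetneq M^{u,v}$ and both $u,v\llcurly_i$-comparisons... — here I would instead derive the contradiction directly from trichotomy/the structure of $T_G$, noting that if $u$ and $w$ lie in the same child $M^{u,v}_u$ of $M^{u,v}$ while $v$ lies in a different child, then $v$ can be $\llcurly_i$-between $u$ and $w$ only if the single edge colour governing both $\{u,v\}$ and $\{v,w\}$ forces $u$ and $w$ to the same side, contradicting nothing yet — so the cleanest route is to observe this case simply does not arise because $M^{u,v}=M^{v,w}\neq M^{u,w}$ together with the lowest-common-ancestor characterization forces $M^{u,w}\subseteq M^{u,v}_u = M^{u,v}_w$, which contradicts $w\in M^{u,v}_w$ being compared against $u$ at level $M^{u,v}$; I would phrase this exactly as in the excerpt's Case~4 argument, just with $\ell$ replaced by the relevant quotient labels.

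The main obstacle is Case~1: one has to carefully unwind what $u\llcurly_i v$ means at the level of the permutation $\hat\pi$ realizing colour $i$ in the three-vertex quotient, keep track of two interacting orders (the $\ell_{\prec}$-order, which enters through \Cref{def:order2}'s "$\ell(u)>\ell(v)$" clause, and the quotient labeling $\ell^M$), and check that the "no forbidden subgraph / no rainbow triangle" content of \Cref{lem:forbidden_subgraph} (or equivalently the fact that a genuine permutation cannot have $\pi^{-1}(a)>\pi^{-1}(b)>\pi^{-1}(c)$ while the induced colours violate transitivity) is exactly what rules out $w\llcurly_i u$. I expect that the most economical write-up of Case~1 reduces it to: the restriction of $G[M]/\Mmax(M)$ to $\{M_u,M_v,M_w\}$ is a complete edge-colored permutation graph (by \Cref{pro:i_iii}), fix its realizing labeling and permutations, translate the three pairwise $\llcurly_i$-assertions into inequalities among $(\ell^M)$-values and $\pi_i^{-1}$-values, and observe that assuming $w\llcurly_i u$ produces a triple with $\pi_i^{-1}$ strictly monotone in one direction but $\ell^M$ strictly monotone in the other along all three pairs — which is impossible for a permutation, exactly as in the proof of \Cref{lem:forbidden_subgraph}. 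Cases~2,~3,~4 are then genuinely routine, being verbatim analogues of the corresponding cases in \Cref{lem:order1} with the quotient-edge colour playing the role of the quotient-label comparison.
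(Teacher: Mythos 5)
Your overall architecture---trichotomy plus the same four-case transitivity analysis driven by $1\leq|\{M^{u,v},M^{v,w},M^{u,w}\}|\leq 2$---is exactly the paper's, and Cases~1--3 are sound. In Case~1 your packaging is in fact a little cleaner than the paper's: once you record that $\ell_{\precdot}(u)>\ell_{\precdot}(v)$ iff $\ell^{M^{u,v}}(M^{u,v}_u)>\ell^{M^{u,v}}(M^{u,v}_v)$ (immediate from \Cref{def:order1}), the definition of $\llcurly_i$ unwinds, for $a,b$ in a common quotient $G[M]/\Mmax(M)$, to $a\llcurly_i b\Leftrightarrow \hat{\pi}_i^{-1}(\ell^M(M_a))<\hat{\pi}_i^{-1}(\ell^M(M_b))$, where $\hat{\pi}_i$ is the permutation realizing color~$i$ in that quotient (take $\hat{\pi}_i=\iota$ if color~$i$ is absent); transitivity within one quotient is then just transitivity of $<$. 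The paper instead grinds through six label-orderings using \Cref{lem:rainbow_nonprime2} and \Cref{lem:forbidden_subgraph}, which is what you also cite as a fallback, so either route closes Case~1.

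The genuine problem is Case~4. You claim that $M^{u,v}=M^{v,w}\neq M^{u,w}$ ``simply does not arise'' for tree-combinatorial / lowest-common-ancestor reasons, extracting a ``contradiction'' from $M^{u,w}\subseteq M^{u,v}_u=M^{u,v}_w$. That containment is true but contradicts nothing: $u$ and $w$ are compared at level $M^{u,w}$, not at level $M^{u,v}$, and the configuration occurs in $T_G$ whenever $u$ and $w$ lie in the same child of $M^{u,v}$ while $v$ lies in a different one. (It is not excluded combinatorially in \Cref{lem:order1} either; there it is ruled out only by using the hypotheses $u\precdot v$ and $v\precdot w$.) What must be shown is that the conjunction $u\llcurly_i v$ and $v\llcurly_i w$ is impossible in this configuration, and the argument you started and then abandoned as ``contradicting nothing yet'' is precisely the right one: $\{M^{u,v}_u,M^{u,v}_v\}=\{M^{u,v}_w,M^{u,v}_v\}$ is a \emph{single} edge $e$ of the quotient carrying a \emph{single} color, and since $u$ and $w$ lie in the same child, $\ell_{\precdot}(u)$ and $\ell_{\precdot}(w)$ compare to $\ell_{\precdot}(v)$ in the same way; whichever way that is, \Cref{def:order2} makes $u\llcurly_i v$ demand that $e$ has (resp.\ lacks) color~$i$ while $v\llcurly_i w$ demands the opposite. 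Finish that computation instead of appealing to the tree structure, and the proof is complete.
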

\begin{proof}
  In what follows, put $\ell\coloneqq \ell_{\prec}$ and let
  $i\in\inv{1}{k}$ be arbitrarily chosen.
	
  We first verify that $\llcurly_i$ is well-defined and trichotomous.
  Clearly, $G[M^{u,u}]$ consists of the single vertex $u$ only and by
  construction, we never set $u \llcurly_i u$.  Now, let $u,v\in V$ be two
  distinct vertices.  Observe that the respective strong modules
  $M^{u,v}_u,M^{u,v}_v$ and $M^{u,v}$ are unique.  This, the fact that
  $G[M^{u,v}]/\Mmax(M^{u,v})$ is complete, and each edge of
  $G[M^{u,v}]/\Mmax(M^{u,v})$ has a unique color implies that $\llcurly_i$
  is well-defined.

  Without loss of generality suppose that $\ell(u) > \ell(v)$.  The edge
  $\{M^{u,v}_u,M^{u,v}_v\}$ in $G[M^{u,v}]/\Mmax(M^{u,v})$ has a unique
  color~$j\in\inv{1}{k}$.  If $j=i$, then we have $u \llcurly_i v$.  If,
  otherwise, $j\neq i$, then we have $v \llcurly_i u$.  This together with
  the fact that $M^{u,v}_u,M^{u,v}_v$ and $M^{u,v}$ are unique for $u$ and
  $v$ implies that $\llcurly_i$ is trichotomous.
		
  We continue to show that $\llcurly_i$ is transitive.  Suppose that
  $u \llcurly_i v$ and $v \llcurly_i w$ for some $u,v,w \in V$.  By
  trichotomy of~$\llcurly_i$, the vertices $u,v,w$ are distinct.

  Since $1\leq |\{M^{u,v}, M^{v,w}, M^{u,w}\}|\leq 2$, we have to consider
  the following four cases: 1)~$M^{u,v}=M^{v,w}=M^{u,w}$,
  2)~$M^{v,w}=M^{u,w}\neq M^{u,v}$, 3)~$M^{u,v}=M^{u,w}\neq M^{v,w}$, and
  4)~$M^{u,v}=M^{v,w}\neq M^{u,w}$.

  \begin{itemize}
  \item[1)] If $M\coloneqq M^{u,v}=M^{v,w}=M^{u,w}$, then $M_u$, $M_v$, and
    $M_w$ are distinct strong modules of $\Mmax(M)$.  Hence, the quotient
    graph $G[M]/\Mmax(M)$ contains the vertices $M_u$, $M_v$, and $M_w$.
    Since $G[M]/\Mmax(M)$ is complete (by~\Cref{rem:subgraph}), the edges
    $\{M_u,M_v\}$, $\{M_v,M_w\}$, and $\{M_u,M_w\}$ exist in
    $G[M]/\Mmax(M)$ and have each a unique color.  For simplicity, we
    denote the color of edge $\{M_u,M_v\}$, $\{M_v,M_w\}$, and
    $\{M_u,M_w\}$ by $c_{uv}$, $c_{vw}$, and $c_{uw}$, respectively.  By
    assumption $G[M]/\Mmax(M)$ is a complete edge-colored permutation
    graph. Thus, we can apply \Cref{lem:rainbow_nonprime2} to conclude that
    $G[M]/\Mmax(M)$ cannot contain a rainbow triangle, \ie,
    $1\leq |\{c_{uv},c_{vw},c_{uw}\}|\leq 2$.  In what follows, we refer to
    the latter condition as Condition ``\emph{2C}''.  We now consider six
    different cases with respect to the total order of $\ell(u)$,
    $\ell(v)$, and $\ell(w)$.
    \begin{itemize}
    \item[\em Case $\ell(u)<\ell(v)<\ell(w)$:] In this case,
      $u \llcurly_i v$ implies $c_{uv}\in\inv{1}{k}\setminus\{i\}$ and
      $v \llcurly_i w$ implies $c_{vw}\in\inv{1}{k}\setminus\{i\}$.
      Suppose first that $c_{uv}\neq c_{vw}$. By~\emph{2C}, we have
      $c_{uw} \in \{c_{uv},c_{vw}\}$.  Hence, $c_{uw}\neq i$ which together
      with $\ell(u)<\ell(w)$ implies $u\llcurly_i w$.
				
      Now consider $c\coloneqq
      c_{uv}=c_{vw}$. By~\Cref{lem:forbidden_subgraph}, the case
      $c_{uw}\neq c$ cannot occur. Consequently, we have $c_{uw}=c\neq i$
      which together with $\ell(u)<\ell(w)$ implies $u \llcurly_i w$.

    \item[\em Case $\ell(w)<\ell(v)<\ell(u)$:] In this case,
      $u \llcurly_i v$ and $v \llcurly_i w$ imply $c_{uv}=c_{vw}=i$.  By
      \Cref{lem:forbidden_subgraph}, $c_{uw}=i$. This together with
      $\ell(w)<\ell(u)$ implies $u \llcurly_i w$.

    \item[\em Case $\ell(u)<\ell(w)<\ell(v)$:] In this case,
      $u \llcurly_i v$ implies $c_{uv}\in\inv{1}{k}\setminus\{i\}$ and
      $v \llcurly_i w$ implies $c_{vw}=i$.  By~\emph{2C}, we have
      $c_{uw} \in \{c_{uv},i\}$. However, by~\Cref{lem:forbidden_subgraph},
      we must have $c_{uw}\neq i$ which together with $\ell(u)<\ell(w)$
      implies $u \llcurly_i w$.
			
    \item[\em Case $\ell(v)<\ell(u)<\ell(w)$:] In this case,
      $u \llcurly_i v$ implies $c_{uv}=i$ and $v \llcurly_i w$ implies
      $c_{vw}\in\inv{1}{k}\setminus\{i\}$.  By~\emph{2C}, we have
      $c_{uw} \in \{c_{vw},i\}$.  Again, by~\Cref{lem:forbidden_subgraph},
      $c_{uw}\neq i$ which together with $\ell(u)<\ell(w)$ implies
      $u \llcurly_i w$.

    \item[\em Case $\ell(v)<\ell(w)<\ell(u)$:] In this case,
      $u \llcurly_i v$ implies $c_{uv}=i$ and $v \llcurly_i w$ implies
      $c_{vw}\in\inv{1}{k}\setminus\{i\}$.  By~\emph{2C}, we have
      $c_{uw} \in \{c_{vw},i\}$.  However,
      by~\Cref{lem:forbidden_subgraph}, it must hold $c_{uw}\neq c_{vw}$
      and thus, $c_{uw}=i$. This together with $\ell(w)<\ell(u)$ implies
      $u \llcurly_i w$.
				
    \item[\em Case $\ell(w)<\ell(u)<\ell(v)$:] In this case,
      $u \llcurly_i v$ implies $c_{uv}\in\inv{1}{k}\setminus\{i\}$ and
      $v \llcurly_i w$ implies $c_{vw}=i$.  By~\emph{2C}, we have
      $c_{uw} \in \{c_{uv},i\}$.  Again, by ~\Cref{lem:forbidden_subgraph},
      we have $c_{uw}\neq c_{uv}$ and thus, $c_{uw}=i$.  This together with
      $\ell(w)<\ell(u)$ implies $u \llcurly_i w$.
    \end{itemize}
    In summary, $u \llcurly_i v$ and $v \llcurly_i w$ implies
    $u \llcurly_i w$ in each of the six cases.
	 			
  \item[2)] If $M\coloneqq M^{v,w}=M^{u,w}\neq M^{u,v}$, then
    $M_u=M_v\neq M_w$.  This, in particular, implies
    $e\coloneqq\{M_v,M_w\}=\{M_u,M_w\}\in E(G[M]/\Mmax(M))$.  Since
    $\ell^{M}$ is a bijective map into $\inv{1}{|\Mmax(M)|}$, we have
    either $\ell^{M}(M_u)=\ell^{M}(M_v)>\ell^{M}(M_w)$ or
    $\ell^{M}(M_u)=\ell^{M}(M_v)<\ell^{M}(M_w)$.
			
    If $\ell^{M}(M_u)=\ell^{M}(M_v)>\ell^{M}(M_w)$, then, by construction,
    we have $\ell(u)>\ell(w)$ and $\ell(v)>\ell(w)$.  Since
    $\ell(v)>\ell(w)$ and $v \llcurly_i w$, the edge~$e$ has color~$i$.
    This together with $\ell(u)>\ell(w)$ implies $u \llcurly_i w$.
			
    If $\ell^{M}(M_u)=\ell^{M}(M_v)<\ell^{M}(M_w)$, then, by construction,
    we have $\ell(u)<\ell(w)$ and $\ell(v)<\ell(w)$.  Since
    $\ell(v)<\ell(w)$ and $v \llcurly_i w$, the edge~$e$ has
    color~$j\in \inv{1}{k}\setminus\{i\}$.  This together with
    $\ell(u)<\ell(w)$ implies $u \llcurly_i w$.

  \item[3)] Verified by similar arguments as in Case~2).
			 	
  \item[4)] We show that the case $M^{u,v}=M^{v,w}\neq M^{u,w}$ cannot
    occur.  If $M\coloneqq M^{u,v}=M^{v,w}\neq M^{u,w}$, then
    $M_u=M_w\neq M_v$.  This, in particular, implies
    $e\coloneqq\{M_u,M_v\}=\{M_w,M_v\}\in E(G[M]/\Mmax(M))$.
    Since~$\ell^{M}$ is a bijective map into $\inv{1}{|\Mmax(M)|}$, we have
    either $\ell^{M}(M_u)=\ell^{M}(M_w)>\ell^{M}(M_v)$ or
    $\ell^{M}(M_u)=\ell^{M}(M_w)<\ell^{M}(M_v)$.
			
    If $\ell^{M}(M_u)=\ell^{M}(M_w)>\ell^{M}(M_v)$, then, by construction,
    we have $\ell(u)>\ell(v)$ and $\ell(w)>\ell(v)$.  Since
    $\ell(u)>\ell(v)$ and $u \llcurly_i v$, the edge $e$ must have color
    $i$.  However, $\ell(w)>\ell(v)$ and $v \llcurly_i w$ implies that the
    edge~$e$ has color~$j\in\inv{1}{k}\setminus\{i\}$; a contradiction.
			
    If $\ell^{M}(M_u)=\ell^{M}(M_w)<\ell^{M}(M_v)$, then, by construction,
    we have $\ell(u)<\ell(v)$ and $\ell(w)<\ell(v)$.  Since
    $\ell(u)<\ell(v)$ and $u \llcurly_i v$, the edge~$e$ must have
    color~$j\in\inv{1}{k}\setminus\{i\}$.  However, $\ell(w)<\ell(v)$ and
    $v \llcurly_i w$ implies that the edge $e$ must have color $i$; a
    contradiction.
  \end{itemize}
  In each case, $u \llcurly_i v$ and $v \llcurly_i w$ implies
  $u \llcurly_i w$, \ie, $\llcurly_i$ is transitive. This, together with
  the trichotomy of $\llcurly_i$ implies that $\llcurly_i$ is a strict
  total order on~$V$.  Since $i\in\inv{1}{k}$ was chosen arbitrarily, all
  $\llcurly_1,\dots, \llcurly_k$ are strict total orders on $V$.
\end{proof}

We are now in the position to show that the converse of
\Cref{lem:colored_perm_iff2} is valid as well.

\begin{proposition}\label{lem:colored_perm_iff2-b}
  Let $G$ be a complete edge-colored graph such that the quotient graph
  $G[M]/\Mmax(M)$ of each strong module $M\in \Mods[str]{G}$ is a complete
  edge-colored permutation graph.  Then $G$ is a complete edge-colored
  permutation graph.
\end{proposition}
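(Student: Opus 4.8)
The plan is to exhibit one explicit labeling of $G$ and, for each color, one explicit permutation, and then to verify the defining equivalence of \Cref{thm:characterization_all_k_PG}~(i) color by color; the substantive combinatorics has already been isolated in \Cref{lem:order1} and \Cref{lem:order2}, so what remains is essentially an assembly of those pieces. If $G\simeq K_1$ there is nothing to prove, so I would assume $|V|\geq 2$; then $G$ is complete with non-empty edge set, say $G=(V,E_1,\dots,E_k)$. By hypothesis, for every strong module $M\in\Mods[str]{G}$ the quotient graph $G[M]/\Mmax(M)$ is a complete edge-colored permutation graph, and I would fix a labeling $\ell^M$ witnessing this. Feeding these labelings $\ell^M$ into \Cref{def:order1}, \Cref{lem:order1} yields a strict total order $\precdot$ on $V$ and an induced labeling $\ell_{\precdot}$ of $G$. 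Setting $\ell\coloneqq\ell_{\precdot}$, for every color $i\in\inv{1}{k}$ I then form the relation $\llcurly_i$ on $V$ relative to $\ell$ as in \Cref{def:order2}; since the hypotheses of \Cref{lem:order2} are precisely those assumed here, each $\llcurly_i$ is a strict total order on~$V$.

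Next I would convert each order $\llcurly_i$ into a permutation. Put $n\coloneqq|V|$ and let $(v^i_1,\dots,v^i_n)$ be the enumeration of $V$ with $v^i_1\llcurly_i\dots\llcurly_i v^i_n$. Define $\pi_i\in\Pc{n}$ by $\pi_i(j)\coloneqq\ell(v^i_j)$; this is a bijection because $j\mapsto v^i_j$ and $\ell$ both are, and it satisfies $\pi_i^{-1}(\ell(v^i_j))=j$. Consequently, for all distinct $u,v\in V$ one has $\pi_i^{-1}(\ell(u))<\pi_i^{-1}(\ell(v))$ if and only if $u\llcurly_i v$.

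It then remains to check that $(G,\ell)$ is a complete $k$-edge-colored permutation graph of $\pi_1,\dots,\pi_k$. I would fix distinct vertices $u,v$ and a color $i$, and, after renaming if necessary, assume $\ell(u)>\ell(v)$. The bridge linking the colors of $G$ to \Cref{def:order2} is the observation that $\{u,v\}\in E_i$ holds if and only if the edge $\{M^{u,v}_u,M^{u,v}_v\}$ has color $i$ in the quotient graph $G[M^{u,v}]/\Mmax(M^{u,v})$: indeed $M^{u,v}_u$ and $M^{u,v}_v$ are disjoint strong modules of $G$, so by \Cref{lem:sameColor} all $G$-edges between them carry a single color, which by the definition of the quotient graph's coloring is exactly the color of $\{M^{u,v}_u,M^{u,v}_v\}$ in $G[M^{u,v}]/\Mmax(M^{u,v})$; since $\{u,v\}$ is one of these edges, the two statements coincide. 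Reading \Cref{def:order2} with $\ell(u)>\ell(v)$ now gives $\{u,v\}\in E_i\Leftrightarrow u\llcurly_i v$, and combining this with the previous paragraph gives $\{u,v\}\in E_i\Leftrightarrow\pi_i^{-1}(\ell(u))<\pi_i^{-1}(\ell(v))$. Under the standing assumption $\ell(u)>\ell(v)$ this is exactly the equivalence defining a simple permutation graph, so $(G_{|i},\ell)$ is the simple permutation graph of $\pi_i$ for each $i$; hence $(G,\ell)$ is a complete edge-colored permutation graph.

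The step I expect to require the most care is this last one, specifically the orientation bookkeeping: the definition of a (simple) permutation graph and \Cref{def:order2} are both phrased asymmetrically, with the larger-labeled endpoint written first, so one must be consistent about which of $u,v$ plays which role when chaining the equivalences. Everything else — that $\ell_{\precdot}$ is a labeling, that the $\llcurly_i$ are strict total orders, and that each $\pi_i$ is a genuine permutation — is either supplied by the cited lemmas or immediate from the construction, so I do not anticipate further obstacles.
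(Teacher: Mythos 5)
Your proposal is correct and follows essentially the same route as the paper: construct $\ell_{\precdot}$ via \Cref{def:order1} and \Cref{lem:order1}, obtain the strict total orders $\llcurly_i$ via \Cref{lem:order2}, convert each into a permutation $\pi_i$, and verify the defining equivalence using the fact that $\{u,v\}\in E_i$ exactly when $\{M^{u,v}_u,M^{u,v}_v\}$ carries color $i$ in the quotient graph. Your explicit enumeration-based construction of $\pi_i$ and the appeal to \Cref{lem:sameColor} merely spell out steps the paper leaves implicit.
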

\begin{proof}
  Let $G=(V,E_1,\dots,E_k)$ be a complete $k$-edge-colored graph.  By assumption
  there exists for each strong module $M\in \Mods[str]{G}$ a labeling
  $\ell^M$ of the quotient graph $G[M]/\Mmax(M)$ such that
  $(G[M]/\Mmax(M),\ell^M)$ is a complete edge-colored permutation graph.
		
  Now, let $\precdot$ and $\ell_{\precdot}$ be as specified in
  \Cref{def:order1}.  By \Cref{lem:order1} the map
  $\ell\coloneqq \ell_{\precdot}$ is a labeling of $G$.  Moreover, by Lemma~\ref{lem:order2} we have for all $i\in\inv{1}{k}$ that the binary
  relation $\llcurly_i$ defined on the labeling $\ell$ of $G$ is a strict
  total order on~$V$.  This immediately implies that the map
  $\pi_i\dis \inv{1}{|V|} \rightarrow \inv{1}{|V|}$ that satisfies for all
  distinct $u,v\in V$
\begin{equation*}
  \pi^{-1}_i(\ell(u)) < \pi^{-1}_i(\ell(v)) \Leftrightarrow u \llcurly_i v
\end{equation*}
is a permutation of length~$|V|$ for all $i\in \inv{1}{k}$.
		
It remains show that $(G,\ell)$ is a complete edge-colored permutation
graph of $\pi_1,\dots,\pi_k$.  For this purpose, let $u,v$ be two distinct
vertices of $G=(V,E_1,\dots,E_k)$ and $i\in\inv{1}{k}$.  Without loss of
generality let $\ell(u)>\ell(v)$.  If $\{u,v\}\in E_i$, then, by
definition, $\{M^{u,v}_u,M^{u,v}_v\}$ must have color~$i$ in
$G[M^{u,v}]/\Mmax(M^{u,v})$.  Therefore, we have $u \llcurly_i v$ which
implies $\pi^{-1}_i(\ell(u))<\pi^{-1}_i(\ell(v))$.  By analogous reasoning,
$\pi^{-1}_i(\ell(u))<\pi^{-1}_i(\ell(v))$ and $\ell(u)>\ell(v)$ implies
$\{u,v\}\in E_i$.  In summary, we showed for all distinct $u,v\in V$ and
all $i\in \inv{1}{k}$ that
\begin{equation*}
  \{u,v\} \in E_i \Leftrightarrow \ell(u)>\ell(v) \text{ and } \pi^{-1}_i(\ell(u))<\pi^{-1}_i(\ell(v)).
\end{equation*}
Consequently, $(G,\ell)$ is a complete edge-colored permutation graph of
$\pi_1,\dots,\pi_k$.
\end{proof}

\subsection{A Characterization in Terms of Prime
  Modules}\label{section:item6}

Intriguingly, we can strengthen the results of Section~\ref{sec:strongM} by
considering strong prime modules only. We show here that complete
edge-colored permutation graphs are completely characterized by the fact
that the quotient graph of each strong prime module is a complete
edge-colored permutation graph.
Hence, we show the equivalence of \Cref{thm:characterization_all_k_PG}~(i) and \Cref{thm:characterization_all_k_PG}~(iv).

\begin{proposition}\label{thm:k_colored_permgraph_iff}
  Let $G$ be a complete edge-colored graph. Assume that
  the quotient graph of each strong prime module in the modular
  decomposition of $G$ is a complete edge-colored permutation graph. Then
  $G$ is a complete edge-colored permutation graph.
\end{proposition}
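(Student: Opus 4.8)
The plan is to derive this proposition from \Cref{lem:colored_perm_iff2-b}, which already shows that $G$ is a complete edge-colored permutation graph as soon as the quotient graph of \emph{every} strong module of $G$ is one. Thus it suffices to upgrade the hypothesis from ``strong prime modules'' to ``all strong modules'', i.e., to verify that, under our assumption, $G[M]/\Mmax(M)$ is a complete edge-colored permutation graph for every $M\in\Mods[str]{G}$, not only for the prime ones.

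Since $G$ is complete, every strong module of $G$ is either trivial (a singleton), series, or prime; the parallel case does not arise here. I would distinguish these cases. If $M=\{v\}$, then $G[M]/\Mmax(M)=K_1$, which is a complete edge-colored permutation graph by definition. If $M$ is prime, then $G[M]/\Mmax(M)$ is a complete edge-colored permutation graph by the hypothesis of the proposition. Finally, if $M$ is series, then by definition $G[M]/\Mmax(M)$ is a complete $1$-edge-colored graph $K_m$ with $m=|\Mmax(M)|\geq 2$; by \Cref{cor:indepentetSet_iota}(ii), for any labeling $\ell$ of $K_m$ the pair $(K_m,\ell)$ is the simple permutation graph of the reversed identity $\ol{\iota}$, so $(K_m,\ell)$ is a complete $1$-edge-colored permutation graph. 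Hence in all three cases $G[M]/\Mmax(M)$ is a complete edge-colored permutation graph.

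With this observation in hand, \Cref{lem:colored_perm_iff2-b} applies and yields that $G$ itself is a complete edge-colored permutation graph, which finishes the proof. There is essentially no obstacle: the only point to notice is that the apparent strengthening from ``each strong prime module'' to ``each strong module'' costs nothing, because the quotient graph of a series module is a complete monochromatic graph, and such a graph is trivially the permutation graph of $\ol{\iota}$; all the substantive work — the construction of the labeling $\ell_{\precdot}$ via \Cref{def:order1} and of the permutations from the orders $\llcurly_i$ via \Cref{def:order2} — was already carried out in the proof of \Cref{lem:colored_perm_iff2-b}. (The additional ``in particular'' clause of \Cref{thm:characterization_all_k_PG}~(iv) is then immediate: by \Cref{lem:PrimeImpliesPrimitive} the quotient of a strong prime module is primitive, so by \Cref{lem:prime_rainbow} it would contain a rainbow triangle if it used at least three colors, contradicting \Cref{lem:rainbow_nonprime2}; and a primitive complete graph on at least three vertices cannot be $1$-edge-colored, so it must be exactly $2$-edge-colored.)
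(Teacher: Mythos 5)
Your proof is correct and follows essentially the same route as the paper: reduce to \Cref{lem:colored_perm_iff2-b} by observing that quotient graphs of series modules are complete $1$-edge-colored graphs and hence permutation graphs of $\ol{\iota}$, so the hypothesis on prime modules already covers all strong modules. The only cosmetic differences are that you treat singletons as a separate case (the paper's definition classifies them as prime, so they are covered by the hypothesis anyway) and that you append a justification of the ``in particular'' clause, which the paper handles in \Cref{thm:k_colored_permgraph_iff-b}.
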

\begin{proof}
  First note that the quotient graph $G[M]/\Mmax(M)$ of a strong module $M$
  of $G$ is always complete (\Cref{rem:subgraph}).  The quotient graph
  $G[M]/\Mmax(M)$ of a series module~$M$ of~$G$ is complete and
  $1$-edge-colored.  In this case, $G[M]/\Mmax(M)$ is the complete $1$-edge-colored
  permutation graph of the permutation $\ol{\iota}$.  By assumption, the
  quotient graph of each strong prime module of $G$ is a complete
  edge-colored permutation graph.  In summary, the quotient graph
  $G[M]/\Mmax(M)$ of \emph{every} strong module $M\in \Mods[str]{G}$ is a
  complete edge-colored permutation graph. Hence, we can apply
  \Cref{lem:colored_perm_iff2-b} to conclude that $G$ is a complete
  edge-colored permutation graph.
\end{proof}

\begin{corollary}\label{thm:k_colored_permgraph_iff-b}
  Let $G$ be a complete edge-colored permutation graph.  Then the quotient
  graph of each strong prime module in the modular decomposition of $G$ is
  a complete edge-colored permutation graph.  In particular, for every
  strong prime module on at least three vertices the quotient graph must be
  $2$-edge-colored.
\end{corollary}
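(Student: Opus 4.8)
The first assertion requires no new work: every strong prime module is in particular a strong module, so it follows at once from \Cref{lem:colored_perm_iff2} that if $G$ is a complete edge-colored permutation graph, then so is the quotient graph $G[M]/\Mmax(M)$ of each strong prime module $M$.

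To obtain the sharper ``in particular'' statement, the plan is to bound the number of colors of $G[M]/\Mmax(M)$ from both sides. Fix a strong prime module $M$ with $|M|\geq 3$; then $\Mmax(M)$ has at least two elements, and by \Cref{rem:subgraph} the quotient $G[M]/\Mmax(M)$ is a complete graph on at least two vertices. First I would note that this quotient must use at least two colors: a complete graph on at least two vertices that carries a single color is complete $1$-edge-colored, which by the series/prime dichotomy would make $M$ a series module rather than a prime one. Next, I would argue by contradiction that it cannot use three or more colors. If $G[M]/\Mmax(M)$ were $k$-edge-colored with $k\geq 3$, then primeness of $M$ together with \Cref{lem:PrimeImpliesPrimitive} would make $G[M]/\Mmax(M)$ primitive, and \Cref{lem:prime_rainbow} would then force a rainbow triangle in it; but by the first assertion $G[M]/\Mmax(M)$ is a complete edge-colored permutation graph, which by \Cref{lem:rainbow_nonprime2} cannot contain a rainbow triangle. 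This contradiction leaves exactly $k=2$, as claimed.

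There is no genuinely hard step here: the corollary is essentially bookkeeping on top of \Cref{lem:colored_perm_iff2} and the forbidden-substructure lemmas \Cref{lem:prime_rainbow,lem:rainbow_nonprime2}. The only places that need a moment's care are ensuring the quotient graph has at least two vertices — so that ``carries one color'' genuinely means ``complete $1$-edge-colored'' and hence triggers the series case, which is exactly where the hypothesis $|M|\geq 3$ is used — and observing that a complete edge-colored permutation graph is in particular a $k$-edge-colored complete graph for the relevant finite $k$, so that \Cref{lem:prime_rainbow} applies.
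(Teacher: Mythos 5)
Your proof is correct and follows essentially the same route as the paper's: the first assertion is read off from \Cref{lem:colored_perm_iff2}, and the bound on the number of colors is obtained by combining \Cref{lem:PrimeImpliesPrimitive}, \Cref{lem:prime_rainbow}, and \Cref{lem:rainbow_nonprime2}. The only (harmless) difference is that you derive the rainbow-triangle contradiction inside the quotient graph itself, using the already-established fact that it is a complete edge-colored permutation graph, whereas the paper first lifts the rainbow triangle from the quotient back to $G$ and contradicts the hypothesis on $G$.
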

\begin{proof}
  Let $G$ be a complete edge-colored permutation graph.  By
  \Cref{lem:colored_perm_iff2}, the quotient graph of each strong module of~$G$
  and thus, of each strong prime module of $G$, is a complete edge-colored
  permutation graph.

  We continue to show that the quotient graph of each strong prime module
  on at least three vertices must be 2-edge-colored.  Assume, for contradiction,
  that $G$ contains a strong prime module~$M$ with $|M|\geq 3$ whose
  quotient graph $G[M]/\Mmax(M)$ is not a $2$-edge-colored graph.  By
  \Cref{rem:subgraph}, $G[M]/\Mmax(M)$ is complete. However, this implies
  that $G[M]/\Mmax(M)$ must be $k$-edge-colored for some $k\geq 3$, as
  otherwise, it would be a simple permutation graph for $\ol\iota$.

  Since $G[M]/\Mmax(M)$ is primitive (cf.\
  \Cref{lem:PrimeImpliesPrimitive}), we can apply \Cref{lem:prime_rainbow}
  to conclude that $G[M]/\Mmax(M)$ contains a rainbow triangle. It is easy
	to verify that this implies that $G$
  must contain a rainbow triangle; a contradiction to
  \Cref{lem:rainbow_nonprime2} and the assumption that $G$ is a complete
  edge-colored permutation graph.
\end{proof}

\subsection{A Characterization via Monochromatic Subgraphs}\label{sec:mono}
In this section, we show that a graph is a complete edge-colored
permutation graph if and only if it does not contain a rainbow triangle and
each monochromatic subgraph of it is a simple permutation graph.  
Hence, we show the equivalence of \Cref{thm:characterization_all_k_PG}~(i) and \Cref{thm:characterization_all_k_PG}~(v).
To this end, we first need the following result:

\begin{lemma}\label{lem:characterization-2nd-case} 
  Let $G$ be a complete edge-colored graph. Assume that $G_{|i}$ (together
  with some labeling $\ell_i$) is a simple permutation graph for all
  $i\in\inv{1}{k}$ and the quotient graph of each strong prime module of
  $G$ on at least three vertices is a 2-edge-colored graph.  Then $G$ is a
  complete edge-colored permutation graph.
\end{lemma}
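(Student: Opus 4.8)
The plan is to reduce the claim to \Cref{thm:k_colored_permgraph_iff}, which states that $G$ is a complete edge-colored permutation graph as soon as the quotient graph of every strong prime module of $G$ is one. So the task becomes: show that $G[M]/\Mmax(M)$ is a complete edge-colored permutation graph for each strong prime module $M\in\Mods[str]{G}$.

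I would handle two cases. If $M=\{v\}$ is a singleton, then $G[M]/\Mmax(M)$ is the single-vertex graph, which is a complete edge-colored permutation graph by definition. Otherwise $|M|\geq 2$; but with the conventions of this paper a strong module on exactly two vertices is series (its quotient graph is the $1$-edge-colored $K_2$), so in fact $|M|\geq 3$, and the hypothesis of the lemma tells us that $G[M]/\Mmax(M)$ is a complete $2$-edge-colored graph.

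For such $M$ I would then invoke \Cref{rem:subgraph}: the quotient $G[M]/\Mmax(M)$ is isomorphic to an induced subgraph $G[N]$ of $G$, where $N$ contains exactly one vertex from each child module in $\Mmax(M)$. Consequently, each of the two monochromatic subgraphs of $G[M]/\Mmax(M)$ is isomorphic to some $G_{|i}[N]$, an induced subgraph of $G_{|i}$; since $G_{|i}$ is a simple permutation graph and this class is hereditary (\Cref{lem:simplePerm-hereditary}), $G_{|i}[N]$ is a simple permutation graph. In particular the first monochromatic subgraph of $G[M]/\Mmax(M)$ is a simple permutation graph, so \Cref{cor:two_colored_one_label} shows that $G[M]/\Mmax(M)$ is a complete $2$-edge-colored permutation graph. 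As this covers every strong prime module of $G$, \Cref{thm:k_colored_permgraph_iff} yields that $G$ is a complete edge-colored permutation graph.

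I do not expect a genuine obstacle: the argument is a short chain of results already at hand. The only places needing a bit of care are the case analysis on $|M|$ --- checking that singletons together with strong prime modules on at least three vertices really exhaust all strong prime modules, which uses the paper's convention classifying two-vertex strong modules as series --- and the routine observation that ``being a simple permutation graph'' and ``being a complete edge-colored permutation graph'' are preserved under the isomorphism provided by \Cref{rem:subgraph}.
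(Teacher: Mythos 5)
Your proof is correct and follows essentially the same route as the paper: both reduce to the quotient graphs of strong modules, use \Cref{rem:subgraph} together with the heredity of simple permutation graphs to see that each $2$-edge-colored prime quotient has a simple permutation graph as a monochromatic part, and finish with \Cref{cor:two_colored_one_label}. The only cosmetic difference is that you invoke \Cref{thm:k_colored_permgraph_iff} (so the series modules are handled implicitly), whereas the paper checks the series quotients directly and applies \Cref{lem:colored_perm_iff2-b}.
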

\begin{proof}
  Let $G=(V,E_1,\dots,E_k)$ be a complete $k$-edge-colored graph.
  We proceed to show that all conditions of \Cref{lem:colored_perm_iff2-b}
  are satisfied in order to show that $G$ is a complete edge-colored
  permutation graph. To this end, let us examine the quotient graph of a
  strong module $M$ of $G$.

  If $M$ is series, then its quotient graph $G[M]/\Mmax(M)$ is a complete
  1-edge-colored graph and thus, a complete 1-edge-colored permutation graph
  for~$\ol{\iota}$.

  If $M$ is prime, then, by \Cref{rem:subgraph}, $G[M]/\Mmax(M)$ is
  complete.  If $|M|\leq 2$, then $|M|=1$ as otherwise, $M$ would be
  series.  Hence, $G[M]/\Mmax(M)$ is the single vertex graph and thus, a
  complete edge-colored permutation graph.  Now suppose that $|M|\geq 3$. By
  assumption, $G[M]/\Mmax(M)$ is $2$-edge-colored.  Hence, for the two distinct
  colors~$i$ and~$j$ used to color $G[M]/\Mmax(M)$, there must be edges
  with color~$i$ and~$j$, respectively.  By assumption $G_{|i}$ 
	and $G_{|j}$ are a simple
  permutation graphs.  By~\Cref{lem:simplePerm-hereditary},
  $G_{|i}[N]$ and $G_{|j}[N]$ are simple permutation graphs for all
  $N\subseteq V$.  Let $N\subseteq V$ be chosen such that~$N$ contains
  exactly one vertex from each child module of~$M$.  By
  \Cref{rem:subgraph}, $G[M]/\Mmax(M)$ is isomorphic to the induced
  subgraph $G[N]$ of $G$.  In particular, the \linebreak $h$-th
  monochromatic subgraph $G[M]/\Mmax(M)_{|h}$ of $G[M]/\Mmax(M)$ must be
  isomorphic to $G_{|h}[N]$, $h\in\{i,j\}$.  As argued above, $G_{|i}[N]$ and $G_{|j}[N]$
  are simple permutation graphs and therefore, $G[M]/\Mmax(M)_{|i}$ and
  $G[M]/\Mmax(M)_{|j}$ are both simple permutation graphs.  Since
  $G[M]/\Mmax(M)$ is 2-edge-colored, we can
  apply~\Cref{cor:two_colored_one_label} to conclude that $G[M]/\Mmax(M)$
  is a complete edge-colored permutation graph.

  For each strong module $M$ of $G$, we showed that the quotient graph
  $G[M]/\Mmax(M)$ is a complete edge-colored permutation graph.  Then, the
  claim follows by \Cref{lem:colored_perm_iff2-b}.
\end{proof}

The following two propositions prove the equivalence of Items~(i) and~(v)
of \Cref{thm:characterization_all_k_PG}.
\begin{proposition}\label{pro:characterization}
  Let $(G,\ell)$ be a complete edge-colored permutation graph. Then $G$
  does not contain a rainbow triangle and each monochromatic subgraph of
  $G$ is together with $\ell$ a simple permutation graph.
\end{proposition}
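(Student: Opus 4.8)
The plan is to prove the two claimed properties separately, both by direct contradiction using the permutation structure of $(G,\ell)$.

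First I would handle the rainbow triangle. Suppose $(G,\ell)$ is a complete edge-colored permutation graph of $\pi_1,\dots,\pi_k$ and, for contradiction, that $G$ contains a rainbow triangle $\tri_{uvw}$. Since all monochromatic subgraphs of $G$ are simple permutation graphs with the \emph{same} labeling $\ell$, the hypothesis of \Cref{lem:rainbow_nonprime2} is met — that lemma says that no complete edge-colored permutation graph can contain a rainbow triangle. So this direction is essentially immediate: it is just a restatement of \Cref{lem:rainbow_nonprime2}. The only thing to spell out is that the hypotheses of \Cref{lem:rainbow_nonprime2} are exactly "$G$ is a complete edge-colored permutation graph and $G$ contains a rainbow triangle", which is contradictory.

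Second I would show that each monochromatic subgraph $G_{|i}$ is, \emph{with the same labeling $\ell$}, a simple permutation graph. This is the part that actually needs an argument, though it is short. By definition of a complete $k$-edge-colored permutation graph, there exist permutations $\pi_1,\dots,\pi_k$ such that for all $i\in\inv{1}{k}$ and all $u,v\in V$,
\[
\{u,v\}\in E_i \iff \ell(u)>\ell(v)\text{ and }\pi_i^{-1}(\ell(u))<\pi_i^{-1}(\ell(v)).
\]
But $E(G_{|i}) = E_i$ and $G_{|i}$ carries the labeling $\ell$ by construction; so this is exactly the statement that $(G_{|i},\ell)$ is the simple permutation graph of $\pi_i$. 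Hence the claim follows directly by unwinding the definitions — there is nothing to prove beyond observing that the defining equivalence for "complete edge-colored permutation graph", restricted to a single color $i$, is literally the defining equivalence for "simple permutation graph of $\pi_i$ with labeling $\ell$".

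I do not expect any real obstacle here: this proposition is the "easy" direction of the equivalence between Items (i) and (v), and both halves are essentially bookkeeping — one half invokes \Cref{lem:rainbow_nonprime2} verbatim, the other half unpacks the definition of complete edge-colored permutation graph color by color. The substantive direction, namely recovering a complete edge-colored permutation graph from "no rainbow triangle plus each monochromatic subgraph is a simple permutation graph (with possibly different labelings)", is the content of \Cref{lem:characterization-2nd-case} together with \Cref{thm:k_colored_permgraph_iff}, and is proved elsewhere; it is not needed here.
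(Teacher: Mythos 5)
Your proposal is correct and matches the paper's own proof: the paper likewise disposes of the rainbow-triangle claim by citing \Cref{lem:rainbow_nonprime2} and observes that $(G_{|i},\ell)$ being a simple permutation graph of $\pi_i$ is immediate from the definition. No gaps.
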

\begin{proof}
  Let $(G,\ell)$ be a complete $k$-edge-colored permutation graph for the
  permutations $\pi_1,\dots,\pi_k \in \Mc{P}_{|V|}$.
  By~\Cref{lem:rainbow_nonprime2}, $G$ cannot contain a rainbow triangle.
  By definition, $(G_{|i},\ell)$ is be a simple permutation graph of~$\pi_i$.
\end{proof}

\begin{proposition}\label{pro:characterization-unlabeled} 
  Let $G$ be a complete edge-colored graph.  If every monochromatic
  subgraph of~$G$ is a simple permutation graph and~$G$ does not contain a
  rainbow triangle, then~$G$ is a complete edge-colored permutation graph.
\end{proposition}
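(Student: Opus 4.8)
The plan is to reduce this statement to \Cref{lem:characterization-2nd-case}, which already does most of the work: it asserts that a complete edge-colored graph whose monochromatic subgraphs are all simple permutation graphs is a complete edge-colored permutation graph as soon as the quotient graph of every strong prime module on at least three vertices is $2$-edge-colored. Since the first hypothesis of \Cref{lem:characterization-2nd-case} is exactly part of what we are given, the only thing left to verify is this last structural condition on the quotient graphs of strong prime modules, and this is precisely where the absence of rainbow triangles in $G$ enters.

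So first I would fix an arbitrary strong prime module $M$ of $G$ with $|M|\geq 3$ and analyze $G[M]/\Mmax(M)$. By \Cref{rem:subgraph} this quotient graph is complete, and by \Cref{lem:PrimeImpliesPrimitive} it is primitive. Assume for contradiction that it uses $k'\geq 3$ colors. Then \Cref{lem:prime_rainbow} applies and produces a rainbow triangle in $G[M]/\Mmax(M)$. The next step is to lift this rainbow triangle back into $G$: by \Cref{rem:subgraph}, $G[M]/\Mmax(M)$ is isomorphic to the induced subgraph $G[N]$, where $N$ contains exactly one vertex from each child module in $\Mmax(M)$, and this isomorphism respects edge colors because the quotient coloring is inherited from $G$. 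Hence the three vertices of $N$ corresponding to the rainbow triangle form a rainbow triangle in $G[N]$, and therefore in $G$ itself, contradicting the hypothesis. Consequently $G[M]/\Mmax(M)$ is at most $2$-edge-colored for every strong prime module $M$ with $|M|\geq 3$.

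With this condition established, and the given fact that every monochromatic subgraph $G_{|i}$ is a simple permutation graph, \Cref{lem:characterization-2nd-case} immediately yields that $G$ is a complete edge-colored permutation graph. The main (and really only) point requiring care is the lifting step — making sure that the color-preserving isomorphism of \Cref{rem:subgraph} indeed carries a rainbow triangle of the quotient graph to a rainbow triangle of $G$ — but this is essentially built into the definition of the quotient coloring, so the argument is a short chain of previously proved results rather than anything technical. (Together with \Cref{pro:characterization}, this establishes the equivalence of Items~(i) and~(v) of \Cref{thm:characterization_all_k_PG}.)
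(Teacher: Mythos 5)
Your proposal is correct and uses essentially the same ingredients as the paper's proof: \Cref{lem:PrimeImpliesPrimitive} and \Cref{lem:prime_rainbow} to show that a strong prime module whose quotient graph had three or more colors would force a rainbow triangle (lifted to $G$ via \Cref{rem:subgraph}), followed by an application of \Cref{lem:characterization-2nd-case}. The paper organizes this as a contrapositive with extra case distinctions (and explicitly notes that a prime quotient on at least three vertices cannot be $1$-edge-colored, which upgrades your ``at most $2$-edge-colored'' to ``exactly $2$-edge-colored'' as required by \Cref{lem:characterization-2nd-case}), but your direct version is the same argument, slightly streamlined.
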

\begin{proof} 
  The statement is trivially true if $G\simeq K_1$.
  Thus, let $G = (V,E_1,\dots, E_k)$ be a complete $k$-edge-colored graph with
  $|V|\geq 2$.

  By contraposition, we show that if $G$ is not a complete edge-colored
  permutation graph, then there exists a monochromatic subgraph of $G$ that
  is not a simple permutation graph or $G$ contains a rainbow triangle.

  Assume that $G$ is not a complete edge-colored permutation graph.  Since
  $G$ contains at least two vertices, we have that $k\geq 1$.  If $k=1$,
  then $G$ would be a complete $1$-edge-colored permutation graph for
  $\pi_1 = \ol{\iota}$.  Thus, we assume that $k\geq 2$.  Since $G$ is not
  a complete edge-colored permutation graph, for all labelings
  $\ell\dis V \rightarrow \inv{1}{|V|}$ of $G$ there are no permutations
  $\pi_1,\dots,\pi_k\in \Mc{P}_{|V|}$ such that $(G,\ell)$ is a complete
  $k$-edge-colored permutation graph of $\pi_1,\dots,\pi_k$.
  By definition, for all labelings $\ell$, $G$ contains always a
  monochromatic subgraph $G_{|i}=(V,E_i)$ such that $(G_{|i},\ell)$ is not
  a simple permutation graph of $\pi_i$.  Note, this does not imply that
  $G_{|i}$ is not a simple permutation graph at all, since we might find a
  different labeling $\ell_i$ of $G_{|i}$ such that $(G_{|i},\ell_i)$ is a
  simple permutation graph for some $\pi'_i$.

  If, however, for all possible labelings $\ell_i$, the labeled graph
  $(G_{|i},\ell_i)$ is not a simple permutation graph, then we are done,
  since we found a monochromatic subgraph of $G$ that is not a simple
  permutation graph at all.

  Thus, we are left with the situation that $G$ is not a complete
  edge-colored permutation graph and for all $i\in \inv{1}{k}$ there is
  some labeling $\ell_i$ such that $(G_{|i},\ell_i)$ is a simple
  permutation graph. Note, that this implies that $k>2$ as otherwise,
  $E(G) = E(G_{|1})\cup E(\overline{G_{|1}})$ and, by
  \Cref{cor:two_colored_one_label}, $G$ would be a complete $2$-edge-colored
  permutation graph.  It remains to show that $G$ contains a rainbow
  triangle.

  We consider here the two cases: either Case~(i) for none of the strong
  prime modules $M$ in $G$ the quotient graph $G[M]/\Mmax(M)$ has at least
  three colors or Case~(ii) there exists a strong prime module $M$ in $G$
  such that $G[M]/\Mmax(M)$ has at least three colors.

  Let us first consider Case (i). Then, for every strong prime module $M$
  of $G$ on at least three vertices the quotient graph $G[M]/\Mmax(M)$ is
  $2$-edge-colored (as otherwise $M$ would be series).  By assumption, for all
  $i\in \inv{1}{k}$ the monochromatic subgraph $G_{|i}$ is a simple
  permutation graph.  Hence, we can apply
  \Cref{lem:characterization-2nd-case} to conclude that $G$ is a complete
  edge-colored permutation graph; a contradiction.  Therefore, Case~(i)
  cannot occur and we are left with Case~(ii).

  For Case (ii), let $M$ be a strong prime module such that $G[M]/\Mmax(M)$
  has at least three colors.  Note that this implies that $|M|\neq 1$.  By
  \Cref{lem:PrimeImpliesPrimitive}, $G[M]/\Mmax(M)$ is primitive. By
  \Cref{lem:prime_rainbow}, $G[M]/\Mmax(M)$ contains a rainbow triangle
  induced by some vertices $M',M'',M'''\in \Mmax(M)$.  By definition, there are vertices
  $x\in M'$, $y\in M''$, and $z\in M'''$ that induce the rainbow triangle $\tri_{xyz}$ in
  $G[M]$ and thus, in $G$.

  In summary, we have shown that if the complete edge-colored graph~$G$ is
  not a complete edge-colored permutation graph, then there exists a
  monochromatic subgraph of $G$ that is not a simple permutation graph or
  $G$ contains a rainbow triangle, which completes the proof.
\end{proof}

\subsection{Proof of \Cref{thm:characterization_all_k_PG}}
\label{section:mainproof}

Theorem~\ref{thm:characterization_all_k_PG} is a direct consequence of
  the results described in this section:

\begin{proof}[Proof of \Cref{thm:characterization_all_k_PG}]
  \Cref{cor:i_iii-b} and \Cref{pro:i_iii} provide the equivalence between Items~(i) and~(ii).
  \Cref{lem:colored_perm_iff2} and \Cref{lem:colored_perm_iff2-b} provide
  the equivalence between Items~(i) and~(iii).  Moreover, by
  \Cref{thm:k_colored_permgraph_iff} and
  \Cref{thm:k_colored_permgraph_iff-b}, Items~(i) and~(iv) are equivalent.
  Finally, by \Cref{pro:characterization} and
  \Cref{pro:characterization-unlabeled}, Items~(i) and~(v) are equivalent.
\end{proof}

\section{Recognition of Complete Edge-Colored Permutation Graphs}
\label{sec:recognition}

In this section we show that complete edge-colored permutation graphs can be
recognized in $\Mc{O}(|V|^2)$-time. Moreover, we also show that (in the 
affirmative case) the labeling~$\ell$ and the
permutations~$\pi_1,\dots,\pi_k$ such that $(G,\ell)$ is a complete
$k$-edge-colored permutation graph of $\pi_1,\dots,\pi_k$ can be constructed in
polynomial time.

Simple permutation graphs $G=(V,E)$ can be recognized in $\Mc{O}(|V|+|E|)$-time \cite{Crespelle2010,Mcconnell1999}. These algorithms also construct a
corresponding permutation~$\pi$ and labeling~$\ell$ of $G$.

Let us now consider a complete $k$-edge-colored graph~$G$.  As the number of
colors~$k$ is always bounded by the number of edges of~$G$, we immediately
obtain a polynomial-time algorithm to recognize complete edge-colored
permutation graphs (cf.\ \Cref{thm:characterization_all_k_PG}~(v)):
\begin{enumerate}
\item[1)] For all $i\in\inv{1}{k}$, we check whether the monochromatic
  subgraph~$G_{|i}$ is a simple permutation graph using the algorithms
  described in \cite{Crespelle2010,Mcconnell1999}.
\item[2)] In addition, we verify that $G$ does not contain a rainbow
  triangle.
\end{enumerate}
Such an algorithm allows us to verify that $G$ is a complete edge-colored
permutation graph or not. However, we are also interested in computing a
labeling~$\ell$ of $G$ and the corresponding underlying
permutations~$\pi_1,\dots,\pi_k$ such that $(G,\ell)$ is a complete
edge-colored permutation graph of $\pi_1,\dots,\pi_k$.

The proofs in \Cref{sec:strongM}, however, are  constructive and thus
provide a recognition algorithm that explicitly computes  the required
labeling~$\ell$ as well as  the underlying permutations of a complete
edge-colored permutation graph. More precisely, we may use the
  following procedure:
\begin{enumerate}
\item[1)] Compute the modular decomposition of~$G$ and, thus, obtain all
					strong modules of~$G$.  If there is a strong prime
  module~$M$ of~$G$ with $|M|\geq 3$ whose quotient graph $G[M]/\Mmax(M)$
  contains more than $2$ colors, then $G$ cannot be a complete edge-colored
  permutation graph (cf.\ \Cref{thm:characterization_all_k_PG}~(iv)).  In
  this case, the algorithm stops and returns \emph{false}.  Otherwise, each
  of these quotient graphs on at least three vertices is $2$-edge-colored and we
  can proceed with Step (2).
\item[2)] For all strong prime modules~$M$ on at least three vertices,
  check whether one of the two monochromatic subgraphs of $G[M]/\Mmax(M)$
  is a simple permutation graph and, thus, obtain a labeling~$\ell^M$ of
  the quotient graph $G[M]/\Mmax(M)$. 
\item[3)] For all remaining strong modules~$M$ (those that are either
  prime modules with $|M|=1$ or series modules) choose an arbitrarily
  labeling~$\ell^M$ of the quotient graph $G[M]/\Mmax(M)$.
\item[4)] Construct $\precdot$ and $\ell_{\precdot}$ as specified in
  \Cref{def:order1}.	
\item[5)] For all $i\in \inv{1}{k}$, construct $\llcurly_i$ as specified in
  \Cref{def:order2} and permutation~$\pi_i$ as defined in the proof of
  \Cref{lem:colored_perm_iff2-b}.
\end{enumerate}
This algorithm correctly determines whether $G$ is a complete edge-colored
permutation graph or not (it essentially checks Item~(iv) in
\Cref{thm:characterization_all_k_PG}) and correctly determines the labeling
$\ell_{\precdot}$ of $G$ (cf.\ \Cref{lem:order1}) and the corresponding
permutations $\pi_1,\dots,\pi_k$ (cf.\ proof of
\Cref{lem:colored_perm_iff2-b}).

%

\begin{theorem}\label{thm:alg}
  Let $G=(V,E)$ be a complete $k$-edge-colored graph.  Then, it can be verified in
  $\Mc{O}(|V|^2)$-time whether $G$ is a complete edge-colored permutation graph
  or not.  In the affirmative case, a labeling~$\ell$ and
  permutations~$\pi_1,\dots,\pi_k$ such that $(G,\ell)$ is a complete
  edge-colored permutation graph for $\pi_1,\dots,\pi_k$ can be constructed
  in $\Mc{O}(|V|^2)$-time.
\end{theorem}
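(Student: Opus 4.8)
The plan is to implement the five-step procedure described above, preceded by a constant-time preliminary test, and to bound its running time; its correctness has already been established. Indeed, by \Cref{thm:characterization_all_k_PG}~(iv) together with \Cref{lem:PrimeImpliesPrimitive}, \Cref{lem:prime_rainbow} and \Cref{cor:two_colored_one_label}, $G$ is a complete edge-colored permutation graph if and only if, for every strong prime module $M$ with $|M|\geq 3$, the (primitive) quotient $G[M]/\Mmax(M)$ is $2$-edge-colored and one of its two monochromatic subgraphs is a simple permutation graph, which is exactly what Steps~(1)--(2) verify. Moreover, \Cref{lem:order1}, \Cref{lem:order2} and the constructive proof of \Cref{lem:colored_perm_iff2-b} guarantee that, once Steps~(1)--(2) succeed, Steps~(3)--(5) return a labeling $\ell_{\precdot}$ and permutations $\pi_1,\dots,\pi_k$ witnessing that $(G,\ell_{\precdot})$ is a complete edge-colored permutation graph of $\pi_1,\dots,\pi_k$. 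As the preliminary test I would immediately return \emph{false} when $k\geq|V|$: by \Cref{pro:characterization} a complete edge-colored permutation graph carries a Gallai coloring, and a Gallai coloring of $K_{|V|}$ uses at most $|V|-1$ colors~\cite{Erdoes1973}; hence from here on $k=\Mc{O}(|V|)$. Reading the input and building the adjacency list of every monochromatic subgraph $G_{|i}$ costs $\Mc{O}(|V|^2)$ once.

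For Step~(1) I would compute the modular decomposition tree $T_G$ together with all strong modules of $G$; for complete edge-colored graphs (symmetric $2$-structures) this is possible in $\Mc{O}(|V|^2)$ time~\cite{EHMS:94,Habib2010}, and $|\Mods[str]{G}|=\Mc{O}(|V|)$. The inner vertices of $T_G$ are the series modules and the prime modules on at least three vertices (singleton prime modules being leaves). Writing $n_M\coloneqq|\Mmax(M)|$ for the number of children of an inner vertex $M$, the sum $\sum_{M} n_M$ equals the number of edges of $T_G$ and is therefore $\Mc{O}(|V|)$, so $\sum_{M} n_M^2=\Mc{O}(|V|^2)$. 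For every strong prime module $M$ with $|M|\geq 3$, choosing one representative vertex per child module gives a set $N\subseteq M$ with $|N|=n_M$ and, by \Cref{rem:subgraph}, $G[N]\simeq G[M]/\Mmax(M)$; extracting $G[N]$ and counting its colors costs $\Mc{O}(n_M^2)$. If some such quotient uses three or more colors we return \emph{false} (by \Cref{lem:PrimeImpliesPrimitive} and \Cref{lem:prime_rainbow} it then contains a rainbow triangle); otherwise it is $2$-edge-colored, and in Step~(2) the linear-time permutation-graph recognition of~\cite{Crespelle2010,Mcconnell1999} is run on its two monochromatic subgraphs, costing $\Mc{O}(n_M^2)$ and, in the affirmative case, returning a labeling $\ell^M$ of the quotient; if both tests fail we return \emph{false}. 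Summing over all strong prime modules bounds Steps~(1)--(2) by $\Mc{O}\big(\sum_M n_M^2\big)=\Mc{O}(|V|^2)$.

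For the series modules Step~(3) picks an arbitrary labeling $\ell^M$ of the quotient, which is legitimate since that quotient is the complete $1$-edge-colored graph $K_{n_M}$, a simple permutation graph of $\ol{\iota}$ for \emph{every} labeling (\Cref{cor:indepentetSet_iota}); this takes $\Mc{O}(|V|)$ in total. For Step~(4) I would avoid evaluating $\precdot$ pair by pair: bucket-sort the children of each inner vertex of $T_G$ by their $\ell^M$-value ($\Mc{O}(|V|)$ overall) and run one depth-first traversal of $T_G$ visiting children in that order; by \Cref{def:order1} the leaves are then encountered exactly in $\precdot$-order, so labeling them $1,2,\dots,|V|$ in that order realizes $\ell_{\precdot}$ in $\Mc{O}(|V|)$ time. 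For Step~(5) the decisive observation is that, by \Cref{lem:sameColor}, the quotient edge $\{M^{u,v}_u,M^{u,v}_v\}$ carries precisely the color $c(\{u,v\})$, so with $\ell\coloneqq\ell_{\precdot}$ \Cref{def:order2} simplifies to ``$u\llcurly_i v$ iff $\ell(u)>\ell(v)$ and $\{u,v\}\in E_i$, and $v\llcurly_i u$ otherwise'' --- the quotient graphs are not needed here at all. Since Steps~(1)--(2) have already succeeded, $(G,\ell_{\precdot})$ is a complete edge-colored permutation graph by \Cref{lem:colored_perm_iff2-b}, hence each $(G_{|i},\ell_{\precdot})$ is a simple permutation graph and $\llcurly_i$ is exactly the order in which $\pi_i$ lists $V$; therefore, processing the vertices in increasing $\ell$-order while maintaining the current $\llcurly_i$-order in a doubly linked list, the $E_i$-neighbours of the next vertex $v$ already present in the list are exactly its last $d$ entries, where $d$ is the number of $E_i$-neighbours $w$ of $v$ with $\ell(w)<\ell(v)$, so we may splice $v$ in front of them in $\Mc{O}(1+\deg_{G_{|i}}(v))$ time and read off $\pi_i$. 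This yields $\pi_i$ in $\Mc{O}(|V|+|E_i|)$ time, hence all $k$ permutations in $\Mc{O}(k|V|+|E|)=\Mc{O}(|V|^2)$ time. In total every step runs in $\Mc{O}(|V|^2)$, which also matches the input size.

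The step I expect to be the main obstacle is the time analysis, not the logic. One has to make the two amortized/summation arguments work: that $\sum_M n_M^2=\Mc{O}(|V|^2)$ keeps the extraction and testing of the prime quotient graphs within budget even when a single quotient spans $\Theta(|V|)$ vertices; and that each $\pi_i$ is obtainable in $\Mc{O}(|V|+|E_i|)$ --- rather than $\Mc{O}(|V|\log|V|)$ --- time, for which identifying $\llcurly_i$ with the realizing order of the permutation graph $G_{|i}$ (so that the already-placed $E_i$-neighbours of a vertex form a suffix of the current list) together with linked-list splicing is the key device. Everything else follows routinely from \Cref{def:order1}, \Cref{def:order2}, \Cref{lem:order1}, \Cref{lem:order2} and \Cref{lem:colored_perm_iff2-b}.
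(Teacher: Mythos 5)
Your proposal is correct, and the algorithm itself is exactly the five-step procedure the paper fixes before the theorem; the difference lies in the complexity analysis, where you are finer (and in one place more careful) than the paper. For Steps~(1)--(2) both arguments reduce to bounding $\sum_M n_M^2$ by $\Mc{O}(|V|^2)$ via the size of the decomposition tree, so there is no real divergence there. For Step~(4) the paper simply evaluates $\precdot$ on all $\Mc{O}(|V|^2)$ pairs using the precomputed modules $M^{x,y}$, whereas your DFS with children bucket-sorted by $\ell^M$ realizes $\ell_{\precdot}$ in $\Mc{O}(|V|)$ --- a genuine (if unnecessary) improvement. The most substantial difference is Step~(5): the paper constructs each $\llcurly_i$ by querying the precomputed quotient edges pairwise and asserts an $\Mc{O}(|V|^2)$ bound for the re-ordering, which, done naively per color, costs $\Omega(|V|^2)$ or $\Omega(|V|\log|V|)$ for \emph{each} of the $k$ colors and hence needs a bound on $k$ to stay within budget; your version supplies exactly the missing ingredients --- the preliminary rejection when $k\geq|V|$ via the Gallai-coloring color bound, the observation (via \Cref{lem:sameColor}) that $\llcurly_i$ depends only on $\ell_{\precdot}$ and membership in $E_i$, and the transitivity of $\llcurly_i$ (\Cref{lem:order2}) to justify that the already-placed $E_i$-neighbours of a vertex form a suffix of the current list, giving $\Mc{O}(|V|+|E_i|)$ per color and $\Mc{O}(k|V|+|E|)=\Mc{O}(|V|^2)$ in total. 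So what your route buys is a tighter and arguably more airtight accounting of Steps~(4)--(5); what the paper's route buys is brevity, since it charges everything against the $\Mc{O}(|V|^2)$ table of precomputed $M^{x,y}$, $M^{x,y}_x$, $M^{x,y}_y$.
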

\begin{proof}
  The correctness of the algorithm follows from the discussion above.
  Thus, let us examine its running time.

  In Step (1) we have to compute the modular decomposition tree~$T$ of~$G$,
  which can be done in $\Mc{O}(|V|^2)$-time~\cite{Hellmuth:16a,EHMS:94}.  While
  computing~$T$ we can also store the information which module is
  associated to which vertex in~$T$ within the same time complexity.
  At the same time, we can also store the information which
  inclusion-minimal strong module $M^{x,y}$ of $G$ contains $x$ and $y$ as
  well as the child modules $M_x^{x,y}$ and $M_y^{x,y}$ that contain
  $x$ and $y$, respectively, for all $x,y\in V$ in $\Mc{O}(|V|^2)$-time.
 
  We then have to compute the quotient graph $G[M]/\Mmax(M)$ for all strong
  modules $M$. The quotient graph of an individual strong module $M$ can be
  computed as follows: The module $M$ is associated with a vertex $u$ in~$T$ 
  that has children $u_1,\dots,u_r$, $r\geq 2$. Each child $u_i$ is
  associated with a strong module $M_i$ and hence,
  $\Mmax(M) = \{M_1,\dots,M_r\}$.  We now take from each module $M_i$ one
  vertex $v_i$ of $G$, which can be done in $r$ constant time steps. We
  finally determine the colored edges $\{v_i,v_j\}$, $1\leq i<j\leq r$
  which can be done in $r(r-1)/2$ constant time steps.  Note,
  $r = \deg_T(u)-1$ and thus, the quotient graph of a strong
  module $M$ associated with $u$ in $T$ can be constructed in
  $\deg_T(u)-1 + (\deg_T(u)-1)(\deg_T(u)-2)/2$ steps, each of which
    requires only a constant number of operations.

  To compute the quotient graphs of all modules we simply process each vertex in
  $T$ and its children, and construct the quotient graphs as outlined above and
  obtain
  \begin{equation} \label{eq:sum}
    \begin{split}
      \sum_{u\in V(T)} & (\deg_T(u)-1 + (\deg_T(u)-1)(\deg_T(u)-2)/2)
       \leq \sum_{u\in V(T)}  \deg_T(u) + \deg_T(u)^2 \\
       &\leq \sum_{u\in V(T)} \deg_T(u) + |L(T)| \sum_{u\in V(T)} \deg_T(u) 
       =(1+|L(T)|) \sum_{u\in V(T)} \deg_T(u)\\
      & = 2 (1+|L(T)|) |E(T)|
      = 2 (1+|L(T)|) (|V(T)|-1) \in \Mc{O}(|V|^2),\\
    \end{split} 
  \end{equation}	
  where $L(T)$ denotes the leaves of~$T$.
  In the last step we make use of~\cite[Lemma 1]{hellmuth2015phylogenomics}
  that states that $V(T)\in \Mc{O}(|L(T)|)$ and the fact that $L(T)=V$.

  We can now proceed to examine the time complexity of Step~(2).  For each
  strong prime module~$M$ (associated with vertex $u$ in $T$) on at least three vertices we need to check 
  whether one of the two monochromatic subgraphs $H$ of $G[M]/\Mmax(M)$ is
  a simple permutation graph. Let $r=\deg(u)-1$. Hence, $|V(H)|=r$ and
  $|E(H)|\leq r(r-1)/2$.  Now, we can apply an $\Mc{O}(|V(H)| + |E(H)|)$-time
  algorithm as provided \eg\ in ~\cite{Crespelle2010,Mcconnell1999} to
  check if $H$ is a simple permutation graph or not. In the affirmative
  case, we also obtain a labeling~$\ell^M$ of $H$ and, thus, a labeling of $G[M]/\Mmax(M)$.  
  By analogous arguments as in \Cref{eq:sum}, we can observe that applying these algorithms for each strong prime 
  module~$M$ with $|M|\geq 3$ can be done in $\Mc{O}(|V|^2)$-time.
		
  In Step (3), we choose for all remaining strong modules~$M$ (associated
  with some vertex $u$ in $T$) an arbitrarily labeling~$\ell^M$ of the quotient
  graph $G[M]/\Mmax(M)$, which can be done in $\deg(u)-1$ constant time
  steps. Again making use of $V(T)\in \Mc{O}(|L(T)|)$ and $L(T)=V$ we
    obtain
  \begin{equation*}
    \sum_{u\in V(T)} \deg_T(u)-1 \leq \sum_{u\in V(T)} \deg_T(u) = 2|E(T)| =
    2(|V(T)|-1)\in \Mc{O}(|V|).
  \end{equation*}
  
  In Step (4) we use the precomputed $M^{x,y}$ to construct
  $\precdot$ and $\ell_{\precdot}$ as specified in \Cref{def:order1}. This
  task takes $\Mc{O}(|V|^2)$-time.

  Finally, in Step (5), we construct $\llcurly_i$ as specified in
  \Cref{def:order2} for all $i\in \inv{1}{k}$. This can be done in
  $\Mc{O}(|V|^2)$-time, since for each pair $\{x,y\}$ with $x,y\in V$ we can
  check in constant time whether the edge $\{M^{x,y}_x,M^{x,y}_y\}$ has
  color~$i$ in $G[M^{x,y}]/\Mmax(M^{x,y})$ (since $M^{x,y}_x$, $M^{x,y}_y$, and
  $G[M^{x,y}]/\Mmax(M^{x,y})$ have been precomputed).  The
  permutation~$\pi_i$ as defined in the proof of
  \Cref{lem:colored_perm_iff2-b} simply refers to a re-ordering of the
  vertices in $V$. Therefore, it can be performed in $\Mc{O}(|V|^2)$-time.

  Hence, the overall running time is $\Mc{O}(|V|^2)$.
\end{proof}

\section{Connections between Complete Edge-Colored Permutation Graphs and other Graph Classes}
\label{sec:graph_classes}

In this section, we show the relationship between complete edge-colored
permutation graphs and other graph classes.  The first simple observation
is that complete $k$-edge-colored permutation graphs do not contain rainbow
triangles (cf.\ \Cref{lem:rainbow_nonprime2}).  This is precisely the
definition of Gallai colorings
\cite{Gyarfas2004,Koerner1992,Gyarfas2010}. We therefore have
\begin{corollary}
  \label{coro:perm_has_gallai}
  The coloring of a complete $k$-edge-colored permutation graph is a Gallai
  coloring with $k$ colors.
\end{corollary}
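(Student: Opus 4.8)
The plan is to derive this immediately from \Cref{lem:rainbow_nonprime2}. Recall that a \emph{Gallai coloring} (with $k$ colors) of a complete graph is, by definition, a surjective edge-coloring $c\colon E\to\inv{1}{k}$ such that the resulting edge-colored $K_{|V|}$ contains no rainbow triangle. So all that needs to be checked is that a complete $k$-edge-colored permutation graph is a complete graph (true by definition) equipped with a $k$-edge-coloring (again true by definition) that avoids rainbow triangles.

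First I would let $G=(V,E_1,\dots,E_k)$ be a complete $k$-edge-colored permutation graph together with a labeling $\ell$ and permutations $\pi_1,\dots,\pi_k$ witnessing this. Then I would invoke \Cref{lem:rainbow_nonprime2}, whose contrapositive says precisely that a complete edge-colored graph containing a rainbow triangle cannot be a complete edge-colored permutation graph; hence $G$ contains no rainbow triangle. Since $G$ is complete and $c$ is surjective onto $\inv{1}{k}$, this is exactly the statement that $c$ is a Gallai coloring with $k$ colors.

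There is no real obstacle here: the work was already done in \Cref{lem:rainbow_nonprime2}, and this corollary merely records the observation in the language of Gallai colorings. (Optionally one could also note that, by \Cref{thm:characterization_all_k_PG}~(v), the absence of rainbow triangles is in fact part of the characterization of the class, which makes the conclusion even more transparent.) Thus the proof is a one-line appeal to the earlier lemma.

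\begin{proof}
  Let $G=(V,E_1,\dots,E_k)$ be a complete $k$-edge-colored permutation
  graph. By definition, $G$ is a complete graph and its edge-coloring
  $c\colon E\to\inv{1}{k}$ is surjective. By \Cref{lem:rainbow_nonprime2},
  $G$ does not contain a rainbow triangle. Hence $c$ is, by definition, a
  Gallai coloring with $k$ colors.
\end{proof}
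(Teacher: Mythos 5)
Your proof is correct and matches the paper's own argument: the paper likewise derives this corollary as an immediate consequence of \Cref{lem:rainbow_nonprime2} (no rainbow triangles) together with the definition of a Gallai coloring. Nothing further is needed.
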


We continue by showing that the class of complete edge-colored permutation graphs is
a generalization of so-called symbolic ultrametrics \cite{Bocker1998},
which have important applications in computational biology~\cite{hellmuth2013orthology,hellmuth2015phylogenomics,hellmuth2016sequence}.
They are defined as follows:
\begin{definition}[Symbolic Ultrametric]
  Let $X$ be a non-empty finite set, let $M=\inv{1}{k}$ for some
  $k\in\mathbb{N}$. A surjective map
  $\delta\colon X \times X \rightarrow M$ is a \emph{symbolic
    ultrametric} if it satisfies the following conditions:
  \begin{align*}
    &\text{(U1)}\quad \delta(x,y)=\delta(y,x) \text{ for all } x,y \in X\text{; }\\[1ex]
    &\text{(U2)}\quad |\{\delta(x,y),\delta(x,z),\delta(y,z)\}| \leq 2 \text{ for all } x,y,z \in X\text{; and}\\[1ex]
    &\text{(U3)}\quad \text{there exists no subset } \{x,y,u,v\} \in \tbinom{X}{4} \text{ such that }\\
    &\hphantom{\text{(U3)}}\quad  \delta(x,y)=\delta(y,u)=\delta(u,v)\neq\delta(v,y)=\delta(x,v)=\delta(x,u)\text{.}
  \end{align*}
\end{definition}
The set $M$ is often used to represent so-called evolutionary events~\cite{hellmuth2013orthology} and can be replaced by any finite set of
symbols or the set $\inv{1}{k}$ as done here. A complete edge-colored graph
$G_{\delta}$ with vertex set~$X$ can readily be obtained from~$\delta$, by
putting the color $\delta(x,y)=\delta(y,x)$ on the edge $\{x,y\}$ for all
distinct $x,y\in X$.  Since $\delta$ is surjective, we obtain the complete
$k$-edge-colored graph $G_{\delta}=(X,E_1,\dots,E_{k})$ as the \emph{graph
  representation of $\delta$}.

Several characterizations of symbolic ultrametrics are known~\cite{Bocker1998,hellmuth2013orthology}, including one that relates them to cographs, \ie, graphs that do not contain induced paths on four
  vertices~\cite{CORNEIL1981163}.
\begin{proposition}[\cite{hellmuth2013orthology}, Proposition 3]
  \label{prop:hellmuth2}
  A complete edge-colored graph is the graph representation of a symbolic
  ultrametric if and only if it does not contain a rainbow triangle and
  each monochromatic subgraph is a cograph.
\end{proposition}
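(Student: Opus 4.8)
The plan is to work entirely through the bijective correspondence between a complete edge-colored graph $G=(X,E_1,\dots,E_k)$ with coloring $c$ and the symmetric surjective map $\delta_G\colon X\times X\to\inv{1}{k}$ given by $\delta_G(x,y):=c(\{x,y\})$ for distinct $x,y\in X$ (the value on the diagonal may be fixed arbitrarily, as it affects none of (U1)--(U3); the case $|X|\le 1$ is trivial). Under this correspondence $G_{\delta_G}=G$, condition (U1) holds automatically because $G$ is undirected, and condition (U2) is, word for word, the statement that $G$ contains no rainbow triangle (cf.\ \Cref{def:rainbowT}). Hence it suffices to prove that, \emph{assuming} $G$ has no rainbow triangle, condition (U3) holds if and only if every monochromatic subgraph $G_{|i}$ is $P_4$-free, i.e.\ a cograph.

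For the direction ``$G=G_\delta$ for a symbolic ultrametric $\Rightarrow$ each $G_{|i}$ is a cograph'', I would argue by contradiction. Suppose some $G_{|i}$ contains an induced path $a-b-c-d$ on four vertices. Then $c(\{a,b\})=c(\{b,c\})=c(\{c,d\})=i$, while $c(\{a,c\}),c(\{b,d\}),c(\{a,d\})$ are each different from $i$. Applying the absence of rainbow triangles to the triangle on $\{a,c,d\}$, which already carries color $i$ on the edge $\{c,d\}$, forces $c(\{a,c\})=c(\{a,d\})$; applying it to the triangle on $\{a,b,d\}$ forces $c(\{b,d\})=c(\{a,d\})$. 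Therefore $c(\{a,c\})=c(\{a,d\})=c(\{b,d\})=:j$ with $j\neq i$. But then the four distinct vertices $a,b,c,d$ realize exactly the configuration forbidden by (U3), with $(x,y,u,v)=(a,b,c,d)$: indeed $\delta(a,b)=\delta(b,c)=\delta(c,d)=i\neq j=\delta(d,b)=\delta(a,d)=\delta(a,c)$. This contradicts that $\delta$ is a symbolic ultrametric, so every $G_{|i}$ is a cograph.

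For the converse, ``$G$ has no rainbow triangle and each $G_{|i}$ is a cograph $\Rightarrow$ $\delta_G$ is a symbolic ultrametric'', I would verify (U1)--(U3) for $\delta_G$. Conditions (U1) and (U2) hold by the observations of the first paragraph, so only (U3) remains. If (U3) failed there would be four distinct vertices $x,y,u,v$ and colors $i\neq j$ with $\delta_G(x,y)=\delta_G(y,u)=\delta_G(u,v)=i$ and $\delta_G(v,y)=\delta_G(x,v)=\delta_G(x,u)=j$. Then within $\{x,y,u,v\}$ the subgraph $G_{|i}$ contains precisely the three edges $\{x,y\},\{y,u\},\{u,v\}$, since the remaining three pairs have color $j\neq i$; that is, $x-y-u-v$ is an induced $P_4$ in $G_{|i}$, contradicting that $G_{|i}$ is a cograph. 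Hence (U3) holds, $\delta_G$ is a symbolic ultrametric, and because the edge-coloring of $G$ is surjective we get $G=G_{\delta_G}$, so $G$ is the graph representation of a symbolic ultrametric.

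I expect the only step that is not pure bookkeeping to be the forward direction, and specifically the observation that a monochromatic induced $P_4$ together with the no-rainbow-triangle hypothesis forces the three non-path pairs $\{a,c\},\{a,d\},\{b,d\}$ to receive one and the same color $j\neq i$; once that is in hand, identifying the resulting color pattern with the index pattern of (U3) is immediate. This is, in essence, the argument given in \cite{hellmuth2013orthology}.
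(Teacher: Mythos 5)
The paper does not prove this proposition at all---it is imported verbatim as Proposition~3 of \cite{hellmuth2013orthology}---so there is no in-paper argument to compare against. Your proof is correct and complete: the identification of (U2) with the absence of rainbow triangles, the use of the two triangles $\{a,c,d\}$ and $\{a,b,d\}$ to force a single second color $j$ on the three non-path pairs of a monochromatic induced $P_4$ (yielding exactly the (U3) pattern), and the converse reading of a (U3) violation as an induced $P_4$ in $G_{|i}$ all check out, and this is indeed essentially the argument of the cited reference.
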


Not every simple permutation graph is a cograph. For example, a path on
four vertices represents the permutation graph of~$(3,1,4,2)$ but not
a cograph.  
Intriguingly, all cographs are simple permutation
graphs~\cite{Bose1998}.  This, together with \Cref{prop:hellmuth2} and
\Cref{thm:characterization_all_k_PG}~(v) immediately implies
\begin{corollary}
  \label{coro:allSymbU_perm}
  The graph representation $G_{\delta}$ of a symbolic ultrametric
  $\delta$ is a complete edge-colored permutation graph.
\end{corollary}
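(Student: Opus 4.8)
The plan is to chain together two results already in hand: the cograph characterization of symbolic ultrametrics (\Cref{prop:hellmuth2}) and the monochromatic‑subgraph characterization of complete edge‑colored permutation graphs (\Cref{thm:characterization_all_k_PG}~(v)). The only extra ingredient is the classical fact that every cograph is a simple permutation graph~\cite{Bose1998}.

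First I would start from a symbolic ultrametric $\delta$ on a finite set $X$ and pass to its graph representation $G_\delta$, which by construction is a complete edge‑colored graph (complete since $\delta$ is defined on all of $\binom{X}{2}$, edge‑colored since $\delta$ is surjective). Applying \Cref{prop:hellmuth2} in the ``only if'' direction immediately yields two facts: $G_\delta$ contains no rainbow triangle, and every monochromatic subgraph $G_{\delta|i}$ of $G_\delta$ is a cograph.

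Next I would invoke~\cite{Bose1998}: since each $G_{\delta|i}$ is a cograph, each $G_{\delta|i}$ is a simple permutation graph. Now both hypotheses of \Cref{thm:characterization_all_k_PG}~(v) are satisfied — every monochromatic subgraph of $G_\delta$ is a simple permutation graph, and $G_\delta$ has no rainbow triangle — so the equivalence of Items~(i) and~(v) in \Cref{thm:characterization_all_k_PG} gives that $G_\delta$ is a complete edge‑colored permutation graph, which is the claim.

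There is no genuine obstacle here; the argument is a two‑step implication chain once the external cograph/permutation‑graph fact is cited. The only things worth a sentence of care are that the notion of ``monochromatic subgraph'' in \Cref{prop:hellmuth2} agrees with that in \Cref{thm:characterization_all_k_PG} (it does, both being $G_{|i}=(X,E_i)$), and the degenerate case $|X|=1$, where $G_\delta\simeq K_1$ is a complete edge‑colored permutation graph by convention; both are immediate from the definitions.
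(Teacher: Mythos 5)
Your argument is correct and is precisely the paper's own proof: the paper derives this corollary by combining \Cref{prop:hellmuth2}, the fact from~\cite{Bose1998} that every cograph is a simple permutation graph, and \Cref{thm:characterization_all_k_PG}~(v), exactly as you do. The two side remarks you add (agreement of the notion of monochromatic subgraph, and the $K_1$ case) are fine but not needed.
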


Separable permutations can be represented by a so-called separating
  tree that reflects the structure of the permutation \cite{Bose1998} and
  can be characterized in terms of so-called permutation patterns.  For our
  purposes, the following result is of interest:
\begin{proposition}[\cite{Bose1998}]
  \label{pro:cograph_permutation2}
  Let $G$ be a simple permutation graph of $\pi$. Then, $\pi$ is
  separable if and only if $G$ is a cograph.
\end{proposition}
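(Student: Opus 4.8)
The plan is to reduce \Cref{pro:cograph_permutation2} to two ``forbidden substructure'' characterizations together with a dictionary between them. On the graph side, $G$ is a cograph precisely when it contains no induced path $P_4$ on four vertices (the characterization used here, cf.\ \cite{CORNEIL1981163}); on the permutation side, $\pi$ is separable precisely when it avoids both patterns $(2,4,1,3)$ and $(3,1,4,2)$ \cite{Bose1998}. The dictionary I would establish is a slight sharpening of \Cref{lem:simplePerm-hereditary}: if $(G,\ell)$ is a simple permutation graph of $\pi$ and $W\subseteq V$, then $G[W]$ is again a simple permutation graph, and --- after relabeling by ranking the values in $\ell(W)$ --- the permutation it realizes is exactly the pattern of $\pi$ read off at the positions $\pi^{-1}(\ell(W))$. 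This is immediate from the defining equivalence $\{u,v\}\in E\Leftrightarrow\ell(u)>\ell(v)$ and $\pi^{-1}(\ell(u))<\pi^{-1}(\ell(v))$, since ranking a finite set of integers preserves all ``$<$'' comparisons; indeed this is the bookkeeping already performed in the proof of \Cref{pro:i_iii}, specialized to a single color. (Without loss of generality $\ell$ is the identity on $\{1,\dots,|V|\}$, since replacing each $v$ by $\ell(v)$ yields an isomorphic graph.)

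With the dictionary in hand, both implications are short and argued by contraposition. If $G$ has an induced $P_4$ on a four-vertex set $W$, then $G[W]$ is the simple permutation graph of some $\tau\in\Mc{P}_4$, and $\tau$ is a pattern of $\pi$. The number of edges of a simple permutation graph equals the number of inversions of its permutation, so $\tau$ has exactly three inversions; inspecting the six permutations in $\Mc{P}_4$ with three inversions shows that only $(2,4,1,3)$ and $(3,1,4,2)$ have $P_4$ as permutation graph (the remaining four yield $K_3\cup K_1$ or $K_{1,3}$). Hence $\pi$ contains $(2,4,1,3)$ or $(3,1,4,2)$ and is not separable. Conversely, if $\pi$ contains $(2,4,1,3)$ or $(3,1,4,2)$ as a pattern, then $G$ contains, as an induced subgraph, the simple permutation graph of that pattern; recall that the simple permutation graph of $(3,1,4,2)$ is a path on four vertices, and since $\ol{(2,4,1,3)}=(3,1,4,2)$, \Cref{prop:perm_complement} shows that the simple permutation graph of $(2,4,1,3)$ is $\ol{P_4}\cong P_4$ as well. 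In either case $G$ has an induced $P_4$ and is not a cograph. Combining the two directions gives the claimed equivalence.

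The only genuinely non-routine points are (i) formulating the ``pattern $\leftrightarrow$ induced subgraph'' sharpening of \Cref{lem:simplePerm-hereditary} so that it really identifies \emph{which} permutation is realized, and (ii) the finite case check picking out $(2,4,1,3)$ and $(3,1,4,2)$. The latter can be compressed with \Cref{prop:perm_complement}: the six three-inversion permutations in $\Mc{P}_4$ split into three reversal-pairs, within each pair the two permutation graphs are complements of one another, so one computes a single graph per pair and uses $\ol{P_4}\cong P_4$. An essentially equivalent pattern-free alternative, perhaps preferable for a fully self-contained argument, proceeds by structural induction on $|V|$: one checks that $G_{\alpha\oplus\beta}$ is the disjoint union and $G_{\alpha\ominus\beta}$ the join of $G_\alpha,G_\beta$ (where $\alpha\oplus\beta$, resp.\ $\alpha\ominus\beta$, places $\beta$ after and above, resp.\ below, $\alpha$), then that $G_\pi$ (resp.\ $\ol{G_\pi}=G_{\ol{\pi}}$, by \Cref{prop:perm_complement}) being disconnected forces $\pi$ to be a non-trivial direct (resp.\ skew) sum, and finally uses the recursive descriptions of cographs and of separable permutations. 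The crux there is the ``disconnected $\Rightarrow$ direct sum'' step, which I would get by showing each connected component of $G_\pi$, read as a set of values, is an interval: if $a<b<c$ with $a,c$ in a component $C$ but $b\notin C$, then the edge joining the part of $C$ below $b$ to the part above it --- which exists as $C$ is connected --- would have its larger endpoint at the earlier position, while $b$ being in a different component from both endpoints forces the opposite order, a contradiction; with the components value-intervals, the lowest one occupies the lowest positions and the $\oplus$-split follows.
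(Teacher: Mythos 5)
Your proof is correct. Note, though, that the paper does not prove this proposition at all --- it is imported verbatim from \cite{Bose1998} --- so there is no in-paper argument to compare against; what you have written is a sound reconstruction of the standard proof. Your main route is valid provided one grants the pattern-avoidance characterization of separable permutations (avoidance of $(2,4,1,3)$ and $(3,1,4,2)$), which is itself a theorem of comparable depth from the same source; the ``induced subgraph $\leftrightarrow$ pattern'' dictionary is exactly the single-color specialization of the bookkeeping in \Cref{pro:i_iii}, the edge/inversion count is right under the paper's definition of permutation graph, and your finite check of the six three-inversion permutations in $\mathcal{P}_4$ (two give $P_4$, the rest $K_3\cup K_1$ or $K_{1,3}$) is accurate, with the $\overline{P_4}\cong P_4$ shortcut via \Cref{prop:perm_complement} working as claimed. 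Your alternative inductive argument is the more self-contained one and is the proof I would prefer here, since it avoids leaning on a second nontrivial cited fact: the identifications of $\oplus$ with disjoint union and $\ominus$ with join are immediate from the definition, the ``disconnected $\Rightarrow$ components are value-intervals $\Rightarrow$ direct-sum split'' step is argued correctly (the crossing edge $\{x,y\}$ with $x<b<y$ forces $b$ to be adjacent to $x$ or to $y$ no matter where $b$ sits), and the complement case is handled by \Cref{prop:perm_complement} together with $\overline{\gamma\oplus\delta}=\overline{\delta}\ominus\overline{\gamma}$. Either version would serve as a complete proof of the cited statement.
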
	
It is easy to verify that every induced subgraph of a cograph is again a
cograph. This, together with \Cref{pro:cograph_permutation2},
\Cref{thm:characterization_all_k_PG}, and \Cref{prop:hellmuth2} implies
\begin{corollary}
  \label{coro:symb_is_perm}
  The following statements are equivalent:
  \begin{itemize}
  \item[i)] A complete edge-colored graph $G$ is the graph representation of a
    symbolic ultrametric.
  \item[ii)] $G$ does not contain a rainbow triangle and every monochromatic
    subgraph of $G$ is the simple permutation graph of a separable
    permutation.
  \end{itemize}
  Moreover, if $G$ is the graph representation of a symbolic ultrametric,
  then every monochromatic subgraph of each induced subgraph is the simple
  permutation graph of a separable permutation.
\end{corollary}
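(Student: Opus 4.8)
The plan is to chain \Cref{prop:hellmuth2}, which characterises graph representations of symbolic ultrametrics as exactly the complete edge-colored graphs with no rainbow triangle and all monochromatic subgraphs cographs, together with the two facts recalled just above the statement: every cograph is a simple permutation graph \cite{Bose1998}, and by \Cref{pro:cograph_permutation2} a permutation is separable precisely when its simple permutation graph is a cograph. For the implication ``(i) $\Rightarrow$ (ii)'', I would take $G=(V,E_1,\dots,E_k)$ to be the graph representation of a symbolic ultrametric; by \Cref{prop:hellmuth2} it has no rainbow triangle and each monochromatic subgraph $G_{|i}$ is a cograph, hence a simple permutation graph of some $\pi_i\in \Mc{P}_{|V|}$, and since $G_{|i}$ is a cograph, \Cref{pro:cograph_permutation2} forces $\pi_i$ to be separable. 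For ``(ii) $\Rightarrow$ (i)'', if each $G_{|i}$ is the simple permutation graph of a separable permutation then \Cref{pro:cograph_permutation2} makes each $G_{|i}$ a cograph, and together with the hypothesis that $G$ contains no rainbow triangle, \Cref{prop:hellmuth2} yields that $G$ is the graph representation of a symbolic ultrametric. (The case $G\simeq K_1$ is trivial.)

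For the ``moreover'' part I would use that cographs are closed under taking induced subgraphs. Fix $W\subseteq V$ and a colour $i$, and observe that the $i$-th monochromatic subgraph of $G[W]$ is exactly the induced subgraph $(G_{|i})[W]$. Since $G$ is the graph representation of a symbolic ultrametric, $G_{|i}$ is a cograph by \Cref{prop:hellmuth2}; hence $(G_{|i})[W]$ is again a cograph, hence a simple permutation graph \cite{Bose1998} every realising permutation of which is separable by \Cref{pro:cograph_permutation2}. Equivalently, and this is where \Cref{thm:characterization_all_k_PG} enters, one can note that $G$ is a complete edge-colored permutation graph by \Cref{coro:allSymbU_perm}, so by the hereditary characterisation \Cref{thm:characterization_all_k_PG}~(ii) every induced subgraph $G[W]$ is again a complete edge-colored permutation graph, and the separability of the monochromatic pieces is then supplied by the cograph-hereditariness argument plus \Cref{pro:cograph_permutation2}. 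Either route works; the cograph one is the most economical.

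I do not expect a genuine obstacle here: the statement is a routine composition of \Cref{prop:hellmuth2}, \Cref{pro:cograph_permutation2}, and the cograph/permutation-graph facts from \cite{Bose1998}. The only points that deserve a word of care are (a) that the phrase ``$G_{|i}$ is the simple permutation graph of a separable permutation'' packages two assertions — that $G_{|i}$ is a permutation graph at all, supplied by the implication cograph $\Rightarrow$ simple permutation graph, and that \emph{some} realising permutation is separable, supplied by \Cref{pro:cograph_permutation2} (which in fact renders every realising permutation separable once $G_{|i}$ is a cograph, so no ambiguity remains); and (b) the trivial but necessary identification of the $i$-th monochromatic subgraph of $G[W]$ with the induced subgraph $(G_{|i})[W]$ of the $i$-th monochromatic subgraph of $G$.
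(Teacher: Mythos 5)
Your proposal is correct and follows essentially the same route as the paper, which likewise derives the corollary by combining the hereditariness of cographs, \Cref{prop:hellmuth2}, \Cref{pro:cograph_permutation2}, and the cograph-to-permutation-graph fact from the cited reference (the paper merely states this chain of implications without spelling out the details you supply). Your observations (a) and (b) address exactly the two small points one must check, so nothing is missing.
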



\section{Summary and Outlook}
\label{sec:conclusion}

In this paper, we characterized complete edge-colored permutation graphs,
that is, complete edge-colored graphs that can be decomposed into
permutation graphs. These graphs form a hereditary class, \ie, every
induced subgraph of a complete edge-colored permutation graph is again a
complete edge-colored permutation graph, possibly with fewer colors.
Moreover, we showed that complete edge-colored permutation graphs are
equivalent to those complete edge-colored graphs, \ie, symmetric
2-structures, that contain no rainbow triangle and whose monochromatic
subgraphs are simple permutation graphs.  Further characterizations in
terms of the modular decomposition of complete edge-colored graphs are
provided.  In particular, complete edge-colored permutation graphs are
characterized by the structure of the quotient graphs of their strong
(prime) modules.  Moreover, we showed that complete edge-colored
permutation graphs $G=(V,E)$ can be recognized in $\Mc{O}(|V|^2)$-time and
that both the labeling and the underlying permutations can be constructed
within the same time complexity.

As a by-product, we observed that the edge-coloring of a complete
edge-colored permutation graph is always a Gallai coloring.  In addition,
we have shown the close relationship of edge-colored permutation graphs to
symbolic ultrametrics, separable permutations, and cographs. In particular,
the class of edge-colored graphs representing symbolic ultrametrics is
strictly contained in the class of complete edge-colored permutation
graphs.
	
There are many open problems that are of immediate interest for future
research. The first obvious open problem is to generalize the results
presented here to $k$-edge-colored graphs that are not necessarily complete,
\eg, the graph illustrated in \Cref{fig:2vs3}~(a).  Second, we may consider
the converse problem of the one studied in this paper: Consider simple
unlabeled permutation graphs $G_1,\dots,G_k$. The question arises whether
there is a complete edge-colored permutation graph~$G$ such that every
monochromatic subgraph $G_{|i}$ of~$G$ is isomorphic to $G_i$,
$1\leq i\leq k$.  Third, one may also ask whether one can add colored edges
to a given (not necessarily complete) edge-colored graph such that the
resulting graph is a complete edge-colored permutation graph.

\subsection*{Acknowledgements}
  This work was funded in part by the German Research Foundation
  (DFG STA 850/49-1) and Federal Ministry of Education and Research (BMBF 031L0164C).

\bibliographystyle{plain}
\bibliography{literature}

\begin{thebibliography}{10}

\bibitem{Aho1976}
Alfred~V Aho, John~E Hopcroft, and Jeffrey~D Ullman.
\newblock On finding lowest common ancestors in trees.
\newblock {\em SIAM Journal on Computing}, 5(1):115--132, 1976.

\bibitem{Aldous1986}
D~Aldous and P~Diaconis.
\newblock Shuffling cards and stopping times.
\newblock {\em The American Mathematical Monthly}, 93(5):333--348, 1986.

\bibitem{BFR:72}
K.~A. Baker, P.~C. Fishburn, and F.~S. Roberts.
\newblock Partial orders of dimension~$2$.
\newblock {\em Networks}, 2:11--28, 1972.

\bibitem{ball2007colored}
Richard~N Ball, Ale{\v{s}} Pultr, and Petr Vojt{\v{e}}chovsk{\`y}.
\newblock Colored graphs without colorful cycles.
\newblock {\em Combinatorica}, 27(4):407--427, 2007.

\bibitem{Balogh2018}
J{\'o}zsef Balogh and Lina Li.
\newblock The typical structure of {G}allai colorings and their extremal
  graphs.
\newblock {\em SIAM Journal on Discrete Mathematics}, 33(4):2416--2443, 2018.

\bibitem{Bastos2018}
Josefran de~Oliveira Bastos, Fabricio~S Benevides, and Jie Han.
\newblock The number of {G}allai $k$-colorings of complete graphs.
\newblock {\em Journal of Combinatorial Theory, Series B}, 2019.
\newblock (in press).

\bibitem{Bastos2019}
Josefran de~Oliveira Bastos, Fabr{\'\i}cio~Siqueira Benevides,
  Guilherme~Oliveira Mota, and Ignasi Sau.
\newblock Counting {G}allai 3-colorings of complete graphs.
\newblock {\em Discrete Mathematics}, 342(9):2618--2631, 2019.

\bibitem{Bergeron2008}
Anne Bergeron, C{\'e}dric Chauve, Fabien De~Montgolfier, and Mathieu Raffinot.
\newblock Computing common intervals of $k$ permutations, with applications to
  modular decomposition of graphs.
\newblock {\em SIAM Journal on Discrete Mathematics}, 22(3):1022--1039, 2008.

\bibitem{Bhatia2018}
S~Bhatia, P~Feij{\~a}o, and A~R Francis.
\newblock Position and content paradigms in genome rearrangements: the wild and
  crazy world of permutations in genomics.
\newblock {\em Bulletin of Mathematical Biology}, 80(12):3227--3246, 2018.

\bibitem{Bocker1998}
Sebastian B{\"o}cker and Andreas~WM Dress.
\newblock Recovering symbolically dated, rooted trees from symbolic
  ultrametrics.
\newblock {\em Advances in Mathematics}, 138(1):105--125, 1998.

\bibitem{Bodlaender1989}
H~L Bodlaender.
\newblock Achromatic number is {NP}-complete for cographs and interval graphs.
\newblock {\em Information Processing Letters}, 31(3):135--138, 1989.

\bibitem{Bona2016}
M~B{\'o}na.
\newblock {\em Combinatorics of permutations}.
\newblock Chapman and Hall/CRC, 2016.

\bibitem{Bose1998}
Prosenjit Bose, Jonathan~F Buss, and Anna Lubiw.
\newblock Pattern matching for permutations.
\newblock {\em Information Processing Letters}, 65:277--283, 1998.

\bibitem{Brandstadt1985}
A~Brandst{\"a}dt and D~Kratsch.
\newblock On the restriction of some {NP}-complete graph problems to
  permutation graphs.
\newblock In {\em Proc. Fundamentals of Computation Theory (FCT 1985)}, pages
  53--62. Springer, 1985.

\bibitem{Brandstadt1999}
Andreas Brandst{\"a}dt, Van~Bang Le, and Jeremy~P Spinrad.
\newblock {\em Graph Classes: A Survey}.
\newblock Society for Industrial and Applied Mathematics, Philadelphia, PA,
  USA, 1999.

\bibitem{Cameron1997}
Kathie Cameron and Jack Edmonds.
\newblock Lambda composition.
\newblock {\em Journal of Graph Theory}, 26(1):9--16, 1997.

\bibitem{Capelle2002}
Christian Capelle, Michel Habib, and Fabien Montgolfier.
\newblock Graph decompositions and factorizing permutations.
\newblock {\em Discrete Mathematics and Theoretical Computer Science},
  5:55--70, 2002.

\bibitem{CORNEIL1981163}
D~G Corneil, H~Lerchs, and L~S Burlingham.
\newblock Complement reducible graphs.
\newblock {\em Discrete Applied Mathematics}, 3(3):163--174, 1981.

\bibitem{Crespelle2010}
C~Crespelle and C~Paul.
\newblock Fully dynamic algorithm for recognition and modular decomposition of
  permutation graphs.
\newblock {\em Algorithmica}, 58(2):405--432, 2010.

\bibitem{Deogun1994}
J~S Deogun and G~Steiner.
\newblock Polynomial algorithms for {Hamiltonian} cycle in cocomparability
  graphs.
\newblock {\em SIAM Journal on Computing}, 23(3):520--552, 1994.

\bibitem{Dushnik1941}
Ben Dushnik and Edwin~W Miller.
\newblock Partially ordered sets.
\newblock {\em American Journal of Mathematics}, 63:600--610, 1941.

\bibitem{ER1:90}
A~Ehrenfeucht and G~Rozenberg.
\newblock Theory of 2-structures, part {I}: Clans, basic subclasses, and
  morphisms.
\newblock {\em Theoretical Computer Science}, 70:277--303, 1990.

\bibitem{ER2:90}
A~Ehrenfeucht and G~Rozenberg.
\newblock Theory of 2-structures, part {II}: Representation through labeled
  tree families.
\newblock {\em Theoretical Computer Science}, 70:305--342, 1990.

\bibitem{EHMS:94}
Andrzej Ehrenfeucht, Harold~N. Gabow, Ross~M. Mcconnell, and Stephen~J.
  Sullivan.
\newblock An {$\mathcal{O}$($n^2$)} divide-and-conquer algorithm for the prime
  tree decomposition of {T}wo-{S}tructures and modular decomposition of graphs.
\newblock {\em Journal of Algorithms}, 16:283--294, 1994.

\bibitem{ehrenfeucht1999theory}
Andrzej Ehrenfeucht, Tero Harju, and Grzegorz Rozenberg.
\newblock {\em The theory of 2-structures: A framework for decomposition and
  transformation of graphs}.
\newblock World Scientific, Singapore, 1999.

\bibitem{ehrenfeucht1990partial}
Andrzej Ehrenfeucht and Grzegorz Rozenberg.
\newblock Partial (set) 2-structures.
\newblock {\em Acta Informatica}, 27(4):343--368, 1990.

\bibitem{EHPR:96}
J.~Engelfriet, T.~Harju, A.~Proskurowski, and G~Rozenberg.
\newblock Characterization and complexity of uniformly nonprimitive labeled
  2-structures.
\newblock {\em Theoretical Computer Science}, 154:247--282, 1996.

\bibitem{Erdoes1973}
Paul Erd{\H{o}}s, M.~Simonovits, and V.~T. S\'os.
\newblock Anti-{R}amsey theorems.
\newblock In {\em In Infinite and finite sets Vol. II}, pages 633--643, 1973.

\bibitem{Fertin2009}
G~Fertin, A~Labarre, I~Rusu, E~Tannier, and S~Vialette.
\newblock {\em Combinatorics of genome rearrangements}.
\newblock MIT press, 2009.

\bibitem{Fujita2010}
Shinya Fujita, Colton Magnant, and Kenta Ozeki.
\newblock Rainbow generalizations of {R}amsey theory: a survey.
\newblock {\em Graphs and Combinatorics}, 26(1):1--30, 2010.

\bibitem{Gallai1967}
Tibor Gallai.
\newblock Transitiv orientierbare graphen.
\newblock {\em Acta Mathematica Hungarica}, 18(1-2):25--66, 1967.

\bibitem{Golovach2012}
P~A Golovach, P~Heggernes, and R~Mihai.
\newblock Edge search number of cographs.
\newblock {\em Discrete Applied Mathematics}, 160(6):734--743, 2012.

\bibitem{Golumbic2004}
Martin~Charles Golumbic.
\newblock {\em Algorithmic graph theory and perfect graphs}, volume~57.
\newblock Elsevier, 2004.

\bibitem{golumbic1983comparability}
Martin~Charles Golumbic, Doron Rotem, and Jorge Urrutia.
\newblock Comparability graphs and intersection graphs.
\newblock {\em Discrete Mathematics}, 43(1):37--46, 1983.

\bibitem{Gyarfas2011}
Andr{\'a}s Gy{\'a}rf{\'a}s.
\newblock Large monochromatic components in edge colorings of graphs: a survey.
\newblock In {\em Ramsey Theory}, pages 77--96. Springer, 2011.

\bibitem{Gyarfas2019}
Andr{\'a}s Gy{\'a}rf{\'a}s, D\"{o}m\"{o}t\"{o}r P\'{a}lv\"{o}lgyi, Bal{\'a}zs
  Patk{\'o}s, and Matthew Wales.
\newblock Distribution of colors in {G}allai colorings.
\newblock {\em European Journal of Combinatorics}, 86:103087, 2020.

\bibitem{gyarfas2010gallai}
Andr{\'a}s Gy{\'a}rf{\'a}s and G{\'a}bor~N S{\'a}rk{\"o}zy.
\newblock Gallai colorings of non-complete graphs.
\newblock {\em Discrete Mathematics}, 310(5):977--980, 2010.

\bibitem{Gyarfas2010}
Andr{\'a}s Gy{\'a}rf{\'a}s, G{\'a}bor~N S{\'a}rk{\"o}zy, Andr{\'a}s Seb{\H{o}},
  and Stanley Selkow.
\newblock Ramsey-type results for {G}allai colorings.
\newblock {\em Journal of Graph Theory}, 64(3):233--243, 2010.

\bibitem{Gyarfas2004}
Andr{\'a}s Gy{\'a}rf{\'a}s and G{\'a}bor Simony.
\newblock Edge colorings of complete graphs without tricolored triangles.
\newblock {\em Journal of Graph Theory}, 46(3):211--216, 2004.

\bibitem{Habib2010}
Michel Habib and Christophe Paul.
\newblock A survey of the algorithmic aspects of modular decomposition.
\newblock {\em Computer Science Review}, 4(1):41--59, 2010.

\bibitem{Harel1984}
Dov Harel and Robert~Endre Tarjan.
\newblock Fast algorithms for finding nearest common ancestors.
\newblock {\em SIAM Journal on Computing}, 13(2):338--355, 1984.

\bibitem{Hartmann2018}
Tom Hartmann, Martin Middendorf, and Matthias Bernt.
\newblock Genome rearrangement analysis: {C}ut and join genome rearrangements
  and gene cluster preserving approaches.
\newblock In {\em Comparative Genomics}, pages 261--289. Springer, 2018.

\bibitem{hellmuth2013orthology}
M~Hellmuth, M~Hernandez-Rosales, K~T Huber, V~Moulton, P~F Stadler, and
  N~Wieseke.
\newblock Orthology relations, symbolic ultrametrics, and cographs.
\newblock {\em Journal of Mathematical Biology}, 66(1-2):399--420, 2013.

\bibitem{Hellmuth:16a}
M.~Hellmuth, P.F. Stadler, and N.~Wieseke.
\newblock The mathematics of xenology: Di-cographs, symbolic ultrametrics,
  2-structures and tree-representable systems of binary relations.
\newblock {\em Journal of Mathematical Biology}, 75(1):199--237, 2017.

\bibitem{hellmuth2016sequence}
M~Hellmuth and N~Wieseke.
\newblock From sequence data including orthologs, paralogs, and xenologs to
  gene and species trees.
\newblock In {\em Evolutionary biology: Convergent evolution, evolution of
  complex traits, concepts and methods}, pages 373--392. Springer
  International, 2016.

\bibitem{hellmuth2018tree}
M~Hellmuth and N~Wieseke.
\newblock On tree representations of relations and graphs: symbolic
  ultrametrics and cograph edge decompositions.
\newblock {\em Journal of Combinatorial Optimization}, 36(2):591--616, 2018.

\bibitem{hellmuth2015phylogenomics}
M~Hellmuth, N~Wieseke, M~Lechner, H-P Lenhof, M~Middendorf, and P~F Stadler.
\newblock Phylogenomics with paralogs.
\newblock {\em Proceedings of the National Academy of Sciences},
  112(7):2058--2063, 2015.

\bibitem{Kano2008}
Mikio Kano and Xueliang Li.
\newblock Monochromatic and heterochromatic subgraphs in edge-colored graphs --
  a survey.
\newblock {\em Graphs and Combinatorics}, 24(4):237--263, 2008.

\bibitem{Kim:1990}
Hanklin Kim.
\newblock Finding a maximum independent set in a permutation graph.
\newblock {\em Information Processing Letters}, 36:19--23, 1990.

\bibitem{Koh2007}
Youngmee Koh and Sangwook Ree.
\newblock Connected permutation graphs.
\newblock {\em Discrete Mathematics}, 307(21):2628--2635, 2007.

\bibitem{Koerner1992}
J{\'a}nos K{\"o}rner, G{\'a}bor Simonyi, and Zsolt Tuza.
\newblock Perfect couples of graphs.
\newblock {\em Combinatorica}, 12(2):179--192, 1992.

\bibitem{lafond2014orthology}
M~Lafond and N~El-Mabrouk.
\newblock Orthology and paralogy constraints: satisfiability and consistency.
\newblock {\em BMC Genomics}, 15(6):S12, 2014.

\bibitem{Mcconnell1999}
R~M McConnell and J~P Spinrad.
\newblock Modular decomposition and transitive orientation.
\newblock {\em Discrete Mathematics}, 201(1-3):189--241, 1999.

\bibitem{MOHRING:1984}
R.~F. M{\"o}hring and F.~J. Radermacher.
\newblock Substitution decomposition for discrete structures and connections
  with combinatorial optimization.
\newblock In {\em Algebraic and Combinatorial Methods in Operations Research},
  volume~95 of {\em North-Holland Mathematics Studies}, pages 257--355.
  North-Holland, 1984.

\bibitem{mohring1985algorithmic}
Rolf~H M{\"o}hring.
\newblock Algorithmic aspects of the substitution decomposition in optimization
  over relations, set systems and boolean functions.
\newblock {\em Annals of Operations Research}, 4(1):195--225, 1985.

\bibitem{Pnueli:1971}
A~Pnueli and S.~Lempel, A.~Even.
\newblock Transitive orientation of graphs and identification of permutation
  graphs.
\newblock {\em Canadian Journal of Mathematics}, 23:160--175, 1971.

\bibitem{semplephylogenetics}
C~Semple and M~Steel.
\newblock {\em Phylogenetics}.
\newblock Oxford University Press, 2003.

\bibitem{Wagner1984}
K~Wagner.
\newblock Monotonic coverings of finite sets.
\newblock {\em Journal of Information Processing and Cybernetics},
  20(12):633--639, 1984.

\end{thebibliography}
\end{document}